\newlength{\defbaselineskip}
\newcommand{\setlinespacing}[1]%
           {\setlength{\baselineskip}{#1 \defbaselineskip}}
\numberwithin{equation}{section}
\newtheorem{thm}{Theorem}[section]
\newtheorem{prop}[thm]{Proposition}
\newtheorem{lem}[thm]{Lemma}
\newtheorem{cor}[thm]{Corollary}
\theoremstyle{definition}
\newtheorem{defn}[thm]{Definition}
\theoremstyle{remark}
\newtheorem{rem}[thm]{Remark}
\numberwithin{equation}{section}
\begin{document}

\title[From resolvent estimates to unique continuation]
{From resolvent estimates to unique continuation for the Schr\"odinger equation}

\author{Ihyeok Seo}

\subjclass[2010]{Primary: 47A10, 35B60; Secondary: 35Q40}
\keywords{Resolvent estimates, unique continuation, Schr\"odinger equations}

\address{Department of Mathematics, Sungkyunkwan University, Suwon 440-746, Republic of Korea}
\email{ihseo@skku.edu}

\maketitle

\begin{abstract}
In this paper we develop an abstract method to handle the problem of unique continuation
for the Schr\"odinger equation $(i\partial_t+\Delta)u=V(x)u$.
In general the problem is to find a class of potentials $V$ which allows the unique continuation.
The key point of our work is to make a direct link between the problem and
the weighted $L^2$ resolvent estimates
$\|(-\Delta-z)^{-1}f\|_{L^2(|V|)}\leq C\|f\|_{L^2(|V|^{-1})}$.
We carry out it in an abstract way, and thereby we do not need to deal with each of the potential classes.
To do so, we will make use of limiting absorption principle and Kato $H$-smoothing theorem in spectral theory,
and employ some tools from harmonic analysis.
Once the resolvent estimate is set up for a potential class, from our abstract theory
the unique continuation would follow from the same potential class.
Also, it turns out that there can be no dented surface on the boundary
of the maximal open zero set of the solution $u$.
In this regard, another main issue for us is to know which class of potentials allows the resolvent estimate.
We establish such a new class which contains previously known ones,
and will also apply it to the problem of well-posedness for the equation.
\end{abstract}

%\tableofcontents
%-------------------------------------------------------------------------------------------------------------------

\section{Introduction and main results}

It is well known that an analytic function has a property of unique continuation which says that
it cannot vanish in any non-empty open subset of its domain of definition without being identically zero.
The property results from expanding the function in power series
because it would vanish with all its derivatives at some point in the subset.
At this point, we can relate the property to a solution of
the Cauchy-Riemann equation $\overline{\partial}u=0$ in $\mathbb{R}^2$,
since it should be complex analytic.
Similarly, the same property holds for solutions to the Laplace equation $\Delta u=0$ in $\mathbb{R}^n$, $n\geq2$,
since they are harmonic functions that are still real analytic.
Now it can be asked whether the property is shared by other partial differential equations
whose solutions are not necessarily analytic, or even smooth.
It would be an interesting problem to prove the property for such equation.

This paper is mainly concerned with the problem for solutions of the Schr\"odinger equation
\begin{equation}\label{sch}
i\partial_t\Psi(x,t)=(-\Delta+V(x))\Psi(x,t)
\end{equation}
which describes how the wave function $\Psi$
of a non-relativistic quantum mechanical system with a potential $V$ evolves over time.
In principle, the unique continuation can be viewed as one of the non-localization properties
of the wave function which are a fruitful issue in certain interpretations of quantum mechanics.

Let us first put \eqref{sch} in a reasonably general setting which is a differential inequality of the form
\begin{equation}\label{schineq}
|(i\partial_t+\Delta)u(x,t)|\leq|V(x)u(x,t)|,
\end{equation}
where $u:\mathbb{R}^{n+1}\rightarrow\mathbb{C}$ is a solution that is a function satisfying \eqref{schineq}
and $V:\mathbb{R}^n\rightarrow\mathbb{C}$ is a potential.
From now on, we call \eqref{schineq} \textit{Schr\"odinger inequality} for convenience.
Let us also make precise what we will mean by unique continuation property:

\begin{defn}
Given a partial differential equation or an inequality in $\mathbb{R}^n$,
we say that it has the unique continuation property from a non-empty open subset $\Omega\subset\mathbb{R}^n$
if its solution cannot vanish in $\Omega$ without being identically zero.
\end{defn}

The general question to ask is for which class of potentials does the unique continuation hold.
It has been studied for decades with a half-space $\Omega$ in $\mathbb{R}^{n+1}$.
Apart from the case \eqref{schineq}, the case of time-dependent potentials $V(x,t)$
was mainly studied in \cite{KS,S2,LS,S}.
The key ingredients in these works are so-called Carleman estimates\footnote{The Carleman method, which derives unique continuation from Carleman estimate, originated from the pioneering work of Carleman \cite{Ca} for elliptic equations.}
for the operator $i\partial_t+\Delta$.
The first one due to Kenig and Sogge \cite{KS} is the following estimate
\begin{equation}\label{KS-Carl}
\big\|e^{\beta\langle(x,t),\nu\rangle}u\big\|_{L_{t,x}^{\frac{2(n+2)}{n}}(\mathbb{R}^{n+1})}\leq
C\big\|e^{\beta\langle(x,t),\nu\rangle}(i\partial_t+\Delta)u\big\|_{L_{t,x}^{\frac{2(n+2)}{n+4}}(\mathbb{R}^{n+1})},
\end{equation}
where $\langle\text{ },\text{ }\rangle$ denotes the usual inner product on $\mathbb{R}^{n+1}$,
and the constant $C$ is independent of $\beta\in\mathbb{R}$ and $\nu\in\mathbb{R}^{n+1}$.
Note that the special case where $\beta=0$ becomes equivalent to
an estimate of Strichartz \cite{Str} for the inhomogeneous Schr\"odinger equation.
In this regard, the later developments \cite{IK,S2,LS} have been made to extend \eqref{KS-Carl}
to mixed norms $L_t^qL_x^r$ ($L_t^q(\mathbb{R};L_x^r(\mathbb{R}^n)$)
for which the inhomogeneous Strichartz estimate is known to hold (as in \cite{KT,Fo,V}).
This made it possible to obtain the unique continuation for potentials $V\in L_t^pL_x^s$
where $(p,s)$ lies in the scaling-critical range $2/p+n/s=2$ (see \cite{S2,LS}).
In a similar manner, these works were further extended in \cite{S} to Wiener amalgam norms
which may be of interest since they are not allowed to possess a scaling invariance (\cite{CN}).

Unfortunately the above-mentioned Carleman estimates
are no longer available in the time-independent case \eqref{schineq}.
The aim of this paper is to develop an abstract method to handle the unique continuation problem
for \eqref{schineq} and to exhibit a few useful applications of the method.
One of our main contributions is to convert the problem
to that of obtaining the following resolvent estimate on weighted $L^2$ spaces:
\begin{equation}\label{resol}
\big\|(-\Delta-z)^{-1}f\big\|_{L^2(|V|)}\leq C(V)\|f\|_{L^2(|V|^{-1})},
\end{equation}
where $z\in\mathbb{C}\setminus\mathbb{R}$, and $C(V)$ is a suitable constant depending on the potential $V(x)$
but it should be independent of $z$.
At this point, it should be noted that our method focuses more on the goal of obtaining unique continuation
directly from resolvent estimates rather than Carleman estimates.

We shall carry out the project in a unified manner,
and thereby we do not need to deal with each of the potentials which allow \eqref{resol}.
Let us first make the following definition in which we denote $A\sim B$ to mean $CA\leq B\leq CA$ with unspecified constants $C>0$.

\begin{defn}
We say that a potential class $\mathcal{R}$ is of resolvent type if
there exists a suitable function $[\,\cdot\,]_{\mathcal{R}}$ depending only on $\mathcal{R}$ itself 
such that the resolvent estimate \eqref{resol} holds with bound $[V]_{\mathcal{R}}$.
%$C(V)\sim[V]_{\mathcal{R}}$ for all $V\in\mathcal{R}$.
\end{defn}

\begin{rem}
Note that if $|V_1|\leq|V_2|$ and \eqref{resol} holds for $V_2$,
then it also holds for $V_1$ with $C(V_1)=C(V_2)$.
In this case, we take $[V_1]_{\mathcal{R}}$ to be less than or equal to $[V_2]_{\mathcal{R}}$.
\end{rem}

Here and thereafter, we always use the letter $\mathcal{R}$ to mean a potential class of resolvent type.
Once a suitable class $\mathcal{R}$ is set up, from our abstract theory
the unique continuation for the Schr\"odinger inequality would follow from $V\in\mathcal{R}$.
At this point, the main issue for us is to know which class of potentials can be of resolvent type.
As we will see later, the scaling-critical Lebesgue class $L^{n/2}$, with $[\,\cdot\,]_{L^{n/2}}=\|\cdot\|_{L^{n/2}}$,
is one of such classes.
In fact we establish a new and wider class which contains $L^{n/2,\infty}$ and
even the Fefferman-Phong class $\mathcal{L}^{2,p}$ (see \eqref{feff}) for $p>(n-1)/2$, $n\geq3$.
In particular, when $n=3$ it also contains the global Kato and Rollnik classes (see \eqref{K}, \eqref{R}).
These will be discussed in detail in the next section as one of the cores of our work.

Before stating our results on unique continuation,
we need to set up more notation in order to be precise.
A weight\footnote{It is a locally integrable function
which is allowed to be zero or infinite only on a set of Lebesgue measure zero,
so $w^{-1}$ is also a weight if it is locally integrable.}
$w:\mathbb{R}^{n}\rightarrow[0,\infty]$ is said to be of Muckenhoupt $A_p(\mathbb{R}^{n})$ class, $1<p<\infty$, if
there is a constant $C_{A_p}$ such that
\begin{equation}\label{ap}
\sup_{Q\text{ cubes in }\mathbb{R}^{n}}
\bigg(\frac1{|Q|}\int_Qw(x)dx\bigg)\bigg(\frac1{|Q|}\int_Qw(x)^{-\frac1{p-1}}dx\bigg)^{p-1}<C_{A_p}.
\end{equation}
Note that $w\in A_2\,\,\Leftrightarrow\,\, w^{-1}\in A_2$.
Given $v\in\mathbb{R}^{n}$, one may write for $x\in\mathbb{R}^{n}$, $x=sv+\widetilde{x}$,
where $s\in\mathbb{R}$ and $\widetilde{x}$ is in some hyperplane $\mathcal{P}$
whose normal vector is $v$.
We shall denote by $w\in A_p(v)$ to mean that $w$ is in the $A_p$ class
in one-dimensional direction of the vector $v$
if the function $w_{\widetilde{x}}(s):=w(x)$ is in $A_p(\mathbb{R})$ with $C_{A_p}$
uniformly in almost every $\widetilde{x}\in\mathcal{P}$.
At first glance this notion could be more or less complicated.
But, by translation and rotation it can be reduced to the case
where $v=(0,...,0,1)\in\mathbb{R}^{n}$ and $\mathcal{P}=\mathbb{R}^{n-1}$.
In this case $w\in A_p(v)$ means that $w(x_1,...,x_{n-1},\cdot)\in A_p(\mathbb{R})$
in the $x_{n}$ variable uniformly in almost every $\widetilde{x}=(x_1,...,x_{n-1})\in\mathbb{R}^{n-1}$.
We point out that this one-dimensional $A_p$ condition is trivially satisfied
if $w$ is in a more restrictive $A_p(\mathbb{R}^{n})$ class defined over arbitrary rectangles
instead of cubes (see Lemma 2.2 in \cite{Ku}).
For $\nu=(\nu_1,...,\nu_n,\nu_{n+1})\in\mathbb{R}^{n+1}$,
we denote by $\nu^\prime$ the vector $(\nu_1,...,\nu_n)$ in $\mathbb{R}^n$.

Let us now state our unique continuation theorems for the Schr\"odinger inequality
\begin{equation}\label{schineq1}
|(i\partial_t+\Delta)u(x,t)|\leq|V(x)u(x,t)|
\end{equation}
where $V\in\mathcal{R}$.
There are two types of the theorems, global ones and local ones.
First, the following global theorem says that if the solution of \eqref{schineq1} is supported on one side of a hyperplane in $\mathbb{R}^{n+1}$,
then it must vanish on all of $\mathbb{R}^{n+1}$.
In other words, the unique continuation arises globally from a half-space in $\mathbb{R}^{n+1}$.

\begin{thm}\label{thm2}
Let $n\geq2$.
Assume that $u\in \mathcal{H}_{t}^{1}(\mathcal{L}_2)\cap \mathcal{H}_{x}^{2}(\mathcal{L}_2)$ is a solution of \eqref{schineq1} with $V\in\mathcal{R}$ and vanishes in a half space with a unit normal vector $\nu\in\mathbb{R}^{n+1}$.
Then it must be identically zero if $|V|\in A_2(\nu^\prime)$
and $[V]_{\mathcal{R}}<\varepsilon$ for a sufficiently small $\varepsilon>0$.
\end{thm}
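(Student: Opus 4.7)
\emph{Proof plan.} My plan is to use \eqref{resol} to derive a self-improving inequality $\|u\|_{L^2(|V|)}\le C\,[V]_{\mathcal R}\,\|u\|_{L^2(|V|)}$, so that the smallness $[V]_{\mathcal R}<\varepsilon$ will force $u=0$ on $\operatorname{supp}|V|$. The half-space vanishing is the structural input that lets me deploy \eqref{resol}, which is a bound at complex $z$, against a time evolution whose natural frequency parameter lies on $\mathbb R$. I first let $F:=(i\partial_t+\Delta)u\in L^2_{t,x}$ (valid by the regularity hypothesis on $u$) and note that \eqref{schineq1} gives the pointwise bound $|F|\le|V||u|$, hence $\|F\|_{L^2(|V|^{-1})}\le\|u\|_{L^2(|V|)}$. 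The task thus reduces to proving the reverse direction $\|u\|_{L^2(|V|)}\le C\,[V]_{\mathcal R}\,\|F\|_{L^2(|V|^{-1})}$.

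To exploit the support I would introduce $u_\epsilon(x,t):=e^{-\epsilon\nu\cdot(x,t)}u(x,t)$, which sits in $L^2$ uniformly in $\epsilon>0$ because $\nu\cdot(x,t)\ge 0$ on $\operatorname{supp}u$. A direct computation shows
\[
\bigl(i\partial_t+(\nabla+\epsilon\nu')^2+i\epsilon\nu_{n+1}\bigr)u_\epsilon \;=\; e^{-\epsilon\nu\cdot(x,t)}F,
\]
and the conjugation identity $(\nabla+\epsilon\nu')^2=e^{-\epsilon\nu'\cdot x}\Delta\,e^{\epsilon\nu'\cdot x}$ removes the first-order perturbation. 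Fourier transforming in $t$ then yields the resolvent identity
\[
(-\Delta-z)\bigl(e^{\epsilon\nu'\cdot x}\widehat{u_\epsilon}(\cdot,\tau)\bigr) \;=\; -\,e^{\epsilon\nu'\cdot x}\widehat{F_\epsilon}(\cdot,\tau), \qquad z=-\tau+i\epsilon\nu_{n+1},
\]
where $F_\epsilon:=e^{-\epsilon\nu\cdot(x,t)}F$. When $\nu_{n+1}\ne 0$ one has $z\in\mathbb C\setminus\mathbb R$, so \eqref{resol} applies pointwise in $\tau$ with bound $[V]_{\mathcal R}$; squaring, integrating in $\tau$, and applying Plancherel in $t\leftrightarrow\tau$ causes the exponential factors to collapse via $e^{2\epsilon\nu'\cdot x}\cdot e^{-2\epsilon\nu\cdot(x,t)}=e^{-2\epsilon\nu_{n+1}t}$, delivering
\[
\|e^{-\epsilon\nu_{n+1}t}u\|_{L^2(|V|)}\;\le\;[V]_{\mathcal R}\,\|e^{-\epsilon\nu_{n+1}t}F\|_{L^2(|V|^{-1})}\;\le\;[V]_{\mathcal R}\,\|e^{-\epsilon\nu_{n+1}t}u\|_{L^2(|V|)}.
\]
Smallness of $[V]_{\mathcal R}$ absorbs the right-hand side, and sending $\epsilon\downarrow 0$ produces $u\equiv 0$ on $\operatorname{supp}|V|$. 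On the complement $\{|V|=0\}$ the function $u$ satisfies the free Schr\"odinger equation together with the half-space vanishing, so standard unique continuation for the free equation extends $u\equiv 0$ to all of $\mathbb R^{n+1}$.

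The hard part will be the tangential case $\nu_{n+1}=0$: the shift above then produces a real $z$ and breaks down, and this is precisely where the $A_2(\nu')$ hypothesis becomes essential. Rotating so that $\nu'=e_n$, the spatial support condition $\widetilde u(\cdot,\tau)\equiv 0$ on $\{x_n<0\}$ makes $\widetilde u$ the boundary value in $x_n$ of a function analytic in the lower half-plane; a Paley--Wiener contour deformation in the variable dual to $x_n$ manufactures the needed imaginary part in the spectral parameter. The deformation inserts a Riesz/Hilbert-type projection along the $\nu'$ direction, and the one-dimensional $A_2$ assumption $|V|\in A_2(\nu')$ is exactly what guarantees this projection is bounded on the weighted space $L^2(|V|)$ uniformly in the deformation parameter. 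Controlling this uniformity, together with justifying the limiting-absorption passage invoked by the paper, is the principal technical obstacle; once in place, the loop closes exactly as in the generic case.
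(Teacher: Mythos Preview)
Your approach differs substantially from the paper's and contains a genuine gap. The paper does not close a single self-improving loop; it first proves, for $u\in C_0^\infty(\mathbb R^{n+1})$, the Carleman inequality
\[
\|e^{\beta\langle(x,t),\nu\rangle}u\|_{L^2(|V|)}\le C[V]_{\mathcal R}\,\|e^{\beta\langle(x,t),\nu\rangle}(i\partial_t+\Delta)u\|_{L^2(|V|^{-1})}
\]
uniformly in $\beta\in\mathbb R$. This is obtained by reducing to a multiplier bound for $(i\partial_t+\Delta+\partial_{x_n})^{-1}$ and decomposing dyadically in $\xi_n$; the hypothesis $|V|\in A_2(\nu')$ is exactly what makes the weighted Littlewood--Paley and Marcinkiewicz theorems available in that one direction. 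Unique continuation then follows by applying the Carleman estimate to a compactly supported regularization $(u\ast\psi_\varepsilon)\phi_R$, splitting the right-hand side into a thin strip $S_{-\sigma,0}(\nu)$ near the boundary (absorbed by smallness of $[V]_{\mathcal R}$) and the remainder (controlled by $e^{-\beta\sigma}$ times a fixed finite quantity), and sending $\beta\to\infty$.

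The gap in your argument is this: when $\nu'\ne 0$, your resolvent identity involves $e^{\epsilon\nu'\cdot x}\widehat{u_\epsilon}(\cdot,\tau)$ and $e^{\epsilon\nu'\cdot x}\widehat{F_\epsilon}(\cdot,\tau)$, and you must apply the resolvent estimate to them \emph{before} the Plancherel step that you say ``collapses'' the exponentials. But the half-space support only yields $e^{-\epsilon\nu_{n+1}t}\le e^{\epsilon\nu'\cdot x}$ on $\operatorname{supp}u$, which is no uniform bound; these functions need not lie in $L^2_x(|V|^{-1})$ or even in $\mathcal S'(\mathbb R^n_x)$. You therefore cannot justify that $e^{\epsilon\nu'\cdot x}\widehat{u_\epsilon}$ equals $R_0(z)$ applied to the right-hand side, nor that the right side is finite, and your loop degenerates to $\infty\le[V]_{\mathcal R}\cdot\infty$. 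The paper evades precisely this by working with compactly supported approximants (so all exponentials are harmless) and paying for the cutoff through the $\beta\to\infty$ mechanism; your $\epsilon\downarrow 0$ has no analogous leverage. Your scheme is clean only in the special case $\nu'=0$. For the tangential case $\nu_{n+1}=0$, your Paley--Wiener/contour-deformation sketch does not connect with the actual mechanism the paper uses (dyadic decomposition in $\xi_n$, weighted Littlewood--Paley, Marcinkiewicz on $A_2$), and the sentence ``$A_2$ bounds a Hilbert-type projection'' is not a proof of that step.
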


Let us give more details about the assumptions in the theorem.
First, $\mathcal{L}_2=L^2\cap L^2(|V|^{-1})$ is the solution space for which we have unique continuation,
and $\mathcal{H}_{t}^{1}(\mathcal{L}_2)$ denotes the space of functions whose derivatives up to order $1$,
with respect to the time variable $t$, belong to $\mathcal{L}_2$.
Similarly for $\mathcal{H}_{x}^{2}(\mathcal{L}_2)$.
It should be noted that the solution space is dense in $L^2$.
In fact, consider $D_n=\{x\in\mathbb{R}^n:|V|^{-1/2}\leq n\}$.
Then, for $f\in L^2$ the function $\chi_{D_n}f$ is contained in $\mathcal{L}_2$,
and $\chi_{D_n}f\rightarrow f$ as $n\rightarrow\infty$.
By the Lebesgue dominated convergence theorem, it follows now that $\chi_{D_n}f\rightarrow f$ in $L^2$.
So, $\mathcal{L}_2$ is dense in $L^2$.

Next, the smallness assumption such as $[V]_{\mathcal{R}}<\varepsilon$
is quite standard in the study of unique continuation.
In some cases of $\mathcal{R}$ where the assumption $|V|\in A_2(\nu^\prime)$
is superfluous, it can be replaced by a more local one
\begin{equation*}
\sup_{a\in\mathbb{R}}
\lim_{\delta\rightarrow0}[\chi_{S_{a,a+\delta}(\nu^\prime)}V]_{\mathcal{R}}<\varepsilon\quad(\text{if }\nu=(\nu^\prime,0))
\end{equation*}
which is trivially satisfied for the case $\mathcal{R}=L^{n/2}$.
Here $\chi_E$ denotes the characteristic function of a set $E$ in $\mathbb{R}^n$,
and for $a\in\mathbb{R}$ and a unit vector $v\in\mathbb{R}^n$,
$S_{a,a+\delta}(v)$ denotes a ``strip" in $\mathbb{R}^n$ with width $\delta>0$ given by
\begin{equation*}
S_{a,a+\delta}(v):=\{x\in\mathbb{R}^n:a<\langle x,v\rangle\leq a+\delta\}.
\end{equation*}
Also, the solution space can be extended to the whole space $L^2$ in such cases.
See Section \ref{sec6} for details.

\smallskip

Now we turn to another type of unique continuation which is more local in nature.
First we point out that there exists a smooth potential $V$
such that $(i\partial_t+\Delta)u=V(x,t)u$,
$0\in\text{supp}\,u$, and $u=0$ on $\{(x,t)\in\mathbb{R}^{n+1}:x_1<0\}$ in a neighborhood of the origin.
This result is from a particular case of Th\'{e}or\`{e}me 1.6 in \cite{LZ} due to Lascar and Zuily.
(See also \cite{Z}, p.\,127, Theorem 2.10.)
In this case the solution $u$ cannot vanish near the origin
across the hyperplane $\{(x,t)\in\mathbb{R}^{n+1}:x_1=0\}$ because $0\in\text{supp}\,u$.
This shows that the Schr\"odinger equation does not have, as a rule, a property of unique continuation
locally across a hyperplane in $\mathbb{R}^{n+1}$.
But, our result below says that the unique continuation
can arise locally across a hypersurface on a sphere in $\mathbb{R}^{n+1}$
into an interior region of the sphere.

\begin{thm}\label{thm6}
Let $n\geq2$ and let $S_r^n$ be a sphere in $\mathbb{R}^{n+1}$ with radius $r$.
Assume that $u\in\mathcal{H}_{t}^{1}(\mathcal{L}_2)\cap \mathcal{H}_{x}^{2}(\mathcal{L}_2)$
is a solution of \eqref{schineq1} with $V\in\mathcal{R}$
and vanishes on an exterior neighborhood of $S_r^n$ in a neighborhood of a point $p\in S_r^n$.
Let $\nu$ be the unit outward normal vector of $S_r^n$ at $p$.
Then it follows that $u\equiv0$ in a neighborhood of $p$,
if $|V|\in A_2(\nu^\prime)$ and $[V]_{\mathcal{R}}<\varepsilon$ for a sufficiently small $\varepsilon>0$.
\end{thm}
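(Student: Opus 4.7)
The plan is to reduce Theorem~\ref{thm6} to the global Theorem~\ref{thm2} by a localization near $p$ that exploits the curvature of $S_r^n$ and the time-independence of $V$. First, by translating so that $p=0$ and rotating in $\mathbb{R}^n$, I place $\nu$ in a canonical position; none of these operations affect $\mathcal{R}$, the hypothesis $|V|\in A_2(\nu')$, or the smallness of $[V]_\mathcal{R}$. Writing $y=s\nu+y'\in\mathbb{R}^{n+1}$ with $y'\perp\nu$, and $c:=p-r\nu$ for the center of $S_r^n$, the identity $|y-c|^2=(r+s)^2+|y'|^2$ shows that the entire half-space $\{s>0\}$ lies in the exterior of $S_r^n$. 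Combined with the hypothesis, this gives $u\equiv 0$ on the half-ball $\{s>0\}\cap B_\delta(p)$ for some $\delta>0$.

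Next I would introduce a cutoff $\chi\in C_c^\infty(\mathbb{R}^{n+1})$ with $\chi\equiv 1$ on $B_{\delta/2}(p)$ and $\operatorname{supp}\chi\subset B_\delta(p)$, and put $v:=\chi u$. Then $v\in\mathcal{H}_t^1(\mathcal{L}_2)\cap\mathcal{H}_x^2(\mathcal{L}_2)$; $v$ vanishes on the \emph{entire} half-space $\{s>0\}$ (trivially outside $B_\delta(p)$ and by the previous step inside); and a direct computation yields
\[
(i\partial_t+\Delta)v=Vv+R,\qquad R:=2\nabla\chi\cdot\nabla u+(\Delta\chi)u+i(\partial_t\chi)u,
\]
with $R$ supported in the annular shell $B_\delta(p)\setminus B_{\delta/2}(p)$. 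Were $R\equiv 0$, Theorem~\ref{thm2} applied to $v$ with the same normal $\nu$ would immediately give $v\equiv 0$ and hence $u\equiv 0$ on $B_{\delta/2}(p)$, which is exactly the desired conclusion.

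\textbf{Main obstacle.} The remainder $R$ is the heart of the matter: it is not of multiplicative form $\tilde V\,v$, and the annular shell supporting it necessarily straddles $S_r^n$, so $R$ picks up contributions from the interior of the sphere where $u$ need not vanish. The idea to overcome this is to replace the round cutoff by one adapted to the geometry of $S_r^n$, choosing $\chi$ so that $\operatorname{supp}\nabla\chi$ lies entirely in the exterior vanishing neighborhood of $u$ near $p$, where $u$ and $\nabla u$ are identically zero and thus $R\equiv 0$. Concretely, $\{\chi=1\}$ would be an ``inward thickened cap'' whose outer face is a surface parallel to $S_r^n$ on the exterior side near $p$, patched to a far-field compactly supported cutoff placed inside the exterior zero set. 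The time-independence of $V$ appears to be essential here, since otherwise the Lascar--Zuily construction recalled earlier would block the argument. Constructing such a cutoff so that $v$ is both compactly supported and makes $R$ vanish identically is the principal geometric step; once this is done, Theorem~\ref{thm2} closes the proof.
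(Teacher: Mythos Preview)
Your plan has a genuine geometric obstruction at the ``principal geometric step.'' You want a compactly supported cutoff $\chi$ with $\chi\equiv 1$ on a full neighborhood of $p$ and $\operatorname{supp}\nabla\chi$ contained in the exterior vanishing set of $u$. But that vanishing set lies entirely on the \emph{outside} of $S_r^n$ and only \emph{near} $p$, whereas a full neighborhood of $p$ must contain interior points of the sphere. The region $\{\chi=1\}$ therefore necessarily has boundary points in the interior of $S_r^n$, and at those points $\nabla\chi\neq 0$ while nothing is known about $u$. No ``far-field patching'' helps: away from $p$ (in the interior, or in the exterior far from $p$) we have no information on $u$ at all. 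So $R\equiv 0$ cannot be arranged, and the reduction to Theorem~\ref{thm2} does not close.

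The paper does not reduce to Theorem~\ref{thm2}; it applies the Carleman estimate of Proposition~\ref{prop} directly to $\widetilde u=(u\ast\psi_\varepsilon)\eta$ with a plain round cutoff $\eta$ on $B_\delta(p)$, and \emph{accepts} that the commutator terms on the shell $B_\delta\setminus B_{\delta/2}$ are nonzero. The key geometric observation---this is where the curvature of the sphere is actually used---is that for $\sigma>0$ small enough the slab $(B_\delta\setminus B_{\delta/2})\cap S_{-\sigma,0}(\nu)$ lies entirely in the exterior zero set, so $u$ vanishes there. Hence all cutoff errors are supported where $\langle(x,t),\nu\rangle\le -\sigma$, and the Carleman weight damps them by $e^{-\beta\sigma}$. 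One then splits the right-hand side, absorbs the piece on $B_{\delta/2}\cap S_{-\sigma,0}(\nu)$ into the left using $[V]_{\mathcal R}<\varepsilon$, and sends $\beta\to\infty$ to get $u=0$ on $B_{\delta/2}(p)\cap S_{-\sigma,0}(\nu)$; together with the exterior vanishing this covers a neighborhood of $p$. The moral: do not try to kill $R$---push it below the strip and let $\beta$ do the work.
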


In the same cases as above,
the solution space $\mathcal{L}_2$ can be extended to the whole space $L^2$,
and the smallness assumption can be replaced by a more local one
$$\lim_{\delta\rightarrow0}[\chi_{B_\delta(p^\prime)}V]_{\mathcal{R}}<\varepsilon,$$
where $B_\delta(p)$ denotes a ball centered at $p$ with radius $\delta$.

More interestingly, the above local theorem gives us a possibility of seeing the geometric shape of the zero set
of the solutions.
Roughly speaking, there can be no dented surface on the boundary of the maximal open zero set.
Let us first introduce the following definition.

\begin{defn}
Let $n\geq2$. Then we say that a non-empty open set $\Omega\varsubsetneq\mathbb{R}^{n}$ has a dent
at a point $p$ in the boundary $\partial\Omega$,
if there is a sphere $S_r^{n-1}$ in $\mathbb{R}^n$ such that $p\in S_r^{n-1}$ and
an exterior neighborhood of $S_r^{n-1}$ in a neighborhood of $p$ is contained in $\Omega$.
\end{defn}

The following corollary is now deduced from the above local unique continuation.

\begin{cor}
Let $\mathcal{M}$ be the maximal open set in $\mathbb{R}^{n+1}$ on which
the solution $u$ of \eqref{schineq1} with $V\in\mathcal{R}$ vanishes.
Then the boundary $\partial\mathcal{M}$ cannot have a dent
if the same assumptions as in Theorem \ref{thm6} hold.
\end{cor}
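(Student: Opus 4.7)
The plan is to argue by contradiction, reducing the statement directly to Theorem~\ref{thm6} together with the defining property of $\mathcal{M}$ as a \emph{maximal} open zero set.

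Suppose, for contradiction, that $\partial\mathcal{M}$ has a dent at some point $p\in\partial\mathcal{M}$. Unwinding the definition of dent, there exists a sphere $S_r^n\subset\mathbb{R}^{n+1}$ with $p\in S_r^n$ such that an exterior neighborhood of $S_r^n$, taken inside some neighborhood of $p$, is contained in $\mathcal{M}$. Since by hypothesis $u\equiv0$ on $\mathcal{M}$, this means exactly that $u$ vanishes on an exterior neighborhood of $S_r^n$ in a neighborhood of $p$; in other words, the geometric data furnished by the dent matches verbatim the hypothesis required by Theorem~\ref{thm6}. Together with the standing assumptions ($u\in\mathcal{H}_{t}^{1}(\mathcal{L}_2)\cap\mathcal{H}_{x}^{2}(\mathcal{L}_2)$, $V\in\mathcal{R}$, $|V|\in A_2(\nu^\prime)$ for the outward unit normal $\nu$ to $S_r^n$ at $p$, and $[V]_{\mathcal{R}}<\varepsilon$), Theorem~\ref{thm6} then yields an open neighborhood $U$ of $p$ on which $u\equiv0$.

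At this stage the contradiction is topological: $U$ is an open set on which $u$ vanishes, so by the maximality of $\mathcal{M}$ we must have $U\subset\mathcal{M}$. In particular $p\in U\subset\mathcal{M}$, but since $\mathcal{M}$ is open one has $\mathcal{M}\cap\partial\mathcal{M}=\emptyset$, contradicting $p\in\partial\mathcal{M}$. Hence no such dent can exist.

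There is no real obstacle to this argument once Theorem~\ref{thm6} is in hand; the only thing to be careful about is the verification that the set description produced by the definition of a dent is literally an ``exterior neighborhood of $S_r^n$ in a neighborhood of $p$'' in the sense of Theorem~\ref{thm6}, and that the directional Muckenhoupt and smallness conditions are understood to hold with respect to the normal $\nu$ supplied by the dent. The entire proof is therefore a short formal deduction, and will occupy only a few lines.
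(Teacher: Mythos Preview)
Your argument is correct and matches the paper's proof essentially line for line: assume a dent at $p\in\partial\mathcal{M}$, invoke Theorem~\ref{thm6} to get $u\equiv0$ near $p$, and conclude $p\in\mathcal{M}$, contradicting $p\in\partial\mathcal{M}$. The only difference is that you spell out the topological step $\mathcal{M}\cap\partial\mathcal{M}=\emptyset$ explicitly, whereas the paper simply says this ``contradicts the maximality of $\mathcal{M}$.''
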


\begin{proof}
Indeed, if there is a dent at $p\in\partial\mathcal{M}$,
then it is clear that $u$ vanishes on an exterior neighborhood of a sphere $S_r^{n}$ in a neighborhood of $p$.
From the above local unique continuation, $u$ must vanish in a neighborhood of $p$,
and so $p\in\mathcal{M}$. But this contradicts the maximality of $M$.
\end{proof}

Finally, let us sketch the organization of the paper.
In Section \ref{sec2} we establish a new function class of $V$ which allows the resolvent estimate \eqref{resol}.
Then, new results on the unique continuation are immediately obtained
from the abstract global and local theorems that are proved in Sections \ref{sec6} and \ref{sec7}, respectively.
The key ingredient in the proof is the following abstract Carleman estimate
\begin{equation}\label{Carle}
\big\|e^{\beta\langle(x,t),\nu\rangle}u\big\|_{L_{t,x}^2(|V|)}\leq
C[V]\big\|e^{\beta\langle(x,t),\nu\rangle}(i\partial_t+\Delta)u\big\|_{L_{t,x}^2(|V|^{-1})},
\end{equation}
where $C$ is a constant independent of $\beta\in\mathbb{R}$ and $\nu\in\mathbb{R}^{n+1}$,
and $[V]$ denotes the least constant $C(V)$ for which the resolvent estimate \eqref{resol} holds.
In Section \ref{sec5} we will show \eqref{Carle} which means that a potential for the resolvent estimate
can be made to work for the Carleman estimate.
In fact our basic strategy is to derive \eqref{Carle} only from \eqref{resol} in an abstract way.
This is done in several steps and based on the following weighted $L^2$ estimate
\begin{equation}\label{propa}
\bigg\|\int_{-\infty}^\infty e^{i(t-s)\Delta}F(\cdot,s)ds\bigg\|_{L_{t,x}^2(|V|)}\leq CC(V)\|F\|_{L_{t,x}^2(|V|^{-1})},
\end{equation}
where $e^{it\Delta}$ is the free Schr\"odinger propagator (see \eqref{du}),
and $C(V)$ is the constant in the resolvent estimate \eqref{resol}.
In Section \ref{sec4} we derive \eqref{propa} only from \eqref{resol} in two ways.
The first is a concrete one using a Fourier restriction estimate
which results from a limiting absorption principle obtained in Section \ref{sec3}.
On the other hand, the second is a more direct one appealing to Kato $H$-smoothing theory.
Our new resolvent estimates may have further applications for other related problems.
In fact we will apply them in Section \ref{sec8}
to the problem of well-posedness for the Schr\"odinger equation.
The method in this paper can be also applied to the case of time-dependent potentials $V(x,t)$ such that
\begin{equation*}
\sup_{t\in\mathbb{R}}|V(x,t)|\leq W(x)\in\mathcal{R}.
\end{equation*}
It takes up the final section, Section \ref{sec9}.

\smallskip

Throughout this paper, the letter $C$ stands for positive constants possibly different at each occurrence.
We also denote by $\widehat{f}$ and $\mathcal{F}^{-1}(f)$ the Fourier and the inverse Fourier transforms of $f$, respectively.

%%%%%%%%%%%%%%%%%%%%%%%%%%%%%%%%%%%%%%%%%%%%%%%%%%%%%%%%%%%%%%%%%%%%%%%%%%%%%%%%%%%%%%%%%%%%%%%%%%%%%%%%%%%%%%%%%%%

\section{Resolvent estimates}\label{sec2}
The aim of this section is twofold: firstly to look at which class of potentials can be of resolvent type,
and secondly to establish such a new class, extending and generalizing those in \cite{CS} and \cite{BBRV}, respectively.

Let us consider the resolvent $(-\Delta-z)^{-1}$ for $z\in\mathbb{C}\setminus\mathbb{R}$.
We shall use the standard notation $R_0(z)=(-\Delta-z)^{-1}$ for convenience.
(See the next section for details.)

We start with the scaling-critical Lebesgue class $L^{n/2}$.
Indeed, by the scaling $(x,t)\rightarrow(\lambda x,\lambda^2t)$,
$u_\lambda(x,t)=u(\lambda x,\lambda^2t)$ takes the equation
$i\partial_tu+\Delta u=V(x)u$ into
$i\partial_tu_\lambda+\Delta u_\lambda=V_\lambda(x)u_\lambda$,
where $V_\lambda(x)=\lambda^2V(\lambda x)$.
Thus, $\|V_\lambda\|_{L^p}=\lambda^{2-n/p}\|V\|_{L^p}$
and the $L^p$ norm of $V_\lambda$ is independent of $\lambda$ precisely when $p=n/2$.
In what follows, it will be convenient to keep in mind that a potential class
is said to be scaling invariant if it is invariant
under the scaling $V_\lambda(x)=\lambda^2V(\lambda x)$
forced by the Schr\"odinger equation onto the potential $V$ as above.
As we will see below, the class $L^{n/2}$ is of resolvent type with $[\,\cdot\,]_{L^{n/2}}=\|\cdot\|_{L^{n/2}}$,
but it is too small to contain the inverse square potential $V(x)=a/|x|^2$ ($a>0$)
which allows the resolvent estimate,
\begin{equation*}
\|R_0(z)f\|_{L^2(a/|x|^2)}\leq Ca\|f\|_{L^2(|x|^2/a)},
\end{equation*}
due to Kato and Yajima \cite{KY}.
This potential has attracted considerable interest from mathematical physics.
This is because the Schr\"odinger operator $-\Delta+a/|x|^2$
is physically related to the Hamiltonian of a spin-zero quantum particle in a Coulomb field (\cite{Cas})
and behaves very differently depending on the value of the constant $a$ ({\it cf. \cite{RS2,RS3}}).

Now we consider a wider class of resolvent type
where we can consider singularities of the type $a/|x|^2$.
Let $\mathcal{L}^{\alpha,p}$ denote the Morrey-Campatano class which is defined for $\alpha>0$ and $1\leq p\leq n/\alpha$ by
\begin{equation}\label{feff}
V\in\mathcal{L}^{\alpha,p}\quad\Leftrightarrow\quad
\|V\|_{\mathcal{L}^{\alpha,p}}
:=\sup_{Q\text{ cubes in }\mathbb{R}^{n}}|Q|^{\alpha/n}\bigg(\frac1{|Q|}\int_{Q}|V(y)|^pdy\bigg)^{1/p}<\infty.
\end{equation}
Then the case $\alpha=2$ is of special interest for us,
since $\mathcal{L}^{2,n/2}=L^{n/2}$ and
$a/|x|^2\in L^{n/2,\infty}\subset\mathcal{L}^{2,p}$ if $p<n/2$.
Also, $\mathcal{L}^{2,p}$ is the only possible scaling-invariant Morrey-Campatano class
because $\|V(\lambda x)\|_{\mathcal{L}^{\alpha,p}}=\lambda^{-\alpha}\|V\|_{\mathcal{L}^{\alpha,p}}$.
This special class is sometimes called the Fefferman-Phong class since it was introduced
by C. Fefferman and D. H. Phong regarding spectral properties of the Schr\"odinger operator $-\Delta+V(x)$.
Thanks to a result of Chanillo and Sawyer \cite{CS},
the Fefferman-Phong class is of resolvent type
with $[\,\cdot\,]_{\mathcal{L}^{2,p}}=\|\cdot\|_{\mathcal{L}^{2,p}}$ in some range of $p$.
Precisely, for $p>(n-1)/2$, $n\geq3$,
\begin{equation}\label{fp}
\|R_0(z)f\|_{L^2(|V|)}\leq C\|V\|_{\mathcal{L}^{2,p}}\|f\|_{L^2(|V|^{-1})}.
\end{equation}

The aim here is to extend \eqref{fp} to a new and wider class of potentials.
First we need to introduce some notation.
We say that $V$ is in the Kerman-Sawyer class $\mathcal{KS}_\alpha$ for $0<\alpha<n$ if
\begin{equation}\label{ksc}
\|V\|_{\mathcal{KS}_\alpha}:=
\sup_{Q}\bigg(\int_Q|V(x)|dx\bigg)^{-1}\int_Q\int_Q\frac{|V(x)V(y)|}{|x-y|^{n-\alpha}}dxdy<\infty.
\end{equation}
Here the sup is taken over all dyadic cubes $Q$ in $\mathbb{R}^n$.
Our initial motivation for \eqref{ksc} stemmed from finding all the possible potentials $V(x)$
which allow the so-called Fefferman-Phong inequality
\begin{equation}\label{ineq}
\int_{\mathbb{R}^n}|u|^2|V|dx\leq C_V\int_{\mathbb{R}^n}|\nabla u|^2dx,
\end{equation}
where $C_V$ is a suitable constant depending on $V$.
As is well known from \cite{F}, \eqref{ineq} holds for $V\in\mathcal{L}^{2,p}$, $1<p\leq n/2$, with $C_V\sim\|V\|_{\mathcal{L}^{2,p}}$,
but it is not valid for $p=1$ as remarked in \cite{D}.
As a result of Kerman and Sawyer \cite{KeS} (see \eqref{ee} below),
the least constant $C_V$ for which \eqref{ineq} holds may be taken
to be a constant multiple of the norm $\|V\|_{\mathcal{KS}_2}$,
and so $\mathcal{L}^{2,p}\subset\mathcal{KS}_2$ if $p\neq1$.
In general, $\mathcal{L}^{\alpha,p}\subset\mathcal{KS}_\alpha$ if $p\neq1$
(see \cite{BBRV}, Subsection 2.2).
Now we define a new function class which contains $\mathcal{L}^{2,p}$ for all $p>(n-1)/2$, $n\geq3$.

\begin{defn}\label{def2.1}
Let $n\geq3$.
We say that $V$ is in the class $\mathcal{S}_n$ if $V^{\frac{n-1}2}\in\mathcal{KS}_{n-1}$,
and define a quantity $[\,\cdot\,]_{\mathcal{S}_n}$ on $\mathcal{S}_n$ by
\begin{equation*}
[V]_{\mathcal{S}_n}:=\big\|V^{\frac{n-1}2}\big\|_{\mathcal{KS}_{n-1}}^{\frac{2}{n-1}}.
\end{equation*}
\end{defn}

First, note that $\mathcal{S}_n$ is just the same as $\mathcal{KS}_2$ when $n=3$.
In this case $\mathcal{S}_n$ is also closely related to the global Kato and Rollnik classes
which are defined by
\begin{equation}\label{K}
V\in\mathcal{K}\quad\Leftrightarrow\quad
\|V\|_{\mathcal{K}}:=
\sup_{x\in\mathbb{R}^3}\int_{\mathbb{R}^3}\frac{|V(y)|}{|x-y|}dy<\infty
\end{equation}
and
\begin{equation}\label{R}
V\in\mathcal{R}\quad\Leftrightarrow\quad
\|V\|_{\mathcal{R}}:=
\int_{\mathbb{R}^3}\int_{\mathbb{R}^3}\frac{|V(x)V(y)|}{|x-y|^2}dxdy<\infty,
\end{equation}
respectively.
These are fundamental ones in spectral and scattering theory ({\it cf. \cite{K,Si}}),
and their usefulness for dispersive properties of the Schr\"odinger equation
was revealed in the recent work \cite{RoS} of Rodnianski and Schlag.
It is an elementary matter to check that
$\mathcal{K}\subset\mathcal{S}_3$ and $\mathcal{R}\subset\mathcal{S}_3$.
Also it is easy to see that $\|V^m\|_{\mathcal{L}^{\alpha,p}}=\|V\|_{\mathcal{L}^{\alpha/m,mp}}^m$ for $m>0$.
If this holds still for $\mathcal{KS}_{\alpha}$,
then $\mathcal{S}_n$ would become equivalent to $\mathcal{KS}_2$ for all dimensions.
But such property does not carry over to $\mathcal{KS}_{\alpha}$
even though $\mathcal{L}^{\alpha,p}\subset\mathcal{KS}_{\alpha}$.
Next, we point out that the class $\mathcal{S}_n$ is wider than $\mathcal{L}^{2,p}$ for all $p>(n-1)/2$.
Indeed, if $V\in\mathcal{L}^{2,p}$,
then $V^{\frac{n-1}{2}}\in\mathcal{L}^{n-1,2p/(n-1)}\subset\mathcal{KS}_{n-1}$
when $2p/(n-1)>1$, and
$$[V]_{\mathcal{S}_n}=\big\|V^{\frac{n-1}2}\big\|_{\mathcal{KS}_{n-1}}^{\frac2{n-1}}
\leq\big\|V^{\frac{n-1}2}\big\|_{\mathcal{L}^{n-1,2p/(n-1)}}^{\frac2{n-1}}=\|V\|_{\mathcal{L}^{2,p}}.$$
Finally, it should be noted that $\mathcal{S}_n$ is scaling invariant with respect to $[\,\cdot\,]_{\mathcal{S}_n}$:
$$[\lambda^2V(\lambda x)]_{\mathcal{S}_n}=\lambda^2\big\|V(\lambda x)^{\frac{n-1}2}\big\|_{\mathcal{KS}_{n-1}}^{\frac{2}{n-1}}
=\lambda^2\big(\lambda^{-(n-1)}\|V^{\frac{n-1}2}\|_{\mathcal{KS}_{n-1}}\big)^{\frac{2}{n-1}}
=[V]_{\mathcal{S}_n}.$$

Now we are ready to state the following result extending \eqref{fp} to the class $\mathcal{S}_n$.

\begin{thm}\label{thm2.3}
Let $n\geq3$. Then the class $\mathcal{S}_n$ is of resolvent type.
Namely, for $V\in\mathcal{S}_n$
\begin{equation}\label{s}
\|R_0(z)f\|_{L^2(|V|)}\leq C[V]_{\mathcal{S}_n}\|f\|_{L^2(|V|^{-1})}
\end{equation}
with a constant $C$ independent of $z\in\mathbb{C}\setminus\mathbb{R}$.
\end{thm}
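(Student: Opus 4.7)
The plan is to dualize and reduce the estimate to showing that the symmetrized operator $T_z := |V|^{1/2} R_0(z) |V|^{1/2}$ is bounded on $L^2(\mathbb{R}^n)$ with $\|T_z\|_{L^2\to L^2}\le C[V]_{\mathcal{S}_n}$, uniformly in $z\in\mathbb{C}\setminus\mathbb{R}$. Using the scaling $V_\lambda(x)=\lambda^2V(\lambda x)$, under which $[V]_{\mathcal{S}_n}$ is invariant (as observed in the introduction) and which conjugates $R_0(z)$ with $\lambda^{-2}R_0(z/\lambda^2)$, I would first normalize $|z|=1$, and by taking adjoints restrict to the upper half-plane $\operatorname{Im} z>0$.

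Setting $\zeta=\sqrt{z}$ with $\operatorname{Im}\zeta>0$, I would use the Hankel-function representation of the Green's function
\[
G_z(x)=c_n\,\zeta^{(n-2)/2}|x|^{-(n-2)/2}H^{(1)}_{(n-2)/2}(\zeta|x|)
\]
together with the small- and large-argument asymptotics of $H^{(1)}_{(n-2)/2}$ to decompose $G_z = K_{\mathrm{near}}+K_{\mathrm{far}}$, where $|K_{\mathrm{near}}(x)|\lesssim|x|^{-(n-2)}\chi_{|x|\le1}$ is a truncated Newton-potential singularity and $K_{\mathrm{far}}(x)=a(x)e^{i\zeta|x|}$ with $|a(x)|\lesssim|x|^{-(n-1)/2}\chi_{|x|\ge1}$ is an oscillatory tail.

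For the near piece, the associated symmetrized integral operator is dominated by $|V|^{1/2}I_2|V|^{1/2}$. Factoring $I_2=I_1\circ I_1$ and applying the Kerman--Sawyer trace inequality $\|I_1 f\|_{L^2(|V|)}^2\lesssim \|V\|_{\mathcal{KS}_2}\|f\|_2^2$ (which is the version of the Fefferman--Phong inequality \eqref{ineq} referred to in the introduction), one obtains $\||V|^{1/2}I_2|V|^{1/2}\|_{L^2\to L^2}\lesssim\|V\|_{\mathcal{KS}_2}$. A H\"older-type comparison at the level of dyadic cubes, using the truncation $|x-y|\le 1$ to move from a condition on $V$ back to the defining condition on $V^{(n-1)/2}$, then bounds this by $[V]_{\mathcal{S}_n}$.

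The hard part is the oscillatory far piece $K_{\mathrm{far}}$. A brute-force absolute-value bound and a Kerman--Sawyer trace inequality at order $(n+1)/2$ would require $V\in\mathcal{KS}_{(n+1)/2}$, strictly stronger than $V\in\mathcal{S}_n$. One must genuinely exploit the phase $e^{i\zeta|x|}$ by invoking the Stein--Tomas restriction theorem on the sphere $|\xi|=\operatorname{Re}\zeta$: identifying $K_{\mathrm{far}}$ (up to smooth remainders) with the Fourier transform of a smooth density on this sphere, the corresponding extension--restriction composition $\mathcal{E}_\lambda\mathcal{R}_\lambda$ satisfies a weighted $L^2(|V|^{-1})\to L^2(|V|)$ estimate with operator norm controlled precisely by $\|V^{(n-1)/2}\|_{\mathcal{KS}_{n-1}}^{2/(n-1)}=[V]_{\mathcal{S}_n}$. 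Here the exponent $(n-1)/2$ on $V$ is matched to the $(n-1)/2$-order stationary-phase decay of $\widehat{d\sigma}$, while the Kerman--Sawyer order $\alpha=n-1$ matches the dimension of the sphere --- the definition of $\mathcal{S}_n$ is precisely tailored to this geometry. Combining the near and far bounds yields $\|T_z\|_{L^2\to L^2}\lesssim[V]_{\mathcal{S}_n}$, proving the theorem.
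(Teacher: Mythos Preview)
Your overall framing---reduce to the symmetrized operator $|V|^{1/2}R_0(z)|V|^{1/2}$ and normalize $|z|=1$ by scaling---matches the paper. But the near/far decomposition of the Green's function at $\lambda=1$ runs into genuine trouble on both pieces, and the paper's proof takes a different route precisely to avoid these issues.

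\textbf{The near piece.} You bound $|V|^{1/2}I_2^{\mathrm{trunc}}|V|^{1/2}$ by $\|V\|_{\mathcal{KS}_2}$ via Kerman--Sawyer, then assert a ``H\"older-type comparison at the level of dyadic cubes'' to pass from a $\mathcal{KS}_2$ condition on $V$ to the $\mathcal{KS}_{n-1}$ condition on $V^{(n-1)/2}$. This step is not justified and is likely false for $n\ge4$: the paper explicitly observes (just below Definition~\ref{def2.1}) that the property $\|V^m\|_{\mathcal{KS}_{m\alpha}}=\|V\|_{\mathcal{KS}_{\alpha}}^m$, which holds for Morrey spaces, \emph{fails} for the Kerman--Sawyer classes, so $\mathcal{S}_n$ is genuinely different from $\mathcal{KS}_2$. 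The truncation $|x-y|\le1$ does not repair this: on small cubes the singularity $|x-y|^{-(n-2)}$ is strictly stronger than $|x-y|^{-1}$, and a naive H\"older splitting of the double integral produces a non-integrable kernel.

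\textbf{The far piece.} The weighted $L^2(|V|^{-1})\to L^2(|V|)$ bound for the extension--restriction operator that you invoke is (via Corollary~\ref{rest} and the limiting absorption identity \eqref{di}) essentially the imaginary part of the very estimate you are proving; so the argument is circular unless you supply an independent proof. The unweighted Stein--Tomas theorem is an $L^p$ statement and does not by itself yield a weighted $L^2$ estimate with bound $[V]_{\mathcal{S}_n}$; the ``matching of exponents'' heuristic is suggestive but not a proof.

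\textbf{What the paper does instead.} The paper never splits near/far at $\lambda=1$. It embeds the problem in the analytic family $T_\lambda=|V|^{\lambda/2}(-\Delta-z)^{-\lambda}|V|^{\lambda/2}$ and applies Stein's complex interpolation between $\mathrm{Re}\,\lambda=0$ (trivial Plancherel bound) and $\mathrm{Re}\,\lambda=(n-1)/2$. At the latter endpoint, the Bessel-function kernel of $(-\Delta-z)^{-\lambda}$ satisfies $|K_\lambda(x)|\lesssim e^{\frac{\pi}{2}|\mathrm{Im}\,\lambda|}|x|^{-1}$ \emph{globally}---the oscillation at infinity and the reduced singularity at the origin conspire to give the single kernel of $I_{n-1}$---so Kerman--Sawyer applies directly with weight $|V|^{(n-1)/2}$, yielding the $\mathcal{KS}_{n-1}$ bound. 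Interpolating back to $\lambda=1$ produces the exponent $2/(n-1)$ in $[V]_{\mathcal{S}_n}$ automatically. This is the mechanism that correctly links the $\mathcal{KS}_{n-1}$ condition to the resolvent, and it is exactly what your decomposition bypasses.
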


\begin{rem}
It is worth comparing with Theorem 2.2 in \cite{BBRV}, which proves \eqref{s} only for $n=3$
with a different approach that does not work for higher dimensions $n\geq4$.
\end{rem}

\begin{proof}[Proof of Theorem \ref{thm2.3}]
By scaling we may first assume that $|z|=1$.
Indeed, note that
$$R_0(z)f(x)=(-\Delta-z)^{-1}f(x)=|z|^{-1}(-\Delta-z/|z|)^{-1}[f(|z|^{-1/2}\cdot)](|z|^{1/2}x).$$
So, if \eqref{s} holds for $|z|=1$, then for $z\in\mathbb{C}\setminus\mathbb{R}$
\begin{align*}
\|R_0(z)f\|_{L^2(|V|)}^2&=|z|^{-2}|z|^{-n/2}\|R_0(z/|z|)[f(|z|^{-1/2}\cdot)]\|_{L^2(|V(|z|^{-1/2}\cdot)|)}^2\\
&\leq C|z|^{-2}|z|^{-n/2}[V(|z|^{-1/2}\cdot)]_{\mathcal{S}_n}^2\|f(|z|^{-1/2}\cdot)\|_{L^2(|V(|z|^{-1/2}\cdot)|^{-1})}^2\\
&\leq C[V]_{\mathcal{S}_n}^2\|f\|_{L^2(|V|^{-1})}^2.
\end{align*}

Now we rewrite \eqref{s} in the equivalent form
\begin{equation*}
\big\||V|^{1/2}R_0(z)(|V|^{1/2}f)\big\|_{L^2}\leq C[V]_{\mathcal{S}_n}\|f\|_{L^2}
\end{equation*}
and will show this using Stein's complex interpolation ({\it cf. \cite{SW}}),
as in \cite{KRS,CS}, on an analytic family of operators $T_\lambda$ defined for $\lambda\in\mathbb{C}$ by
$$T_\lambda=|V|^{\lambda/2}(-\Delta-z)^{-\lambda}|V|^{\lambda/2}$$
with the principal branch.
First, from Plancherel's theorem we have the trivial estimate for $Re(\lambda)=0$:
\begin{equation}\label{dld}
\|T_\lambda f\|_{L^2}\leq e^{\pi|Im(\lambda)|}\|f\|_{L^2}.
\end{equation}
In fact, since $|V|^{\lambda/2}=1$ for $Re(\lambda)=0$, from Plancherel's theorem we see that
\begin{align*}
\|T_\lambda f\|_{L^2}&=\bigg\|\frac1{(|\xi|^2-z)^\lambda}\widehat{f}(\xi)\bigg\|_{L^2}\\
&\leq\sup_{\xi\in\mathbb{R}^n}\bigg|\frac1{(|\xi|^2-z)^\lambda}\bigg|\|f\|_{L^2}\\
&\leq\sup_{\xi\in\mathbb{R}^n}e^{Im(\lambda)\arg(|\xi|^2-z)}\|f\|_{L^2}\\
&\leq e^{\pi|Im(\lambda)|}\|f\|_{L^2}.
\end{align*}
On the other hand, we will get for $Re(\lambda)=(n-1)/2$
\begin{equation}\label{kern}
\|T_\lambda f\|_{L^2}\leq Ce^{\frac\pi2|Im(\lambda)|}\|V^{\frac{n-1}{2}}\|_{\mathcal{KS}_{n-1}}\|f\|_{L^2}.
\end{equation}
Then, Stein's complex interpolation between \eqref{dld} and \eqref{kern} would give
\begin{equation*}
\|T_1f\|_{L^2}\leq C\big\|V^{\frac{n-1}{2}}\big\|_{\mathcal{KS}_{n-1}}^{\frac{2}{n-1}}\|f\|_{L^2}
\end{equation*}
as desired.

It remains to show \eqref{kern}.
For this, we will use the following known integral kernel $K_{\lambda}$ of $(-\Delta-z)^{-\lambda}$
({\it cf. \cite{GS,KRS}}):
\begin{equation*}
K_{\lambda}(x)=\frac{e^{\lambda^2}2^{-\lambda+1}}{(2\pi)^{n/2}\Gamma(\lambda)\Gamma(n/2-\lambda)}
\bigg(\frac{z}{|x|^2}\bigg)^{\frac12(\frac n2-\lambda)}B_{n/2-\lambda}(\sqrt{z|x|^2}),
\end{equation*}
where $B_{\nu}(w)$ is the Bessel kernel of the third kind which satisfies for $Re\,w>0$
\begin{equation}\label{21}
|e^{\nu^2}\nu B_{\nu}(w)|\leq C|w|^{-|Re\,\nu|},\quad|w|\leq1,
\end{equation}
and
\begin{equation}\label{22}
|B_{\nu}(w)|\leq C_{Re\,\nu}e^{-Re\,w}|w|^{-1/2},\quad|w|\geq1.
\end{equation}
See \cite{KRS}, p.\,339 for details.
The key point is that the kernel $K_{\lambda}$ can be controlled
by that of the fractional integral operator $I_\alpha$
which is defined for $0<\alpha<n$ by
$$I_\alpha f(x)=\int_{\mathbb{R}^n}\frac{f(y)}{|x-y|^{n-\alpha}}dy.$$
To show this, note first that
$Re(\sqrt{z|x|^2})=|x|\cos(\frac12\arg z)>0$ for $x\neq0$,
since $-\pi<\arg z\leq\pi$ by the principal branch, and $z\not\in\mathbb{R}$.
Then, if $Re(\lambda)=(n-1)/2$, it follows from \eqref{21} that for $|x|\leq1$
\begin{align*}
|K_{\lambda}(x)|&\leq C\bigg|\frac{e^{\lambda^2}e^{-(n/2-\lambda)^2}}{n/2-\lambda}
\bigg(\frac{z}{|x|^2}\bigg)^{\frac12(\frac n2-\lambda)}e^{(n/2-\lambda)^2}(n/2-\lambda)B_{n/2-\lambda}(\sqrt{z|x|^2})\bigg|\\
&\leq C\big|z^{\frac12(\frac n2-\lambda)}|x|^{-(\frac n2-\lambda)}\big|\big|\sqrt{z|x|^2}\big|^{-1/2}\\
&\leq Ce^{Im(\lambda)\frac12\arg z}|x|^{-1}\\
&\leq Ce^{\frac\pi2|Im(\lambda)|}|x|^{-1}.
\end{align*}
On the other hand, using \eqref{22}, one has for $|x|\geq1$
\begin{align*}
|K_{\lambda}(x)|&\leq C\bigg|
\bigg(\frac{z}{|x|^2}\bigg)^{\frac12(\frac n2-\lambda)}B_{n/2-\lambda}(\sqrt{z|x|^2})\bigg|\\
&\leq C\big|z^{\frac12(\frac n2-\lambda)}|x|^{-(\frac n2-\lambda)}\big|\big|\sqrt{z|x|^2}\big|^{-1/2}\\
&\leq Ce^{\frac\pi2|Im(\lambda)|}|x|^{-1}.
\end{align*}
Hence for $Re(\lambda)=(n-1)/2$, $K_{\lambda}(x)$ is controlled by the kernel $|x|^{-1}$ of $I_{n-1}$.

Now we use the following lemma,
which characterizes weighted $L^2$ estimates for fractional integrals,
due to Kerman and Sawyer \cite{KeS} (see Theorem 2.3 there and also Lemma 2.1 in \cite{BBRV}):

\begin{lem}
Let $0<\alpha<n$. Assume that $w$ is a nonnegative measurable function on $\mathbb{R}^n$.
Then there exists a constant $C_w$ depending on $w$ such that $w\in\mathcal{KS}_\alpha$ if and only if the following two equivalent estimates
\begin{equation}\label{ee}
\|I_{\alpha/2}f\|_{L^2(w)}\leq C_w\|f\|_{L^2}
\end{equation}
and
$$\|I_{\alpha/2}f\|_{L^2}\leq C_w\|f\|_{L^2(w^{-1})}$$
are valid for all measurable functions $f$ on $\mathbb{R}^n$.
Furthermore, the constant $C_w$ may be taken to be a constant multiple of $\|w\|_{\mathcal{KS}_\alpha}^{1/2}$.
\end{lem}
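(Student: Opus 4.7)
My plan is to prove the lemma in three steps: equivalence of the two estimates, necessity of the $\mathcal{KS}_\alpha$ condition (together with the lower bound $C_w\gtrsim\|w\|_{\mathcal{KS}_\alpha}^{1/2}$), and sufficiency (with the matching upper bound).

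First, the kernel $|x-y|^{-n+\alpha/2}$ of $I_{\alpha/2}$ is symmetric, so $I_{\alpha/2}$ is formally self-adjoint on $L^2(\mathbb{R}^n)$. The first inequality says that $T\colon f\mapsto w^{1/2}I_{\alpha/2}f$ is bounded from $L^2$ to $L^2$ with norm $\leq C_w$. Passing to adjoints, this is equivalent to $T^\ast\colon g\mapsto I_{\alpha/2}(w^{1/2}g)$ being bounded $L^2\to L^2$ with the same norm; substituting $h=w^{1/2}g$ converts this into the second estimate, and reversing the chain gives the converse, so the two inequalities are equivalent.

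Next, for necessity I would test the dual estimate on $h=w\chi_Q$ for an arbitrary dyadic cube $Q$. The right-hand side becomes $C_w^2\int_Q w$. Using Plancherel together with $\widehat{I_{\alpha/2}\phi}(\xi)=c_n|\xi|^{-\alpha/2}\widehat{\phi}(\xi)$, one has the composition identity $\int|I_{\alpha/2}\phi|^2=c\int\phi\,I_\alpha\phi$, whence
\[
\|I_{\alpha/2}(w\chi_Q)\|_{L^2}^2=c\int(w\chi_Q)\,I_\alpha(w\chi_Q)=c\iint_{Q\times Q}\frac{w(x)w(y)}{|x-y|^{n-\alpha}}\,dx\,dy.
\]
Dividing by $\int_Q w$ and taking the supremum over dyadic $Q$ gives $\|w\|_{\mathcal{KS}_\alpha}\leq cC_w^2$, so $C_w\gtrsim\|w\|_{\mathcal{KS}_\alpha}^{1/2}$.

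The sufficiency direction is the substantive part. The above calculation identifies the $\mathcal{KS}_\alpha$ condition with the Sawyer-type testing condition for the two-weight inequality $\|I_{\alpha/2}h\|_{L^2}\leq C\|h\|_{L^2(w^{-1})}$ evaluated on test functions of the form $h=w\chi_Q$. The task is therefore to upgrade this cube-by-cube testing bound to the full inequality with constant $C\lesssim\|w\|_{\mathcal{KS}_\alpha}^{1/2}$. I would carry this out by the standard Sawyer strategy: decompose $|x-y|^{-n+\alpha/2}\sim\sum_k 2^{k(-n+\alpha/2)}\chi_{\{|x-y|\sim 2^k\}}$, organize the action of $I_{\alpha/2}$ on $h$ via a family of principal cubes on which the averages $|Q|^{-1}\int_Q w^{-1}|h|$ jump by a fixed factor, and apply the testing condition on each principal cube to dominate the local contribution. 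The principal cubes form a Carleson family, so a Carleson-embedding (or sparse) argument sums the pieces to the desired global $L^2$-bound with the advertised constant. The hard part is precisely this passage from the testing bound on indicators to the full two-weight inequality; it is the heart of \cite{KeS} (and of Lemma~2.1 in \cite{BBRV} for the case used there), and in a clean exposition one would invoke Sawyer's two-weight theorem for positive kernels directly rather than rerun its principal-cube machinery in the symmetric fractional-integral setting.
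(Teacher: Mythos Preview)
The paper does not prove this lemma at all: it is stated as a quotation from the literature, attributed to Kerman and Sawyer \cite{KeS} (Theorem~2.3 there) and to Lemma~2.1 in \cite{BBRV}, and is then used as a black box in the proof of Theorem~\ref{thm2.3}. So there is no ``paper's own proof'' to compare against.

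Your proposal is a correct outline of how the cited result is actually established. The duality argument for the equivalence of the two estimates is fine, and your necessity calculation via the composition identity $I_{\alpha/2}\circ I_{\alpha/2}=cI_\alpha$ and testing on $h=w\chi_Q$ is exactly the standard one. For sufficiency you correctly identify the $\mathcal{KS}_\alpha$ condition as the Sawyer testing condition for the two-weight inequality and point to the principal-cube/Carleson-embedding machinery of \cite{KeS}; this is indeed the substantive step, and your sketch is accurate in spirit though not a self-contained proof. In short, your write-up supplies what the paper deliberately omits, and does so along the lines of the original source it cites.
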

First, we note that the two estimates in the lemma directly implies that
\begin{equation}\label{s2}
\big\||V|^{1/2}I_\alpha(|V|^{1/2}f)\big\|_{L^2}\leq C\|V\|_{\mathcal{KS}_\alpha}\|f\|_{L^2}.
\end{equation}
Then, since $|K_{\lambda}(x)|\leq Ce^{\frac\pi2|Im(\lambda)|}|x|^{-1}$ for $Re(\lambda)=(n-1)/2$,
using \eqref{s2} with $\alpha=n-1$, we get the desired bound \eqref{kern}.
Indeed,
\begin{align*}
\|T_\lambda f\|_{L^2}
&=\big\||V^{\frac{n-1}2}|^{1/2}(-\Delta-z)^{-\lambda}\big[|V^{\frac{n-1}2}|^{1/2}(|V|^{iIm(\lambda)/2}f)\big]\big\|_{L^2}\\
&\leq Ce^{\frac\pi2|Im(\lambda)|}
\big\||V^{\frac{n-1}2}|^{1/2}I_{n-1}\big[|V^{\frac{n-1}2}|^{1/2}(|V|^{iIm(\lambda)/2}f)\big]\big\|_{L^2}\\
&\leq Ce^{\frac\pi2|Im(\lambda)|}\|V^{\frac{n-1}2}\|_{\mathcal{KS}_{n-1}}\|f\|_{L^2}.
\end{align*}
This completes the proof.
\end{proof}

%%%%%%%%%%%%%%%%%%%%%%%%%%%%%%%%%%%%%%%%%%%%%%%%%%%%%%%%%%%%%%%%%%%%%%%%%%%%%%%%%%%%%%%%%%%%%%%%%%%%%%%%%%%%%%%%%%%

\section{Limiting absorption principle and Fourier restriction estimates}\label{sec3}

In this section we study the limiting absorption principle in spectral theory and
its relation with weighted restriction estimates in harmonic analysis.
This relationship seems to have been well known, but to the best of our knowledge,
it was treated in the literature (\cite{BRV,BBRV}) only in certain particular cases.
Here we put it in a more abstract framework for our purpose in this paper.
The resulting restriction estimates will be fundamentally used in the next section in obtaining
weighted $L^2$ estimates for the Schr\"odinger propagator.
Alternatively, we will obtain them more directly by appealing to Kato $H$-smoothing theory.

First we recall some basic notions and facts from spectral theory.
Let $T$ be a closed\footnote{\,If $T$ is not closed, then $T-z$ and $(T-z)^{-1}$ are not closed.
So, $\rho(T)$ is empty. This is why $T$ is assumed to be closed.} linear operator
on a Hilbert space $\mathcal{H}$ over $\mathbb{C}$.
Then we denote by $\rho(T)$ the resolvent set of $T$ which is the set of $z\in\mathbb{C}$
for which $T-z$ is invertible and the inverse $(T-z)^{-1}$ is a bounded operator on $\mathcal{H}$.
The spectrum $\sigma(T)=\mathbb{C}\setminus\rho(T)$ is given by the complement of the resolvent set.
One of the most fundamental facts is that the resolvent $R_T(z):=(T-z)^{-1}$ is an analytic operator-valued
function on $\rho(T)$ and the operator norm $\|R_T(z)\|$ satisfies
$$\|R_T(z)\|\geq[\,\text{dist}\,(z,\sigma(T))]^{-1}.$$
(See, for example, \cite{T}.)
The point here is that the norm of $R_T(z)$ as a map from $\mathcal{H}$ to $\mathcal{H}$
diverges as $z$ approaches $\sigma(T)$.
But there is a principle that $R_T(z)$ can remain bounded in a sense.
This is referred to as the limiting absorption principle.

Of special interest is the free resolvent which is usually denoted by $R_0(z)=(-\Delta-z)^{-1}$ on  $L^2(\mathbb{R}^n)$.
In this case, the spectrum is $[0,\infty)$ and there is a classical result due to Agmon \cite{A}
which states that the limits
$$\lim_{\varepsilon\rightarrow0}R_0(\lambda\pm i\varepsilon):=R_0(\lambda\pm i0)$$
exist for $\lambda\in(0,\infty)$ in the norm of bounded operators
from $L^2(\langle x\rangle^sdx)$ to  $L^2(\langle x\rangle^{-s}dx)$ for $s>1$.
(Here $\langle x\rangle=(1+|x|^2)^{1/2}$.)
This shows that the free resolvent can remain bounded between weighted $L^2$ spaces,
as $z$ approaches the spectrum (except $0$).
Furthermore, the principle holds in the following weak form which is called the weak limiting absorption principle:
For $f,g\in L^2(\langle x\rangle^{-s}dx)$
$$\lim_{\varepsilon\rightarrow0}\langle R_0(\lambda\pm i\varepsilon)f,g\rangle=\langle R_0(\lambda\pm i0)f,g\rangle,$$
where $\langle\text{ },\text{ }\rangle$ denotes the usual inner product on the Hilbert space $L^2(\mathbb{R}^n)$.
These principles play a key role in the study of Schr\"odinger operators as well as Helmholtz equations.

For our purpose, it is enough to consider the following weak principle:

\begin{prop}\label{wlap}
Let $n\geq2$ and let $V:\mathbb{R}^n\rightarrow\mathbb{C}$ be such that
the resolvent estimate
\begin{equation}\label{ure}
\|R_0(z)f\|_{L^2(|V|)}\leq C(V)\|f\|_{L^2(|V|^{-1})}
\end{equation}
holds uniformly in $z\in\mathbb{C}\setminus\mathbb{R}$.
Then, for $\lambda>0$ there exists the weak limit
\begin{equation}\label{wl}
\lim_{\varepsilon\rightarrow0}\langle R_0(\lambda\pm i\varepsilon)f,g\rangle:=\langle R_0(\lambda\pm i0)f,g\rangle
\end{equation}
whenever $f,g\in L^2(|V|^{-1})$.
Furthermore, $R_0(\lambda\pm i0)$ satisfies
\begin{equation}\label{di2}
\|R_0(\lambda\pm i0)f\|_{L^2(|V|)}\leq C(V)\|f\|_{L^2(|V|^{-1})}
\end{equation}
and can be given by the following distributional identity
\begin{equation}\label{di}
R_0(\lambda\pm i0)f(x)=
\pm\frac{i\pi}{2\sqrt{\lambda}}\widehat{d\sigma_{\sqrt{\lambda}}}\ast f(x)+
p.v.\int_{\mathbb{R}^n}e^{ix\cdot\xi}\frac{\widehat{f}(\xi)}{|\xi|^2-\lambda}d\xi.
\end{equation}
Here, $d\sigma_r$ denotes the induced Lebesgue measure on the sphere $S_r^{n-1}$ in $\mathbb{R}^n$ with radius $r$,
and the singular integral in \eqref{di} is taken in the principal value sense.
\end{prop}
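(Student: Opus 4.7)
My plan is to prove the distributional identity \eqref{di} first for Schwartz functions by a direct Fourier-side computation, and then upgrade existence of the weak limit \eqref{wl} and the bound \eqref{di2} to all of $L^2(|V|^{-1})$ by a density argument powered by the hypothesis \eqref{ure}.

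For $f,g\in\mathcal{S}(\mathbb{R}^n)$ and $\varepsilon>0$, Plancherel's identity recasts the pairing as
\begin{equation*}
\langle R_0(\lambda\pm i\varepsilon)f,g\rangle=\int_{\mathbb{R}^n}\frac{\widehat{f}(\xi)\overline{\widehat{g}(\xi)}}{|\xi|^2-\lambda\mp i\varepsilon}\,d\xi.
\end{equation*}
To take $\varepsilon\to0^+$ I apply the Sokhotski-Plemelj identity $(s\mp i\varepsilon)^{-1}\to\mathrm{p.v.}\,s^{-1}\pm i\pi\delta(s)$ with $s=|\xi|^2-\lambda$, converting the Dirac contribution by the coarea formula $\delta(|\xi|^2-\lambda)\,d\xi=(2\sqrt\lambda)^{-1}\,d\sigma_{\sqrt\lambda}$ into an integral over the sphere $S_{\sqrt\lambda}^{n-1}$. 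Undoing the Plancherel pairing, this surface term becomes the convolution $\pm(i\pi/2\sqrt\lambda)\widehat{d\sigma_{\sqrt\lambda}}\ast f$, while the principal-value symbol produces the second summand of \eqref{di}. This establishes \eqref{di} on the dense subspace $\mathcal{S}\subset L^2(|V|^{-1})$.

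For arbitrary $f,g\in L^2(|V|^{-1})$, the hypothesis \eqref{ure} and Cauchy-Schwarz yield the uniform bound
\begin{equation*}
|\langle R_0(\lambda\pm i\varepsilon)f,g\rangle|\leq C(V)\|f\|_{L^2(|V|^{-1})}\|g\|_{L^2(|V|^{-1})}.
\end{equation*}
Density of $\mathcal{S}$ in $L^2(|V|^{-1})$ follows by the same truncation by $D_n=\{|V|^{-1/2}\leq n\}$ used for $\mathcal{L}_2$ in the introduction, so a standard three-$\eta$ argument combining the uniform bound with convergence on $\mathcal{S}$ delivers the weak limit \eqref{wl}. Passing to the limit in the uniform bound then gives \eqref{di2}, and $R_0(\lambda\pm i0)$ is the Riesz representative of the resulting bounded sesquilinear form.

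The delicate step is propagating \eqref{di} from $\mathcal{S}$ to $L^2(|V|^{-1})$, since the principal-value integral on the right has no obvious pointwise meaning for a generic $f\in L^2(|V|^{-1})$. I would handle this by duality: the limiting form $\langle R_0(\lambda\pm i0)f,g\rangle$ exists by the previous step, while the surface-measure piece is separately bounded because the Fourier restriction $f\mapsto\widehat{f}|_{S_{\sqrt\lambda}^{n-1}}$ inherits an $L^2(d\sigma_{\sqrt\lambda})$ bound from \eqref{ure} through the identity
\begin{equation*}
\|\widehat{f}\|_{L^2(d\sigma_{\sqrt\lambda})}^2=\frac{2\sqrt\lambda}{\pi}\,\mathrm{Im}\,\langle R_0(\lambda+i0)f,f\rangle.
\end{equation*}
The principal-value piece is then defined as the difference of two bounded sesquilinear forms on $L^2(|V|^{-1})\times L^2(|V|^{-1})$, and \eqref{di} extends to the full space by continuity together with the Schwartz case.
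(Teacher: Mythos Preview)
Your approach is essentially the same as the paper's: compute the boundary values on smooth data via Sokhotski--Plemelj on the Fourier side, then extend by density using the uniform bound \eqref{ure}. The paper writes the pairing in polar coordinates as a one-dimensional Cauchy integral before invoking Sokhotski--Plemelj, while you invoke it directly with the coarea identity $\delta(|\xi|^2-\lambda)\,d\xi=(2\sqrt\lambda)^{-1}d\sigma_{\sqrt\lambda}$; these are equivalent. Your final paragraph, interpreting the principal-value term for general $f\in L^2(|V|^{-1})$ as the difference of two bounded forms, is actually more careful than the paper, which simply asserts that \eqref{di} follows from the smooth-case formula.

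One small slip: the truncation $D_n=\{|V|^{-1/2}\le n\}$ you cite from the introduction runs in the wrong direction for the density you need. That set bounds $|V|^{-1}$ \emph{above}, which is what lets one pass from $L^2$ into $L^2(|V|^{-1})$ (showing $\mathcal{L}_2$ dense in $L^2$). Here you need the reverse: starting from $f\in L^2(|V|^{-1})$, you want $\chi_{D_n}f\in L^2$ so that a mollifier can then reach $C_0^\infty\subset\mathcal{S}$. For that one takes $D_n=\{|V|^{-1}\ge 1/n\}=\{|V|\le n\}$, exactly as the paper does in its own proof of this proposition; then $\int_{D_n}|f|^2\le n\int|f|^2|V|^{-1}<\infty$. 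With this correction your density step goes through and the rest of the argument is fine.
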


Assuming for the moment this proposition, we first explain how to deduce Fourier restriction estimates
from the weak principle.
From \eqref{di2} and taking the imaginary part of the operator $R_0(\lambda\pm i0)$ in \eqref{di},
it follows that
\begin{equation}\label{dl}
\|\widehat{d\sigma_r}\ast f\|_{L^2(|V|)}\leq rC(V)\|f\|_{L^2(|V|^{-1})}.
\end{equation}
Then, using this and the standard $TT^\ast$ argument of Tomas \cite{To},
we see that
\begin{align*}
\int_{S_r^{n-1}}|\widehat{f}|^2d\sigma=\int_{S_r^{n-1}}\widehat{f}\overline{\widehat{f}}d\sigma
&=\int_{\mathbb{R}^n}f(f\ast\widehat{d\sigma_r})dx\\
&\leq\|f\|_{L^2(|V|^{-1})}\|f\ast\widehat{d\sigma_r}\|_{L^2(|V|)}\\
&\leq rC(V)\|f\|_{L^2(|V|^{-1})}^2.
\end{align*}
Namely, we get the following weighted $L^2$ restriction estimate
$$\|\widehat{f}\|_{L^2(S_r^{n-1})}\leq r^{1/2}C(V)^{1/2}\|f\|_{L^2(|V|^{-1})}.$$
By duality this is equivalent to
$$\|\widehat{fd\sigma_r}\|_{L^2(|V|)}\leq r^{1/2}C(V)^{1/2}\|f\|_{L^2(S_r^{n-1})}.$$
The first estimate \eqref{dl} can be also deduced immediately from these two equivalent estimates,
and what we have just explained is summarized in the following corollary.

\begin{cor}\label{rest}
Let $n\geq2$ and let $V:\mathbb{R}^n\rightarrow\mathbb{C}$ be such that the resolvent estimate \eqref{ure} holds.
Then the following three equivalent estimates hold:
\begin{equation*}
\|\widehat{d\sigma_r}\ast f\|_{L^2(|V|)}\leq rC(V)\|f\|_{L^2(|V|^{-1})},
\end{equation*}
\begin{equation*}
\|\widehat{f}\|_{L^2(S_r^{n-1})}\leq r^{1/2}C(V)^{1/2}\|f\|_{L^2(|V|^{-1})},
\end{equation*}
\begin{equation}\label{43}
\|\widehat{fd\sigma_r}\|_{L^2(|V|)}\leq r^{1/2}C(V)^{1/2}\|f\|_{L^2(S_r^{n-1})}.
\end{equation}
\end{cor}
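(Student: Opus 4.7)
The plan is to obtain the first inequality directly from the distributional identity \eqref{di} provided by Proposition \ref{wlap}, then derive the restriction estimate by the standard Tomas $TT^\ast$ argument, and finally read off the extension estimate by duality, with the chain closed by the composition extension-then-restriction.

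For the first inequality, I would subtract the two identities in \eqref{di}. The principal-value integrals are identical and cancel, leaving
\[
R_0(\lambda+i0)f - R_0(\lambda-i0)f = \frac{i\pi}{\sqrt{\lambda}}\, \widehat{d\sigma_{\sqrt{\lambda}}} \ast f.
\]
By \eqref{di2} each of $R_0(\lambda\pm i0)$ is bounded from $L^2(|V|^{-1})$ to $L^2(|V|)$ with norm $C(V)$, so the triangle inequality yields
\[
\|\widehat{d\sigma_{\sqrt{\lambda}}}\ast f\|_{L^2(|V|)} \leq \tfrac{2\sqrt{\lambda}}{\pi}\, C(V)\|f\|_{L^2(|V|^{-1})},
\]
and substituting $r=\sqrt{\lambda}$ gives the first bound (with a constant smaller than the stated $rC(V)$, since $2/\pi<1$).

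For the restriction estimate I would follow the $TT^\ast$ identity already sketched just before the corollary: expand $\int_{S_r^{n-1}}|\widehat{f}|^2\, d\sigma$ as $\int_{\mathbb{R}^n} f\,\overline{(\widehat{d\sigma_r}\ast f)}\, dx$ via Parseval and unfolding of the convolution, then apply Cauchy--Schwarz with the mutually dual weights $|V|^{-1}$ and $|V|$ and invoke the first inequality. Taking square roots delivers the second bound. The third bound is then immediate by duality, since the restriction operator $f\mapsto\widehat{f}|_{S_r^{n-1}}\colon L^2(|V|^{-1})\to L^2(S_r^{n-1})$ and the extension operator $g\mapsto\widehat{g\,d\sigma_r}\colon L^2(S_r^{n-1})\to L^2(|V|)$ are formal adjoints with respect to the natural weighted pairings and therefore carry the same operator norm. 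Finally, composing these two operators reproduces $\widehat{d\sigma_r}\ast f$ (up to a reflection in $x$ that is harmless in $L^2$), which closes the circle of equivalences back to the first inequality.

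The only potentially delicate point is the cancellation step at the start: one must know that the principal-value integral appearing in \eqref{di} is a genuine tempered distribution whose value is independent of the sign in $R_0(\lambda\pm i0)$, so that the two copies literally subtract to zero. This is precisely what the identity supplied by Proposition \ref{wlap} encodes, so once that proposition is in hand the remainder of the argument is bookkeeping.
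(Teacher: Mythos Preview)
Your proposal is correct and follows essentially the same route as the paper: the paper also isolates the $\widehat{d\sigma_r}\ast f$ term by taking the imaginary part of $R_0(\lambda\pm i0)$ in \eqref{di} (which amounts to your subtraction), then runs the Tomas $TT^\ast$ argument and duality exactly as you outline. Your explicit tracking of the constant $2/\pi$ and your remark about the cancellation of the principal-value terms are slightly more detailed than the paper's one-line ``taking the imaginary part,'' but the argument is the same.
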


\begin{rem}
Given a class $\mathcal{R}$ of resolvent type, this corollary clearly holds for $V\in\mathcal{R}$ with $C(V)\sim[V]_{\mathcal{R}}$.
In the cases of $\mathcal{R}=\mathcal{L}^{2,p},\,p>(n-1)/2$, and $\mathcal{S}_3$,
the above restriction estimates can be found in \cite{CS} and \cite{BBRV}, respectively.
In view of the resolvent estimates in Theorem \ref{thm2.3},
these previous results are now extended to the class $\mathcal{S}_n$.
\end{rem}

\begin{proof}[Proof of Proposition \ref{wlap}]
We basically follow the argument of Agmon \cite{A}.
In fact the argument given by him was for the case $V(x)=\langle x\rangle^{-s}$, $s>1$,
but it goes through in our abstract setting.

Let us first consider the analytic function $F(z)=\langle R_0(z)f,g\rangle$ for $z\in\mathbb{C}\setminus\mathbb{R}$.
Then it follows from \eqref{ure} that
$$|\langle R_0(z)f,g\rangle|\leq C(V)\|f\|_{L^2(|V|^{-1})}\|g\|_{L^2(|V|^{-1})}.$$
Since this estimate is uniform in $z$ and $C_0^\infty$ is dense in $L^2(|V|^{-1})$,
by a standard limiting argument, it suffices to show \eqref{wl} for $f,g\in C_0^\infty$.
Here, to see that $C_0^\infty$ is dense in $L^2(|V|^{-1})$,
consider first $D_n=\{x\in\mathbb{R}^n:|V|^{-1}\geq1/n\}$.
Then, for $f\in L^2(|V|^{-1})$ the function $\chi_{D_n}f$ is contained in $L^2\cap L^2(|V|^{-1})$,
and $\chi_{D_n}f|V|^{-1/2}\rightarrow f|V|^{-1/2}$ as $n\rightarrow\infty$.
By the Lebesgue dominated convergence theorem,
it follows now that $\chi_{D_n}f|V|^{-1/2}\rightarrow f|V|^{-1/2}$ in $L^2$.
This means that $L^2\cap L^2(|V|^{-1})$ is dense in $L^2(|V|^{-1})$.
On the other hand, it is easy to see that $C_0^\infty$ is dense in $L^2\cap L^2(|V|^{-1})$
with respect to the norm $L^2(|V|^{-1})$ using a $C_0^\infty$ approximate identity.
Consequently, $C_0^\infty$ is dense in $L^2(|V|^{-1})$.

Now, using Parseval's formula and changing to polar coordinates, we see that
\begin{align*}
\nonumber\langle R_0(z)f,g\rangle&=
\int_{\mathbb{R}^n}\frac{\widehat{f}(\xi)\overline{\widehat{g}(\xi)}}{|\xi|^2-z}d\xi\\
&=\frac12\int_0^\infty\frac{t^{(n-2)/2}}{t-z}
\bigg(\int_{|\omega|=1}\widehat{f}(\sqrt{t}\omega)\overline{\widehat{g}(\sqrt{t}\omega)}d\omega\bigg)dt.
\end{align*}
Thus, by a well-known continuity property of Cauchy type integrals
(Sokhotsky-Plemelj formula), $\langle R_0(z)f,g\rangle$
has continuous boundary values given by
\begin{equation}\label{form}
\lim_{z\rightarrow\lambda,\,\pm Imz>0}\langle R_0(z)f,g\rangle
=\pm\frac{\pi i}{2\sqrt{\lambda}}
\int_{|\xi|=\sqrt{\lambda}}\widehat{f}(\xi)\overline{\widehat{g}(\xi)}d\sigma
+p.v.\int_{\mathbb{R}^n}\frac{\widehat{f}(\xi)\overline{\widehat{g}(\xi)}}{|\xi|^2-\lambda}d\xi
\end{equation}
on both sides of $(0,\infty)$ (see (4.7) in \cite{A}).
%It is deduced from this that $\langle R_0(z)f,g\rangle$ admits continuous boundary values
%on both sides of $(0,\infty)$. %for $f,g\in L^2(|V|^{-1})$.
This means that for $f\in L^2(|V|^{-1})$ there exists the weak limit
$$w-\lim_{z\rightarrow\lambda,\,\pm Imz>0}R_0(z)f:=R_0(\lambda\pm i0)f$$
in $L^2(|V|)$.
Namely, for $f,g\in L^2(|V|^{-1})$
$$\lim_{z\rightarrow\lambda,\,\pm Imz>0}\langle R_0(z)f,g\rangle=\langle R_0(\lambda\pm i0)f,g\rangle,$$
and the identity ~\eqref{di} follows immediately from ~\eqref{form}.
Now, by duality the estimate \eqref{di2} is deduced from \eqref{wl} and \eqref{ure}.
Indeed, note that
\begin{align*}
\|R_0(\lambda\pm i0)f\|_{L^2(|V|)}
&=\sup_{\|\widetilde{g}\|_{L^2}\leq1}|\langle|V|^{1/2}R_0(\lambda\pm i0)f,\widetilde{g}\rangle|\\
&=\sup_{\|\widetilde{g}\|_{L^2}\leq1}|\langle R_0(\lambda\pm i0)f,|V|^{1/2}\widetilde{g}\rangle|\\
&=\sup_{\|g\|_{L^2(|V|^{-1})}\leq1}|\langle R_0(\lambda\pm i0)f,g\rangle|,
\end{align*}
where $g=|V|^{1/2}\widetilde{g}$.
By \eqref{wl} and \eqref{ure}, this readily leads to
$$\| R_0(\lambda\pm i0)f\|_{L^2(|V|)}\leq C(V)\|f\|_{L^2(|V|^{-1})}.$$
This completes the proof.
\end{proof}

%%%%%%%%%%%%%%%%%%%%%%%%%%%%%%%%%%%%%%%%%%%%%%%%%%%%%%%%%%%%%%%%%%%%%%%%%%%%%%%%%%%%%%%%%%%%%%%%%%%%%%%%%%%

\section{Weighted $L^2$ estimates for the Schr\"odinger propagator}\label{sec4}

Let us first consider the following initial value problem for the free Schr\"odinger equation:
\begin{equation*}
\left\{
\begin{array}{ll}
i\partial_tu+\Delta u=0,\\
u(x,0)=u_0(x).
\end{array}\right.
\end{equation*}
In view of the Fourier transform, the solution is explicitly given by
\begin{equation}\label{du}
u(x,t)=e^{it\Delta}u_0(x)=(2\pi)^{-n}\int_{\mathbb{R}^n} e^{ix\cdot\xi}e^{-it|\xi|^2}\widehat{u_0}(\xi)d\xi,
\end{equation}
where the evolution operator $e^{it\Delta}$ is called the free Schr\"odinger propagator.
Then the upshot of this section is the following weighted $L^2$ estimate for the propagator,
which will play a key role in the next section in obtaining our abstract Carleman estimates.

\begin{prop}\label{istr}
Let $n\geq2$ and let $V:\mathbb{R}^n\rightarrow\mathbb{C}$ be such that the resolvent estimate
\begin{equation}\label{ure0}
\|R_0(z)f\|_{L^2(|V|)}\leq C(V)\|f\|_{L^2(|V|^{-1})}
\end{equation}
holds uniformly in $z\in\mathbb{C}\setminus\mathbb{R}$.
Then we have
\begin{equation}\label{up}
\bigg\|\int_{-\infty}^\infty e^{i(t-s)\Delta}F(\cdot,s)ds\bigg\|_{L_{t,x}^2(|V|)}
\leq CC(V)\|F\|_{L_{t,x}^2(|V|^{-1})},
\end{equation}
where $F:\mathbb{R}^{n+1}\rightarrow\mathbb{C}$ is a function in $L_{t,x}^2(|V|^{-1})$.
\end{prop}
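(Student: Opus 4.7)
The plan is to reduce \eqref{up} to a homogeneous weighted smoothing estimate for the free propagator of the form
\begin{equation*}
\|e^{it\Delta} u\|_{L^2_{t,x}(|V|)}^2 \leq C\,C(V)\,\|u\|_{L^2(\mathbb{R}^n)}^2,
\end{equation*}
and then to prove this latter estimate in two independent ways: concretely via Fourier restriction and Corollary \ref{rest}, and abstractly via Kato's $H$-smoothing theorem. The reduction is the standard $TT^\ast$ argument. Introducing $S:L^2(\mathbb{R}^n)\to L^2(\mathbb{R}^{n+1})$ by $Su(x,t)=|V(x)|^{1/2}e^{it\Delta}u(x)$, one checks directly that $\|Su\|_{L^2_{t,x}}=\|e^{it\Delta}u\|_{L^2_{t,x}(|V|)}$ and that, after the substitution $F=|V|^{1/2}G$, the operator-norm identity $\|SS^\ast\|_{L^2_{t,x}\to L^2_{t,x}}=\|S\|^2$ becomes precisely the inequality \eqref{up} with constant $\|S\|^2$. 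Thus \eqref{up} is equivalent to $\|S\|^2\leq CC(V)$, which is the displayed homogeneous estimate.

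For the Fourier-restriction route I would apply Plancherel in the time variable. The $t$-Fourier transform of $H(x,t):=e^{it\Delta}u(x)$, computed from its spatial Fourier representation, is a distribution supported on the paraboloid $\tau=-|\xi|^2$, and on each slice $\tau<0$ it coincides, up to the factor $1/(2\sqrt{-\tau})$, with the Fourier extension $(\widehat{u}|_{S_{\sqrt{-\tau}}^{n-1}}\,d\sigma_{\sqrt{-\tau}})^\vee$. The weighted extension estimate \eqref{43} then bounds each slice in $L^2_x(|V|)$ with constant proportional to $(-\tau)^{-1/4}C(V)^{1/2}$. Integrating in $\tau$, changing variables $\tau=-r^2$, and applying spatial Plancherel on $\mathbb{R}^n$ then collapses the right-hand side to $CC(V)\|u\|_{L^2}^2$, which is the desired homogeneous estimate.

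For the abstract route I would reinterpret \eqref{ure0} as the uniform operator bound $\|AR_0(z)A\|_{L^2\to L^2}\leq C(V)$ with $A:=|V|^{1/2}$ regarded as a multiplication operator, which immediately yields the Kato criterion $\sup_{z\notin\mathbb{R}}\|A(R_0(z)-R_0(\bar z))A\|\leq 2C(V)$. Kato's $H$-smoothing theorem, applied to $H=-\Delta$, then gives $\int_{\mathbb{R}}\|Ae^{it\Delta}u\|_{L^2_x}^2\,dt\leq CC(V)\|u\|_{L^2}^2$, the desired homogeneous estimate. In fact Kato's theorem comes with an equivalent bilinear formulation that produces \eqref{up} in one step, absorbing the $TT^\ast$ reduction into its own proof.

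I do not expect a conceptual obstacle in any of these arguments, since Corollary \ref{rest} and Kato's theorem are both at our disposal here. The only real care needed is in the Fourier-restriction route, where the $t$-Fourier transform of $H$ is a singular distribution on the paraboloid and the slice-wise application of \eqref{43} must be interpreted correctly; I would handle this by first proving the homogeneous estimate for $u\in\mathcal{S}(\mathbb{R}^n)$ and then extending by density in the relevant weighted $L^2$ spaces, exactly as in the proof of Proposition \ref{wlap}.
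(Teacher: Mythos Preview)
Your proposal is correct and matches the paper's own argument essentially step for step: the paper reduces \eqref{up} to the homogeneous smoothing estimate $\|e^{it\Delta}f\|_{L^2_{t,x}(|V|)}\le CC(V)^{1/2}\|f\|_2$ via duality (which is your $TT^\ast$ reduction), and then proves that estimate in two ways---first by Plancherel in $t$, polar coordinates, and the restriction bound \eqref{43} from Corollary~\ref{rest}, and second via Kato's $H$-smoothing theorem with $T=|V|^{1/2}$. The only cosmetic difference is that the paper handles the slice-wise Fourier analysis by the explicit change of variables $r^2=\lambda$ rather than speaking of the distribution on the paraboloid, which avoids the density issue you flagged.
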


The basic approach for this is to obtain the following estimate
\begin{equation}\label{homost}
\big\|e^{it\Delta}f\big\|_{L_{t,x}^2(|V|)}\leq CC(V)^{1/2}\|f\|_2
\end{equation}
which may be referred to as the weighted $L^2$ homogeneous Strichartz estimate.
(In Section \ref{sec8} we will also obtain the corresponding inhomogeneous estimate to study
the well-posedness of the Schr\"odinger equation.)
Indeed, by duality\footnote{Consider the operator $Tf(x,t)=|V(x)|^{1/2}e^{it\Delta}f(x)$. 
Then, $\|Tf\|_{L_{t,x}^2}\leq CC(V)^{1/2}\|f\|_2$ from \eqref{homost}.
Since the adjoint operator of $T$ is now given as 
$T^\ast G(x)=\int_{-\infty}^\infty e^{-is\Delta}(|V(\cdot)|^{1/2}G(\cdot,s))ds$,
by duality, $\|T^\ast G\|_2\leq CC(V)^{1/2}\|G\|_{L_{t,x}^2}$ which is equivalent to \eqref{dual}.} 
this is equivalent to
\begin{equation}\label{dual}
\bigg\|\int_{-\infty}^\infty e^{-is\Delta}F(\cdot,s)ds\bigg\|_2
\leq CC(V)^{1/2}\|F\|_{L_{t,x}^2(|V|^{-1})},
\end{equation}
and so one gets \eqref{up} combining \eqref{homost} and \eqref{dual}.

In this regard, we aim at proving the following lemma.
We shall give two proofs of that.
The first is a concrete one using the weighted $L^2$ restriction estimate given in the previous section.
On the other hand, the second is a more direct one appealing to Kato $H$-smoothing theory.

\begin{lem}\label{str0}
Let $n\geq2$ and let $V:\mathbb{R}^n\rightarrow\mathbb{C}$ be such that
the resolvent estimate \eqref{ure0} holds.
Then we have
\begin{equation}\label{homostr1}
\big\|e^{it\Delta}f\big\|_{L_{t,x}^2(|V|)}\leq CC(V)^{1/2}\|f\|_2.
\end{equation}
\end{lem}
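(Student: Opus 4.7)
The plan is to reduce \eqref{homostr1} by duality to a dual bound on the integrated propagator, and then obtain that dual bound either concretely, using the Fourier restriction estimate of Corollary \ref{rest}, or abstractly, via Kato's $H$-smoothing theorem applied directly to the hypothesis \eqref{ure0}.

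First I would dualize via the $TT^\ast$ principle: setting $T\colon f\mapsto e^{it\Delta}f$ from $L^2(\mathbb{R}^n)$ to $L^2_{t,x}(|V|)$, the estimate \eqref{homostr1} is equivalent to
\[\Bigl\|\int_{-\infty}^\infty e^{-is\Delta}F(\cdot,s)\,ds\Bigr\|_{L^2_x}\le CC(V)^{1/2}\|F\|_{L^2_{t,x}(|V|^{-1})}.\]
Call the integrand $h(x)$ and note that its spatial Fourier transform is $\mathcal{F}_x h(\xi)=\widetilde F(\xi,-|\xi|^2)$, the restriction to the paraboloid of the full space-time Fourier transform $\widetilde F$. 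Plancherel in $x$ combined with polar coordinates $\xi=r\omega$ (using $d\xi=d\sigma_r(\xi)\,dr$) expresses $\|h\|_{L^2_x}^2$ as
\[(2\pi)^{-n}\int_0^\infty\int_{S^{n-1}_r}\bigl|\widetilde F(\xi,-r^2)\bigr|^2\,d\sigma_r(\xi)\,dr.\]
Writing $G_\tau(x):=\int e^{-it\tau}F(x,t)\,dt$ so that $\widetilde F(\xi,-r^2)=\widehat{G_{-r^2}}(\xi)$, the inner integral is controlled by $r\,C(V)\,\|G_{-r^2}\|_{L^2(|V|^{-1})}^2$ thanks to the weighted restriction estimate \eqref{43}. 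The substitution $\tau=r^2$ absorbs the factor of $r$ against the Jacobian $dr=d\tau/(2r)$, and Plancherel in $t$ then converts the $\tau$-integral into a constant multiple of $\|F\|_{L^2_{t,x}(|V|^{-1})}^2$, closing the concrete argument.

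Alternatively, I would appeal to Kato's $H$-smoothing theorem with $A=-\Delta$ on $L^2(\mathbb{R}^n)$ and $B$ multiplication by $|V|^{1/2}$. The hypothesis \eqref{ure0} rewrites as $\|BR_0(z)B^\ast\|_{L^2\to L^2}\le C(V)$ uniformly in $z\in\mathbb{C}\setminus\mathbb{R}$, which is exactly Kato's criterion for $B$-smoothness of $A$; the resulting bound $\int\|Be^{it\Delta}f\|_{L^2}^2\,dt\le 2\pi\,C(V)\|f\|_{L^2}^2$ is precisely \eqref{homostr1}.

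The main obstacle in the concrete route is the careful bookkeeping of the powers of $r$ across the foliation of $\mathbb{R}^n$ by spheres and the change of variables $\tau=r^2$, so that the $r$-factor in \eqref{43} exactly cancels the Jacobian and the estimate becomes uniform in $r$. The abstract route bypasses this at the cost of invoking the full $H$-smoothing machinery, but since \eqref{ure0} is already phrased in exactly the form Kato's theorem requires, the lemma reduces essentially to an application of a classical spectral-theoretic result.
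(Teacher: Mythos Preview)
Your proposal is correct and matches the paper's two proofs closely. The abstract route via Kato $H$-smoothing is identical to the paper's alternate proof. For the concrete route, the paper works directly on $e^{it\Delta}f$ via polar coordinates and Plancherel in $t$, applying the \emph{extension} estimate \eqref{43}, whereas you dualize first and apply the \emph{restriction} estimate to $G_{-r^2}$; these are the two sides of the same $TT^\ast$ argument, so the approach is essentially the same. One small slip: the bound you invoke on $\int_{S_r^{n-1}}|\widehat{G_{-r^2}}|^2\,d\sigma_r$ is the second (unlabeled) estimate in Corollary~\ref{rest}, not \eqref{43} itself---though the corollary declares all three equivalent, so this is only a citation issue.
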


\begin{rem}
The estimate \eqref{homostr1} was first obtained implicitly in \cite{RV2}
for Fefferman-Phong potentials $V\in\mathcal{L}^{2,p}$, $p>(n-1)/2$, $n\geq3$,
and this was extended in \cite{BBRV} to the Kerman-Saywer class $\mathcal{K}_2$ only for $n=3$.
By combining the resolvent estimates in Theorem \ref{thm2.3} and this lemma,
we can extend these previous results to the class $\mathcal{S}_n$.
\end{rem}

\begin{proof}[Proof of Lemma \ref{str0}]
The proof is quite standard and based on the weighted $L^2$ restriction estimate given in the previous section.
Indeed, using polar coordinates and changing variables $r^2=\lambda$, one can see that
\begin{align*}
e^{it\Delta}f&=\int_0^\infty e^{-itr^2}\int_{S_r^{n-1}}e^{ix\cdot\xi}\widehat{f}(\xi)d\sigma_r(\xi)dr\\
&=\frac12\int_0^\infty e^{-it\lambda}\int_{S_{\sqrt{\lambda}}^{n-1}}
e^{ix\cdot\xi}\widehat{f}(\xi)d\sigma_{\sqrt{\lambda}}(\xi)\lambda^{-1/2}d\lambda.
\end{align*}
By Plancherel's theorem in $t$, it follows now that
\begin{align*}
\big\|e^{it\Delta}f\big\|_{L_{t,x}^2(|V|)}^2&\leq
C\int_{\mathbb{R}^n}\bigg(\int_0^\infty\bigg|\int_{S_{\sqrt{\lambda}}^{n-1}}
e^{ix\cdot\xi}\widehat{f}(\xi)d\sigma_{\sqrt{\lambda}}(\xi)\bigg|^2\lambda^{-1}d\lambda\bigg)|V(x)|dx\\
&\leq C\int_0^\infty\bigg(\int_{\mathbb{R}^n}
\bigg|\int_{S_r^{n-1}}e^{ix\cdot\xi}\widehat{f}(\xi)d\sigma_r(\xi)\bigg|^2|V(x)|dx\bigg)r^{-1}dr.
\end{align*}
Combining this and the estimate \eqref{43} in Corollary \ref{rest}, one gets
\begin{align*}
\big\|e^{it\Delta}f\big\|_{L_{t,x}^2(|V|)}^2&\leq
CC(V)\int_0^\infty\int_{S_r^{n-1}}|\widehat{f}(\xi)|^2d\sigma_r(\xi)dr\\
&=CC(V)\|f\|_2^2
\end{align*}
as desired.
\end{proof}

Now we derive \eqref{homostr1} more directly from the resolvent estimate
appealing to Kato $H$-smoothing theory.
The notion of $H$-smoothing due to Kato \cite{K} was first appeared in the context of scattering theory,
and its usefulness for dispersive equations was revealed in some recent works (\cite{RoS,D'AF}).
We shall use a version of the notion to suit our purpose.

Let $H$ be a self-adjoint operator in a Hilbert space $\mathcal{H}$,
whose resolvent is $R_H(z)=(H-z)^{-1}$ for $z\in\mathbb{C}\setminus\mathbb{R}$.
Let $T$ be a densely defined, closed operator on $\mathcal{H}$ with domain $D(T)$.
Then the following lemma due to Kato \cite{K} (see also \cite{RS3}, XIII.7)
allows us to employ the resolvent estimate for $H$ to obtain a space-time estimate for the Schr\"odinger propagator.

\begin{lem}\label{lem4.4}
Assume that there exists a constant $\widetilde{C}>0$ such that for $g\in D(T^\ast)$
\begin{equation}\label{ksm}
\sup_{z\in\mathbb{C}\setminus\mathbb{R}}\|TR_H(z)T^\ast g\|_{\mathcal{H}}\leq\widetilde{C}\|g\|_{\mathcal{H}}.
\end{equation}
Then the operator $T$ is $H$-smooth, i.e.,
$e^{itH}f\in D(T)$ for $f\in\mathcal{H}$ and almost every $t$, and
$$\int_{\mathbb{R}}\|Te^{itH}f\|_{\mathcal{H}}^2dt\leq\widetilde{C}\|f\|_{\mathcal{H}}^2.$$
\end{lem}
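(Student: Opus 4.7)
The plan is to use the $TT^*$ method together with Plancherel's theorem in time and Stone's formula for the spectral family of $H$. Dualizing, the claimed bound $\int_\mathbb{R}\|Te^{itH}f\|_\mathcal{H}^2\,dt \le \widetilde{C}\|f\|_\mathcal{H}^2$ is equivalent to
\begin{equation*}
\bigg\|\int_\mathbb{R} e^{-itH}T^*F(t)\,dt\bigg\|_\mathcal{H}^2 \le \widetilde{C}\|F\|_{L^2(\mathbb{R};\mathcal{H})}^2
\end{equation*}
for $F$ taking values in $D(T^*)$. Squaring the left-hand side via the inner product turns this into a bilinear estimate whose kernel is a time-convolution with the operator-valued function $\tau \mapsto Te^{i\tau H}T^*$, so the whole problem reduces to bounding this convolution operator on $L^2(\mathbb{R};\mathcal{H})$.

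By Plancherel in the time variable, that convolution bound is equivalent to a uniform operator-norm bound on the Fourier transform of the kernel in $\tau$. Formally the Fourier transform is $2\pi\,T\,dE(\lambda)\,T^*$, where $E$ is the spectral family of $H$, and Stone's formula
\begin{equation*}
dE(\lambda) = \frac{1}{2\pi i}\bigl(R_H(\lambda + i0) - R_H(\lambda - i0)\bigr)\,d\lambda
\end{equation*}
brings the hypothesis \eqref{ksm} directly into play. To make this rigorous without presupposing that boundary values of $R_H$ exist, I would regularize by replacing the kernel with $e^{-\varepsilon|\tau|}Te^{i\tau H}T^*$; the standard resolvent representation $R_H(z) = \pm i\int_0^\infty e^{izt}e^{\mp itH}\,dt$ for $\pm\operatorname{Im}z > 0$ shows that its Fourier transform in $\tau$ equals $i\bigl(TR_H(\lambda - i\varepsilon)T^* - TR_H(\lambda + i\varepsilon)T^*\bigr)$, and by \eqref{ksm} this has operator norm at most $2\widetilde{C}$ uniformly in $\varepsilon$. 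Letting $\varepsilon \to 0^+$ and invoking dominated convergence recovers the bound for the unregularized kernel up to absolute constants; the precise constant $\widetilde{C}$ advertised in the lemma is then obtained by a more careful bookkeeping with the quadratic-form version of Stone's formula.

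Once the space-time estimate holds on a dense subspace (for instance for $f$ of the form $T^*g$, or more generally by density), the full conclusion — namely $e^{itH}f \in D(T)$ for a.e.\ $t$ and every $f \in \mathcal{H}$, with the desired bound — follows from the closedness of $T$ together with a standard Cauchy-sequence argument in $L^2(\mathbb{R};\mathcal{H})$. The main obstacle I anticipate is the rigorous identification of the Fourier transform of $Te^{i\tau H}T^*$ with $2\pi\,T\,dE(\lambda)\,T^*$: because $T$ is only densely defined and closed, $e^{i\tau H}T^*g$ need not lie in $D(T)$ for every $\tau$, so the spectral-theoretic formula must be interpreted in a weak sense. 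The $\varepsilon$-regularized resolvent device above is precisely what circumvents this difficulty and lets the uniform-in-$z$ hypothesis \eqref{ksm} — which says nothing directly about genuine boundary values on $\mathbb{R}$ — do the work.
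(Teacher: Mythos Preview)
The paper does not prove this lemma; it is quoted without proof as a known result of Kato~\cite{K} (see also Reed--Simon~\cite{RS3}, \S XIII.7). Your outline---dualize via $TT^*$, realize $SS^*$ as time-convolution with kernel $\tau\mapsto Te^{i\tau H}T^*$, regularize by $e^{-\varepsilon|\tau|}$, and identify the temporal Fourier transform with $i\bigl(TR_H(\lambda-i\varepsilon)T^*-TR_H(\lambda+i\varepsilon)T^*\bigr)$ using the Laplace representation of the resolvent---is exactly the classical argument found in those references, so your approach is correct. Regarding the sharp constant you flagged: observe that $\hat K_\varepsilon(\lambda)=i\,T\bigl(R_H(\lambda-i\varepsilon)-R_H(\lambda+i\varepsilon)\bigr)T^*$ is self-adjoint and nonnegative, so its norm is the supremum of its quadratic form $g\mapsto 2\operatorname{Im}\langle TR_H(\lambda+i\varepsilon)T^*g,g\rangle$, and one recovers the stated $\widetilde C$ (rather than $2\widetilde C$) by working at the level of quadratic forms throughout, as in Kato's original paper.
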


\begin{proof}[Alternate proof of Lemma \ref{str0}]
Now we appeal to this lemma in order to prove Lemma \ref{str0}.
By replacing $f=|V|^{1/2}g$ in the resolvent estimate \eqref{ure0} we see that
\begin{equation*}
\sup_{z\in\mathbb{C}\setminus\mathbb{R}}\||V|^{1/2}R_0(z)|V|^{1/2}g\|_{L^2}\leq C(V)\|g\|_{L^2}.
\end{equation*}
Hence, applying Lemma \ref{lem4.4} with  $H=-\Delta$, $\mathcal{H}=L^2$, and $T:f\mapsto|V|^{1/2}f$,
we get immediately the desired estimate \eqref{homostr1}.
In doing so, it seems to be more or less complicated to check that the multiplication operator
$T:f\mapsto|V|^{1/2}f$ is a densely defined, closed operator on $L^2$
with $D(T)=\{f\in L^2:Tf\in L^2\}$.
Let us now check this.
First, it is easy to see that $D(T)$ is dense in $L^2$.
In fact, consider $D_n=\{x\in\mathbb{R}^n:|V|^{1/2}\leq n\}$.
Then, for $f\in L^2$ the function $\chi_{D_n}f$ is contained in $D(T)$,
and $\chi_{D_n}f\rightarrow f$ as $n\rightarrow\infty$.
By the Lebesgue dominated convergence theorem, it follows now that $\chi_{D_n}f\rightarrow f$ in $L^2$.
So, $D(T)$ is dense in $L^2$.
Next, note that since $|V|^{1/2}$ is real, $T$ is trivially self-adjoint, i.e., $T=T^{\ast}$,
and so $T^\ast$ is also dense in $L^2$.
From these facts, $T$ is closable and its closure can be given by $\overline{T}=T^{\ast\ast}$.
(See, for example, Theorem VIII.1 in \cite{RS}.)
Thus, $\overline{T}=T^{\ast\ast}=T^\ast=T$. That is, $T$ is closed.
\end{proof}

\section{Carleman estimates}\label{sec5}

This section is devoted to proving the following abstract Carleman estimate
which means that a potential for the resolvent estimate
can be made to work for the Carleman estimate in an abstract way.

\begin{prop}\label{prop}
Let $n\geq2$ and
let $[V]$ be the least constant $C(V)$ for which the resolvent estimate
\begin{equation}\label{ure55}
\|R_0(z)f\|_{L^2(|V|)}\leq C(V)\|f\|_{L^2(|V|^{-1})}
\end{equation}
holds uniformly in $z\in\mathbb{C}\setminus\mathbb{R}$.
Then we have for $u\in C_0^\infty(\mathbb{R}^{n+1})$
\begin{equation}\label{Carl}
\big\|e^{\beta\langle(x,t),\nu\rangle}u\big\|_{L_{t,x}^2(|V(x)|)} \leq
C[V]\big\|e^{\beta\langle(x,t),\nu\rangle}(i\partial_t+\Delta)u\big\|_
{L_{t,x}^2(|V(x)|^{-1})}
\end{equation}
with a constant $C$ independent of $\beta\in\mathbb{R}$ and $\nu\in\mathbb{R}^{n+1}$,
if $|V|\in A_2(\nu^\prime)$.
Here, recall that $\nu^\prime=(\nu_1,...,\nu_n)$ for $\nu=(\nu_1,...,\nu_n,\nu_{n+1})$.
\end{prop}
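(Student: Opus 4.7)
The plan is to derive \eqref{Carl} from the weighted $L^2$ propagator estimate \eqref{up} of Proposition~\ref{istr}, via conjugation followed by a contour-shift construction of the inverse of the conjugated operator. First, setting $v := e^{\beta\langle(x,t),\nu\rangle}u$ for $u \in C_0^\infty(\mathbb{R}^{n+1})$, a direct calculation gives
\[
e^{\beta\langle(x,t),\nu\rangle}(i\partial_t+\Delta)u = L_\beta v, \qquad L_\beta := i\partial_t+\Delta-2\beta\nu'\cdot\nabla+\beta^2|\nu'|^2-i\beta\nu_{n+1},
\]
reducing \eqref{Carl} to the weight-only inequality $\|v\|_{L^2_{t,x}(|V|)} \leq C[V]\|L_\beta v\|_{L^2_{t,x}(|V|^{-1})}$ for $v \in C_0^\infty$.

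To invert $L_\beta$ I would pass to the space-time Fourier side. Its symbol factors as $p_\beta(\tau,\xi) = -(\tau - z_\beta(\xi))$ with $z_\beta(\xi) := -|\xi|^2 - 2i\beta\nu'\cdot\xi + \beta^2|\nu'|^2 - i\beta\nu_{n+1}$, and $\operatorname{Im} z_\beta(\xi) = -\beta(2\nu'\cdot\xi + \nu_{n+1})$ is sign-definite on each half-space $H_\pm := \{\xi : \pm\beta(2\nu'\cdot\xi+\nu_{n+1})>0\}$. Splitting $\widehat{v} = \chi_{H_+}\widehat{v} + \chi_{H_-}\widehat{v}$ and using the identities
\[
\frac{1}{\tau-z_\beta(\xi)} = \mp i\int_0^{\pm\infty} e^{is(\tau-z_\beta(\xi))}\,ds \qquad (\xi \in H_\pm),
\]
together with the factorization $e^{-isz_\beta(\xi)} = e^{-s\beta\nu_{n+1}}\,e^{-is\beta^2|\nu'|^2}\,e^{is|\xi|^2}\,e^{-2s\beta\nu'\cdot\xi}$, Fourier inversion produces a representation of $v = v_+ + v_-$ as a sum of half-line $s$-integrals in which the free Schr\"odinger symbol $e^{is|\xi|^2}$ is multiplied by the $\xi$-multiplier $e^{-s\beta\nu_{n+1}}e^{-2s\beta\nu'\cdot\xi}\chi_{H_\pm}(\xi)$. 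The damping factor $e^{-s\beta\nu_{n+1}}$ is placed so that the combined multiplier is uniformly bounded by~$1$ on $H_\pm$ for $s$ in the relevant half-line (since there the exponent reduces to $-s\,\beta(2\nu'\cdot\xi+\nu_{n+1})\leq 0$).

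After a change of variable in $s$ returning to position space, each $v_\pm$ takes the schematic form of a one-sided Duhamel integral
\[
v_\pm(x,t) \;=\; c_\pm \int_{I_\pm(t)} e^{i(t-s)\Delta}\,\mathcal{T}_\pm F(\cdot,s)(x)\,ds, \qquad F := L_\beta v,
\]
where $I_\pm(t)$ is a half-line (retarded or advanced) in~$s$ and $\mathcal{T}_\pm$ is an $x$-Fourier multiplier that encodes the bounded $\xi$-multiplier above together with the Hardy-type half-space projection $\chi_{H_\pm}$ in the $\nu'$-direction. At this point the hypothesis $|V|\in A_2(\nu')$ is crucial: by the one-dimensional Muckenhoupt theorem applied slice-by-slice to fibers parallel to $\nu'$, the Hardy projection is bounded on $L^2(|V|^{-1})$ with norm depending only on the $A_2(\nu')$ constant of~$|V|$, and hence so is $\mathcal{T}_\pm$, uniformly in~$s$.

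Finally, applying the weighted propagator estimate \eqref{up} to the $s$-integral (after a one-sided reduction, e.g.\ a Christ--Kiselev-type argument, to match the two-sided convolution in \eqref{up}) yields
\[
\|v_\pm\|_{L^2_{t,x}(|V|)} \;\leq\; C[V]\,\|\mathcal{T}_\pm F\|_{L^2_{t,x}(|V|^{-1})} \;\leq\; C[V]\,\|F\|_{L^2_{t,x}(|V|^{-1})},
\]
and summing over $\pm$ gives \eqref{Carl}, uniformly in~$\beta$ and~$\nu$. The main obstacle will be making the contour-shift representation of $v_\pm$ rigorous in the weighted $L^2$ framework, including the Paley--Wiener-type complex translations in the $\nu'$-direction implicit in the factorization of $e^{-isz_\beta(\xi)}$ above; this is precisely where $A_2(\nu')$ enters, and where the one-dimensional nature of the condition is exploited (only Hilbert-type singular integrals along $\nu'$ appear). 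The degenerate parameter cases $\beta=0$, $\nu_{n+1}=0$, or $|\nu'|=0$ either reduce to \eqref{up} directly or follow by a limiting argument using the uniform-in-$(\beta,\nu)$ character of the bounds.
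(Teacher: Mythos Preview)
Your conjugation step and reduction to inverting $L_\beta$ match the paper, but the core of your argument has a genuine gap. After you write
\[
\frac{1}{\tau - z_\beta(\xi)} = \mp i\int_0^{\pm\infty} e^{is(\tau - z_\beta(\xi))}\,ds
\]
and factorize $e^{-isz_\beta(\xi)}$, the resulting $\xi$-multiplier $e^{-s\beta\nu_{n+1}}e^{-2s\beta\nu'\cdot\xi}\chi_{H_\pm}(\xi)$ \emph{depends on $s$}. Hence, after the change of variable you describe, the Duhamel integrand is of the form $e^{i(t-s)\Delta}\mathcal{T}_\pm^{\,t-s}F(\cdot,s)$, with the multiplier depending on $t-s$ and thus on $t$. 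This is not of the form $e^{i(t-s)\Delta}G(\cdot,s)$ with $G$ independent of $t$, so Proposition~\ref{istr} (or its retarded analogue) does not apply as stated. The uniform boundedness of $\mathcal{T}_\pm^{\,s}$ on $L^2(|V|^{-1})$ (which is true under $A_2(\nu')$, as you note) is not enough: one would need either a fixed-time bound $\|e^{-is\Delta}g\|_{L^2_x(|V|)}\lesssim\|g\|_{L^2_x(|V|^{-1})}$ (false) or a way to pull the $s$-dependent multiplier through the propagator, which returns you to the original symbol. Also, the Christ--Kiselev lemma you invoke for the one-sided reduction fails precisely at the $L^2\to L^2$ endpoint relevant here.

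The paper circumvents exactly this obstruction by a different mechanism. After the same conjugation and a further reduction to $P(D)=\partial_{x_n}$, it performs a Littlewood--Paley decomposition in $\xi_n$ and, on each dyadic piece $|\xi_n|\sim2^{-k}$, \emph{freezes} $i\xi_n$ at the constant $i2^{-k}$ so that the frozen symbol $(\tau+|\xi|^2+i2^{-k})^{-1}$ is literally the free resolvent at $z=-\tau-i2^{-k}$ and estimate \eqref{ure55} applies directly after Plancherel in $t$. The error term, which carries a genuinely bounded Mikhlin multiplier in $\xi_n$ (with bounds uniform in $k$), is then controlled by Proposition~\ref{istr} combined with the weighted Marcinkiewicz multiplier theorem; the dyadic pieces are reassembled via the weighted Littlewood--Paley theorem. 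Both of the latter steps are where the $A_2(\nu')$ hypothesis enters. If you want to salvage your approach, you would need to replace the half-space split by a mechanism that makes the imaginary part of the symbol constant in $\xi$ on each piece---which is essentially what the Littlewood--Paley freezing accomplishes.
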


\begin{rem}
The assumption $|V|\in A_2(\nu^\prime)$
is the only one needed in order to derive \eqref{Carl} from \eqref{ure55}.
Once a suitable class $\mathcal{R}$ is established,
\eqref{Carl} holds for $V\in\mathcal{R}$ with $[V]$ replaced by $[V]_{\mathcal{R}}$.
In this concrete setting there are some classes of $\mathcal{R}$
where the $A_2$ assumption is superfluous.
See a comment below the proof of Proposition \ref{pr00} for details.
\end{rem}

To obtain \eqref{Carl}, we first convert it into the following form
which seems at first glance to be easier to handle:
\begin{equation}\label{sobol}
\|u\|_{L_{t,x}^2(|V(x)|)}\leq C[V]\|(i\partial_t+\Delta+P(D))u\|_{L_{t,x}^2(|V(x)|^{-1})},
\end{equation}
where $P(D)=-2\beta\langle\nu',\nabla\rangle+\beta^2|\nu'|^2-i\beta\nu_{n+1}$
arises from a computation of the conjugated operator $e^{\beta\langle(x,t),\nu\rangle}(i\partial_t+\Delta)e^{-\beta\langle(x,t),\nu\rangle}$.
Note that $e^{\beta\langle(x,t),\nu\rangle}$ no longer appears explicitly in this form
although it affects $P(D)$.
The key point is that $P(D)$ can be controlled uniformly in $\beta$, $\nu$ by decomposing
$(i\partial_t+\Delta+P(D))^{-1}$ in the phase space,
along with the corresponding direction of the vector $\nu^\prime$,
into a number of localized pieces which can be made in a sensible way to behave like the resolvent $(-\Delta-z)^{-1}$.
These pieces will then be estimated and recombined successfully on weighted $L^2$ spaces.
The $A_2$ assumption comes into play at this step.

In fact we obtain the Sobolev type inequality \eqref{sobol} in the following more general setting:

\begin{prop}\label{pr00}
Let $n\geq2$ and let $P(D)$ be a first-order differential operator given by
$P(D)=\langle \vec{c},\nabla\rangle+z$,
where $\vec{c}=\vec{a}+i\vec{b}\in\mathbb{C}^n$ with $\vec{a},\vec{b}\in\mathbb{R}^n$,
and $z\in\mathbb{C}$.
%Assume that $V:\mathbb{R}^n\rightarrow\mathbb{C}$ is such that the resolvent estimate \eqref{resol} holds.
Then we have for $u\in C_0^\infty(\mathbb{R}^{n+1})$
\begin{equation}\label{sob000}
\|u\|_{L_{t,x}^2(|V(x)|)}\leq C[V]
\|(i\partial_t+\Delta+P(D))u\|_{L_{t,x}^2(|V(x)|^{-1})}
\end{equation}
with a constant $C$ independent of $\vec{c}$ and $z$, if $|V|\in A_2(\vec{a})$.
\end{prop}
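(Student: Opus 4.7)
The plan is to convert \eqref{sob000} into a family of spatial resolvent-type bounds via Plancherel in $t$, then conjugate away the imaginary part of $\vec c$ by a unitary multiplier, and finally perform a phase-space decomposition in the direction of $\vec a$ so that on each localized piece the operator behaves like a standard resolvent $(-\Delta-z)^{-1}$; the $A_2(\vec a)$ hypothesis is precisely what will allow the pieces to be reassembled.

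First I would apply the partial Fourier transform in $t$ (with dual variable $\tau$), so that Plancherel reduces \eqref{sob000} to the uniform-in-$\tau$ spatial estimate
\[
\|\widehat u(\cdot,\tau)\|_{L^2(|V|)}\le C[V]\,\|(\Delta+\langle\vec c,\nabla\rangle+w)\widehat u(\cdot,\tau)\|_{L^2(|V|^{-1})},\qquad w:=z-\tau\in\mathbb{C}.
\]
Next I would substitute $\widehat u=e^{-i\langle\vec b/2,x\rangle}W$; since the multiplier has modulus one, both weighted norms are preserved, and a direct computation gives
\[
(\Delta+\langle\vec c,\nabla\rangle+w)\widehat u = e^{-i\langle\vec b/2,x\rangle}\bigl(\Delta+\langle\vec a,\nabla\rangle+w'\bigr)W,\quad w':=w+\tfrac{|\vec b|^2}{4}-\tfrac{i}{2}\langle\vec a,\vec b\rangle.
\]
So it remains to show, uniformly over real $\vec a$ (with $|V|\in A_2(\vec a)$) and $w'\in\mathbb{C}$, that $\|W\|_{L^2(|V|)}\le C[V]\|(\Delta+\langle\vec a,\nabla\rangle+w')W\|_{L^2(|V|^{-1})}$. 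A rotation placing $\vec a=ae_n$ with $a\ge 0$ converts the $A_2(\vec a)$ hypothesis into the one-dimensional $A_2$ condition on $|V|$ (equivalently on $|V|^{-1}$) in $x_n$, uniformly in $x'$.

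When $a=0$ the reduced operator is $-(-\Delta-w')$ and the bound is the resolvent estimate \eqref{ure55} itself for $w'\in\mathbb{C}\setminus\mathbb{R}$, extending to real $w'$ via the weak limiting absorption principle (Proposition \ref{wlap}). For $a>0$, writing $w'=\alpha+i\beta$ and $\eta_0=-\beta/a$, the symbol is $-|\xi|^2+ia(\xi_n-\eta_0)+\alpha$, which vanishes only on $\{\xi_n=\eta_0\}\cap\{|\xi|^2=\alpha\}$. I would then introduce a Littlewood--Paley partition $\{\phi_k\}_{k\in\mathbb{Z}}$ in $x_n$ adapted to this bad frequency, supported on dyadic bands $|\xi_n-\eta_0|\sim 2^k$, with multipliers $P_k$; on each band (subdivided further if needed) the operator is matched to a translated resolvent $-\Delta-z_k$ with $z_k\in\mathbb{C}\setminus\mathbb{R}$ and $|\mathrm{Im}\,z_k|\sim a\cdot 2^k$, up to a perturbation strictly dominated by the model symbol, and \eqref{ure55} applied to $R_0(z_k)$ yields $\|P_kW\|_{L^2(|V|)}\le C[V]\|P_kF\|_{L^2(|V|^{-1})}$ uniformly in $k$, where $F=(\Delta+\langle\vec a,\nabla\rangle+w')W$.

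The reassembly is then standard: the fiberwise weighted Littlewood--Paley square function theorem—valid because $|V|$ and $|V|^{-1}$ satisfy the one-dimensional $A_2$ condition in $x_n$—gives
\[
\|W\|_{L^2(|V|)}^2\sim\sum_k\|P_kW\|_{L^2(|V|)}^2,\qquad \|F\|_{L^2(|V|^{-1})}^2\sim\sum_k\|P_kF\|_{L^2(|V|^{-1})}^2,
\]
and summation finishes the argument. The hard part, as I see it, is the matching step above: one must choose the Littlewood--Paley partition (and a possibly finer sub-decomposition within each annulus) so that the discrepancy between $P_k(-\Delta-\langle\vec a,\nabla\rangle-w')$ and the model $P_k(-\Delta-z_k)$ is genuinely negligible relative to the model symbol on the localized frequencies, and the resulting errors must then be absorbed—via the resolvent bound at slightly perturbed parameters—uniformly in $k$, $a$, and $w'$. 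This careful symbolic calculus, interfaced with the $A_2$ machinery that reassembles the pieces, is the technical heart of the argument.
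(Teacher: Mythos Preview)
Your reductions (conjugating away $\vec b$, rotating $\vec a$ to $e_n$, Littlewood--Paley in $\xi_n$, and reassembling via the one-dimensional weighted square-function theorem) match the paper's skeleton exactly. The gap is in the ``matching step,'' and it is real.

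Once you apply Plancherel in $t$ you are committed to a \emph{purely spatial} bound for each fixed $\tau$. On a band $|\xi_n-\eta_0|\sim 2^k$ you propose to replace the symbol $m(\xi)=-|\xi|^2+\alpha+ia(\xi_n-\eta_0)$ by a model $m_0(\xi)=-|\xi|^2+\alpha+iac_k$ and absorb the remainder. But the remainder $m_0-m=ia(c_k-(\xi_n-\eta_0))$ has the \emph{same} size as $\mathrm{Im}\,m_0$ on the band, so the correction multiplier $\dfrac{m_0-m}{m\,m_0}$ is of order $[V]$-comparable, not small; more importantly, it depends on the full variable $\xi$ through $|\xi|^2$, not just on $\xi_n$. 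The only multiplier theorem your hypothesis $|V|\in A_2(\vec a)$ gives you is \emph{one-dimensional} (in $x_n$), so you have no way to bound this correction on $L^2(|V|)$ or $L^2(|V|^{-1})$. Nor can you iterate: the resolvent estimate sends $L^2(|V|^{-1})$ to the \emph{different} space $L^2(|V|)$, so there is no Neumann-series mechanism to absorb an $O(1)$ perturbation ``at slightly perturbed parameters.'' Finer subdivision of the band does not help, because the obstruction is the $|\xi'|$-dependence of the correction, not its size.

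The paper avoids this by \emph{not} freezing $\tau$. It keeps the full space--time multiplier $(\tau+|\xi|^2+i\xi_n)^{-1}$, performs the Littlewood--Paley cut in $\xi_n$, writes on each band
\[
\frac{1}{\tau+|\xi|^2+i\xi_n}=\frac{1}{\tau+|\xi|^2+i2^{-k}}+i\,\frac{2^{-k}-\xi_n}{(\tau+|\xi|^2+i\xi_n)(\tau+|\xi|^2+i2^{-k})},
\]
handles the first term by Plancherel in $t$ and the resolvent bound (exactly your main term), but treats the second term with the \emph{space--time} propagator estimate of Proposition~\ref{istr}: after the change of variable $\rho=\tau+|\xi|^2$ it becomes $\int e^{it\rho}\,e^{i(t-s)\Delta}(\cdots)\,d\rho$, and Minkowski in $\rho$ together with the one-dimensional Marcinkiewicz bound on $m_{k,\rho}(\xi_n)=\dfrac{2^{-k}-\xi_n}{(\rho+i\xi_n)(\rho+i2^{-k})}$ (now a genuine function of $\xi_n$ alone) and $\int \frac{2^{-k}}{\rho^2+2^{-2k}}\,d\rho<\infty$ finish the piece uniformly in $k$. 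That use of the extra variable $\tau$ (equivalently, of Kato smoothing $\|e^{it\Delta}f\|_{L^2_{t,x}(|V|)}\le C[V]^{1/2}\|f\|_2$ in its $TT^\ast$ form) is precisely the missing idea in your outline.
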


It had been conjectured that the following estimate holds for some $p\neq2$:
\begin{equation}\label{sob22}
\|u\|_{L_{t,x}^p}\leq C\|(i\partial_t+\Delta+i)u\|_{L_{t,x}^p}.
\end{equation}
Note that the multiplier $1/(-\tau-|\xi|^2+i)$ associated with $(i\partial_t+\Delta+i)^{-1}$ is bounded,
and so \eqref{sob22} is trivially satisfied for $p=2$ by Plancherel's theorem.
Interest in \eqref{sob22} came from the work of Calder\'{o}n \cite{C} concerning the $L^p$ boundedness
of Fourier multipliers given by bounded rational functions,
and it was shown in \cite{KeT} that the conjecture is false, namely, \eqref{sob22} holds only for $p=2$.
Our estimate \eqref{sob000} can be viewed as a weighted $L^2$ version,
with the general first-order term $P(D)$, of \eqref{sob22}.

\begin{proof}[Proof of Proposition \ref{pr00}]
First, note that $[V]$ is invariant under translations $T$ and rotations $R$.
That is, $[V(Tx)]=[V]$ and $[V(Rx)]=[V]$.
It is an elementary matter to check this.
Of course, $[CV(x)]=C[V]$ for constants $C>0$.
Then by elementary arguments, the inequality \eqref{sob000} can be reduced to the following particular case where $P(D)=\partial_{x_n}$:
\begin{equation}\label{special0}
\|u\|_{L_{t,x}^2(|V|)}\leq C[V]
\|(i\partial_t+\Delta+\partial_{x_n})u\|_{L_{t,x}^2(|V|^{-1})}.
\end{equation}
In fact, by setting $u=e^{-i\langle \vec{b}/2,x\rangle}v$, $\vec{b}\in\mathbb{R}^n$,
it follows that
$$(i\partial_t+\Delta+P(D))u=e^{-i\langle \vec{b}/2,x\rangle}
(i\partial_t+\Delta+\langle \vec{c}-i\vec{b},\nabla\rangle+z-i\langle \vec{c},\vec{b}/2\rangle-|\vec{b}/2|^2)v.$$
Hence one may assume that $\vec{c}\in\mathbb{R}^n$.
Similarly, for $z=a+ib\in\mathbb{C}$, by setting $u=e^{iat}v$, one can see that
$$(i\partial_t+\Delta+P(D))u=e^{iat}(i\partial_t+\Delta+\langle \vec{c},\nabla\rangle+ib)v.$$
From this one may also assume that $Re\,z=0$.
Since the Laplacian $\Delta$ is trivially invariant under rotations and so is $[V]$,
by a simple rotation argument the assumption $|V|\in A_2(\vec{a})$ is reduced to
the case where $|V|\in A_2(\mathbb{R})$ in the $x_n$ variable uniformly in other variables,
and $\langle \vec{c},\nabla\rangle=c\partial_{x_n}$ with $c\in\mathbb{R}$.
So far, we have explained how to reduce the matter to the case where $P(D)=c\partial/\partial x_n+ib$
with $c,b\in\mathbb{R}$.
For simplicity of notation, we shall also assume that $c=1$ and $b=0$,
because it does not affect all the arguments in the proof.

Our basic plan for \eqref{special0} is to decompose
the inverse $(i\partial_t+\Delta+\partial_{x_n})^{-1}$ in the Fourier transform side,
along with the $\xi_n$ axis, into a number of localized pieces
which can be made to behave like the resolvent $(-\Delta-z)^{-1}$,
and to estimate and recombine them on weighted $L^2$ spaces
using Proposition \ref{istr} and appealing to real variable theory in \cite{Ku}.

Let us now rewrite \eqref{special0} in the Fourier transform side as
\begin{equation}\label{multi}
\bigg\|\mathcal{F}^{-1}\bigg(\frac{\widehat{f}(\xi,\tau)}{\tau+|\xi|^2+i\xi_n}\bigg)\bigg\|_{L_{t,x}^2(|V|)}
\leq C[V]\|f\|_{L_{t,x}^2(|V|^{-1})}
\end{equation}
for $f\in C_0^\infty(\mathbb{R}^{n+1})$.
Setting $m(\xi,\tau)=(\tau+|\xi|^2+i\xi_n)^{-1}$, we define the multiplier operator $\mathcal{T}$ by
$$\widehat{\mathcal{T}f}(\xi,\tau)=m(\xi,\tau)\widehat{f}(\xi,\tau).$$
For $k\in\mathbb{Z}$, we also set $\phi_k(r)=\phi(2^kr)$ for $\phi\in C_0^\infty(\mathbb{R})$
which is such that $\phi(r)=1$ if $|r|\sim1$, and $\phi(r)=0$ otherwise.
Using this, we decompose $\mathcal{T}$ into the localized operators $\mathcal{T}_k$, $k\in\mathbb{Z}$,
which are given by
$$\widehat{\mathcal{T}_kf}(\xi,\tau)=m(\xi,\tau)\chi_k(\xi_n)\widehat{f}(\xi,\tau).$$
Now, \eqref{multi} becomes equivalent to
\begin{equation}\label{multi00}
\big\|\sum_{k\in\mathbb{Z}}\mathcal{T}_kf\big\|_{L_{t,x}^2(|V|)}
\leq C[V]\|f\|_{L_{t,x}^2(|V|^{-1})}.
\end{equation}
To show this, we assume for the moment that
\begin{equation}\label{multi3}
\|\mathcal{T}_kf\|_{L_{t,x}^2(|V|)}\leq C[V]\|f\|_{L_{t,x}^2(|V|^{-1})}
\end{equation}
with $C$ independent of $k\in\mathbb{Z}$.
Then, by the Littlewood-Paley theorem on weighted $L^2$ spaces with weights in the $A_2$ class
(see Theorem 1 in \cite{Ku}), we can get the desired estimate \eqref{multi00}.
Indeed, since $|V|\in A_2(\mathbb{R})$ in the $x_n$ variable uniformly in other variables,
by the Littlewood-Paley theorem in $x_n$ and using \eqref{multi3} it follows that
\begin{align}\label{wrf}
\nonumber\big\|\sum_k\mathcal{T}_kf\big\|_{L_{t,x}^2(|V|)}
&\leq C\bigg\|\bigg(\sum_{k\in\mathbb{Z}}|\mathcal{T}_kf|^2\bigg)^{1/2}\bigg\|_{L_{t,x}^2(|V|)}\\
\nonumber&=C\bigg(\iint\sum_{k\in\mathbb{Z}}|\mathcal{T}_kf|^2|V| dxdt\bigg)^{1/2}\\
\nonumber&=C\bigg(\sum_{k\in\mathbb{Z}}\|\mathcal{T}_kf\|_{L_{t,x}^2(|V|)}^2\bigg)^{1/2}\\
&\leq C[V]\bigg(\sum_{k\in\mathbb{Z}}\|f_k\|_{L_{t,x}^2(|V|^{-1})}^2\bigg)^{1/2},
\end{align}
where $\widehat{f_k}(\xi,\tau)=\chi_k(\xi_n)\widehat{f}(\xi,\tau)$.
On the other hand, since $|V|\in A_2(\mathbb{R})$ if and only if $|V|^{-1}\in A_2(\mathbb{R})$,
by the Littlewood-Paley theorem again, we see that
\begin{align}\label{poi}
\nonumber\bigg(\sum_{k\in\mathbb{Z}}\|f_k\|_{L_{t,x}^2(|V|^{-1})}^2\bigg)^{1/2}
&=\bigg(\iint\sum_{k\in\mathbb{Z}}|f_k|^2|V|^{-1}dxdt\bigg)^{1/2}\\
\nonumber&=\bigg\|\bigg(\sum_{k\in\mathbb{Z}}|f_k|^2\bigg)^{1/2}\bigg\|_{L_{t,x}^2(|V|^{-1})}\\
&\leq C\|f\|_{L_{t,x}^2(|V|^{-1})}.
\end{align}
Combining \eqref{wrf} and \eqref{poi}, we get the desired estimate \eqref{multi00}.

Now it remains to show the estimate \eqref{multi3}.
Equivalently, we have to show that \eqref{multi} holds for $f\in C_0^\infty(\mathbb{R}^{n+1})$
satisfying $\text{supp}\,\widehat{f}\subset\{(\xi,\tau)\in\mathbb{R}^{n+1}:|\xi_n|\sim2^{-k}\}$,
with the constant $C$ independent of $k\in\mathbb{Z}$.
To do so, we first derive the following bound from the resolvent estimate:
\begin{equation}\label{multi2}
\bigg\|\mathcal{F}^{-1}\bigg(\frac{\widehat{f}(\xi,\tau)}
{\tau+|\xi|^2+i2^{-k}}\bigg)\bigg\|_{L_{t,x}^2(|V|)}
\leq [V]\|f\|_{L_{t,x}^2(|V|^{-1})}.
\end{equation}
In fact, note that
\begin{align*}
\bigg\|\mathcal{F}^{-1}\bigg(\frac{\widehat{f}(\xi,\tau)}
{\tau+|\xi|^2+i2^{-k}}&\bigg)\bigg\|_{L_{t,x}^2(|V|)}^2\\
=&\int |V|\int\bigg|\int e^{it\tau}\bigg(\int e^{ix\cdot\xi}
\frac{\widehat{f}(\xi,\tau)}{\tau+|\xi|^2+i2^{-k}}d\xi\bigg)d\tau\bigg|^2dtdx\\
=&\int |V|\int\bigg|\int e^{ix\cdot\xi}
\frac{\widehat{f}(\xi,\tau)}{\tau+|\xi|^2+i2^{-k}}d\xi\bigg|^2d\tau dx\\
=&\iint\bigg|\int e^{ix\cdot\xi}
\frac{\widehat{f}(\xi,\tau)}{\tau+|\xi|^2+i2^{-k}}d\xi\bigg|^2|V| dxd\tau.
\end{align*}
Here we used Plancheral's theorem in $t$ for the second equality.
Applying the resolvent estimate \eqref{ure55} with $z=-\tau-i2^{-k}$ to the last term in the above
and using Plancheral's theorem in $\tau$,
we get
\begin{align*}
\bigg\|\mathcal{F}^{-1}\bigg(\frac{\widehat{f}(\xi,\tau)}
{\tau+|\xi|^2+i2^{-k}}\bigg)\bigg\|_{L_{t,x}^2(|V|)}^2
&\leq[V]^2\int\big\|\widehat{f(x,\cdot)}(\tau)\big\|_{L_{x}^2(|V|^{-1})}^2d\tau\\
&\leq [V]^2\|f\|_{L_{t,x}^2(|V|^{-1})}^2
\end{align*}
as desired.
Next, we will obtain
\begin{equation}\label{cla}
\bigg\|\mathcal{F}^{-1}\bigg(\frac{(2^{-k}-\xi_n)\widehat{f}(\xi,\tau)}
{(\tau+|\xi|^2+i\xi_n)(\tau+|\xi|^2+i2^{-k})}\bigg)\bigg\|_{L_{t,x}^2(|V|)}
\leq C[V]\|f\|_{L_{t,x}^2(|V|^{-1})}
\end{equation}
under the condition that $\text{supp}\,\widehat{f}\subset\{(\xi,\tau)\in\mathbb{R}^{n+1}:|\xi_n|\sim2^{-k}\}$.
Then, by noting that
$$\frac{1}{\tau+|\xi|^2+i\xi_n}=\frac{1}{\tau+|\xi|^2+i2^{-k}}+
i\frac{(2^{-k}-\xi_n)\widehat{f}(\xi,\tau)}{(\tau+|\xi|^2+i\xi_n)(\tau+|\xi|^2+i2^{-k})},$$
the above two estimates \eqref{multi2} and \eqref{cla} would imply the desired estimate \eqref{multi3}.

For \eqref{cla} we make use of the weighted $L^2$ estimate for the Schr\"odinger propagator
in Proposition \ref{istr}.
By changing variables $\tau+|\xi|^2\rightarrow\rho$ in \eqref{cla},
we need to show that
\begin{equation}\label{multi4}
\bigg\|\int_{\mathbb{R}}e^{it\rho}\int_{\mathbb{R}}e^{i(t-s)\Delta}F(\cdot,s)dsd\rho\bigg\|_{L_{t,x}^2(|V|)}
\leq C[V]\|f\|_{L_{t,x}^2(|V|^{-1})},
\end{equation}
where
$$\widehat{F(\cdot,s)}(\xi)=\frac{(2^{-k}-\xi_n)}{(\rho+i\xi_n)(\rho+i2^{-k})}e^{-is\rho}\widehat{f(\cdot,s)}(\xi).$$
Using Minkowski's inequality and Proposition \ref{istr},
the left-hand side of \eqref{multi4} is bounded by
\begin{equation}\label{dnff}
C[V]\int_{\mathbb{R}}\bigg\|\mathcal{F}^{-1}
\bigg(\frac{2^{-k}-\xi_n}{(\rho+i\xi_n)(\rho+i2^{-k})}e^{-is\rho}\widehat{f(\cdot,s)}(\xi)\bigg)\bigg\|_
{L_{t,x}^2(|V|^{-1})}d\rho.
\end{equation}
Now, let us set
$$m_{k,\rho}(\xi_n)=\frac{2^{-k}-\xi_n}{(\rho+i\xi_n)(\rho+i2^{-k})}.$$
Then it is an elementary matter to check
$$|m_{k,\rho}(\xi_n)|\leq \frac{C2^{-k}}{\rho^2+2^{-2k}}$$
and
$$\bigg|\frac{\partial m_{k,\rho}(\xi_n)}{\partial\xi_n}\bigg|\leq \frac{C}{\rho^2+2^{-2k}},$$
since we are assuming that $\text{supp}\,\widehat{f}\subset\{(\xi,\tau)\in\mathbb{R}^{n+1}:|\xi_n|\sim2^{-k}\}$.
So, $m_{k,\rho}(\xi_n)$ satisfies the conditions of the Marcinkiewicz multiplier theorem.
Since $|V|^{-1}\in A_2(\mathbb{R})$ in the $x_n$ variable uniformly in other variables,
by the multiplier theorem on weighted $L^2$ spaces with weights in the $A_2$ class
(see Theorem 2 in \cite{Ku}), \eqref{dnff} is bounded by
\begin{equation*}
C[V]\int_{\mathbb{R}}\frac{2^{-k}}{\rho^2+2^{-2k}}\|f\|_{L_{t,x}^2(|V|^{-1})}d\rho.
\end{equation*}
Consequently we get \eqref{multi4} since $2^{-k}/(\rho^2+2^{-2k})\in L^1$ uniformly in $k\in\mathbb{Z}$.
This completes the proof.
\end{proof}

\noindent\textbf{Comment on the $A_2$ assumption.}
Given a class $\mathcal{R}$ of resolvent type, the Sobolev type inequality \eqref{sob000} clearly holds
for $V\in\mathcal{R}$ with $[V]$ replaced by $[V]_{\mathcal{R}}$.
We point out that the $A_2$ assumption in \eqref{sob000} is superfluous in the cases of
$\mathcal{R}=L^{n/2},\,L^{n/2,\infty}$, or $\mathcal{L}^{2,p},\,p>(n-1)/2$,
and so the Carleman estimate does not also need that assumption in these cases.
Since $[V]_{\mathcal{R}}$ is given by the usual norm for these cases,
we only need to consider the case of $\mathcal{R}=\mathcal{L}^{2,p}$, $p>(n-1)/2$, because
$\|V\|_{L^{n/2}}\sim\|V\|_{\mathcal{L}^{2,n/2}}$
and $\|V\|_{\mathcal{L}^{2,p}}\leq C\|V\|_{L^{n/2,\infty}}$.
(Recall that $L^{n/2}=\mathcal{L}^{2,p}$ for $p=n/2$ and $L^{n/2,\infty}\subset\mathcal{L}^{2,p}$ if $p<n/2$.)
In this case, due to a good property of $\mathcal{L}^{2,p}$,
more tools are available in removing the $A_2$ assumption ({\it cf. \cite{CS,S3}}).

First, recall from \eqref{ap} the definition of $A_p$ weights for $1<p<\infty$.
Also, $w$ is said to be of class $A_1$ if there is a constant $C_{A_1}$
such that for almost every $x$
\begin{equation*}
M(w)(x)\leq C_{A_1}w(x),
\end{equation*}
where $M(w)$ is the Hardy-Littlewood maximal function of $w$ given by
$$M(w)(x)=\sup_{Q\ni\,x}\frac1{|Q|}\int_Qw(y)dy$$
with cubes $Q$ in $\mathbb{R}^n$.
Then, the following is one of the basic properties of $A_p$ weights ({\it cf. \cite{G}}):
\begin{equation}\label{properties}
A_p\subset A_q,\quad1\leq p<q<\infty,
\end{equation}
and $C_{A_q}$ may be taken to be less than $C_{A_p}$.
It is also known that
\begin{equation}\label{max}
(M(w))^\delta\in A_1,\quad 0<\delta<1,
\end{equation}
with $C_{A_1}$ independent of $w$ if $M(w)(x)<\infty$ for almost every $x$.
(See \cite{St}, V, Subsection 5.2, and also Proposition 2 in \cite{CR}.)
Next, consider the following one-dimensional maximal function of $V$ in the $x_n$ variable:
$$\widetilde{V}(x)=\bigg(\sup_\mu\frac1{2\mu}\int_{x_n-\mu}^{x_n+\mu}|V(x_1,...,x_{n-1},\lambda)|^\beta d\lambda\bigg)^{1/\beta},\quad\beta>1.$$
Namely, $\widetilde{V}(x)=M(|V(x^\prime,\cdot)|^\beta)^{1/\beta}(x_n)$, where $x^\prime=(x_1,...,x_{n-1})$.
If $\beta\leq p$, by H\"older's inequality in $\lambda$ and the fact that
$V\in\mathcal{L}^{2,p}$, it follows that
$$\widetilde{V}(x)\leq C\bigg(\sup_\mu\frac1{2\mu}\int_{x_n-\mu}^{x_n+\mu}|V(x^\prime,\lambda)|^p d\lambda\bigg)^{1/p}<\infty,$$
and so $M(|V(x^\prime,\cdot)|^\beta)(x_n)<\infty$ for almost every $x_n$.
Thus it follows from \eqref{max} that
$$\widetilde{V}=M(|V(x^\prime,\cdot)|^\beta)^{1/\beta}\in A_1(\mathbb{R}),$$
which in turn implies that $\widetilde{V}\in A_2(\mathbb{R})$ (see \eqref{properties})
in the $x_n$ variable uniformly in $x^\prime$.
Finally, if $V\in\mathcal{L}^{2,p}$ for $p>(n-1)/2$, and $\beta<p$, then
\begin{equation}\label{relation}
\widetilde{V}\in\mathcal{L}^{2,p}\quad\text{and}\quad
\|\widetilde{V}\|_{\mathcal{L}^{2,p}}\leq C\|V\|_{\mathcal{L}^{2,p}},
\end{equation}
which can be found in~\cite{CS}. (See Lemma (2.14) there.)

Consequently, if $V\in\mathcal{L}^{2,p}$ for $p>(n-1)/2$, then
so is $\widetilde{V}$.
Also, since $\widetilde{V}\in A_2(\mathbb{R})$ in the $x_n$ variable uniformly in $x^\prime$,
it follows now that
\begin{equation*}
\|u\|_{L_{t,x}^2(\widetilde{V})}\leq C\|\widetilde{V}\|_{\mathcal{L}^{2,p}}
\|(i\partial_t+\Delta+P(D))u\|_{L_{t,x}^2(\widetilde{V}^{-1})}.
\end{equation*}
Since $|V|\leq\widetilde{V}$ and
$\|\widetilde{V}\|_{\mathcal{L}^{2,p}}\leq C\|V\|_{\mathcal{L}^{2,p}}$ (see \eqref{relation}),
this readily implies the desired inequality
\begin{equation*}
\|u\|_{L_{t,x}^2(|V|)}\leq C\|V\|_{\mathcal{L}^{2,p}}
\|(i\partial_t+\Delta+P(D))u\|_{L_{t,x}^2(|V|^{-1})}
\end{equation*}
for the original $V$ without the $A_2$ assumption.

%%%%%%%%%%%%%%%%%%%%%%%%%%%%%%%%%%%%%%%%%%%%%%%%%%%%%%%%%%%%%%%%%%%%%%%%%%%%%%%%%%%%%%%%%%%%%%%%%%%%%%%%%%%%%%%%%%%%%%%

\section{Global unique continuation}\label{sec6}

Given a class $\mathcal{R}$ of resolvent type, from Theorem \ref{thm2},
one can immediately see that the unique continuation
for the Schr\"odinger inequality
\begin{equation}\label{schineq222}
|(i\partial_t+\Delta)u(x,t)|\leq|V(x)u(x,t)|
\end{equation}
holds globally from a half-space in $\mathbb{R}^{n+1}$
under suitable assumptions on the potential $V\in\mathcal{R}$.
This shows a direct link between the unique continuation and
the resolvent estimate, and so a few implications of the resolvent estimates in Section \ref{sec2} are straightforward.
For instance, one can obtain some new results on the unique continuation with $V\in\mathcal{S}_n$.
Recall that $\mathcal{L}^{2,p}\subset\mathcal{S}_n$
for $p>(n-1)/2$, and so $L^{n/2},\,L^{n/2,\infty}\subset\mathcal{S}_n$.
In particular, $\mathcal{K},\,\mathcal{R}\subset\mathcal{S}_3$.
(See the paragraph below Definition \ref{def2.1}.)

Making use of the Carleman estimate in Proposition \ref{prop},
we will prove in this section the following global theorem, Theorem \ref{thm2}.

\begin{thm}[Theorem \ref{thm2}]\label{thm22}
Let $n\geq2$.
Assume that $u\in \mathcal{H}_{t}^{1}(\mathcal{L}_2)\cap \mathcal{H}_{x}^{2}(\mathcal{L}_2)$ is a solution
of \eqref{schineq222} with $V\in\mathcal{R}$ and vanishes in a half space with a unit normal vector $\nu\in\mathbb{R}^{n+1}$.
Then it must be identically zero if $|V|\in A_2(\nu^\prime)$
and $[V]_{\mathcal{R}}<\varepsilon$ for a sufficiently small $\varepsilon>0$.
\end{thm}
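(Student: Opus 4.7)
The strategy is to couple the abstract Carleman estimate of Proposition~\ref{prop} with the pointwise Schr\"odinger inequality, and to use the smallness of $[V]_{\mathcal{R}}$ to absorb the right-hand side into the left. After a translation (which leaves $[V]_{\mathcal{R}}$, the $A_2(\nu')$ condition, and the Schr\"odinger inequality invariant), I assume $u\equiv 0$ on $H^-:=\{(x,t):\langle(x,t),\nu\rangle\le 0\}$, so $\operatorname{supp} u\subset H^+$.

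The Carleman estimate \eqref{Carl} is stated for $u\in C_0^\infty(\mathbb{R}^{n+1})$, so the first task is to extend it to the solution space $\mathcal{H}_t^1(\mathcal{L}_2)\cap\mathcal{H}_x^2(\mathcal{L}_2)$ by a cutoff-and-mollify procedure: apply \eqref{Carl} to $\eta_\delta\ast(\phi_R u)$, with $\phi_R$ a smooth space-time cutoff equal to $1$ on $B_R\subset\mathbb{R}^{n+1}$ and supported in $B_{2R}$, and $\eta_\delta$ a standard mollifier. The commutator terms $[\Delta,\phi_R]u=2\nabla\phi_R\cdot\nabla u+u\,\Delta\phi_R$ and $[i\partial_t,\phi_R]u=iu\,\partial_t\phi_R$ are supported in the annulus $R\le|(x,t)|\le 2R$ with $|\partial^k\phi_R|=O(R^{-k})$; since $u,\nabla u\in L^2(|V|^{-1})$ by the Sobolev-type hypothesis, they vanish in $L^2(|V|^{-1})$ as $R\to\infty$ and $\delta\to 0$ by dominated convergence. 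Throughout, I fix $\beta<0$, so that $e^{\beta\langle\cdot,\nu\rangle}\le 1$ on $\operatorname{supp} u\subset H^+$ and all the weighted norms remain controlled uniformly in $R$ and $\delta$.

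Passing to the limit yields \eqref{Carl} for the actual solution $u$. The pointwise bound $|(i\partial_t+\Delta)u|\le|Vu|$ then gives
\begin{equation*}
\|e^{\beta\langle\cdot,\nu\rangle}u\|_{L^2_{t,x}(|V|)}
\le C[V]_{\mathcal{R}}\,\|e^{\beta\langle\cdot,\nu\rangle}Vu\|_{L^2_{t,x}(|V|^{-1})}
=C[V]_{\mathcal{R}}\,\|e^{\beta\langle\cdot,\nu\rangle}u\|_{L^2_{t,x}(|V|)}.
\end{equation*}
Choosing $\varepsilon<1/C$, the smallness hypothesis $[V]_{\mathcal{R}}<\varepsilon$ forces $\|e^{\beta\langle\cdot,\nu\rangle}u\|_{L^2(|V|)}=0$, hence $u=0$ a.e.\ on $\{V\neq 0\}$. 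Since $|V|\in A_2(\nu')$ is a genuine weight (vanishing on at most a Lebesgue-null set), and $u\equiv 0$ on $H^-$ by hypothesis, we conclude that $u\equiv 0$ a.e.\ on $\mathbb{R}^{n+1}$, as desired.

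\textbf{Main obstacle.} The principal technical difficulty is the first step: legitimating the use of \eqref{Carl} for $u$ in the solution space rather than $C_0^\infty$. The subtle point is that $\mathcal{L}_2=L^2\cap L^2(|V|^{-1})$ does not embed a priori into $L^2(|V|)$, so the left-hand side $\|e^{\beta\langle\cdot,\nu\rangle}u\|_{L^2(|V|)}$ need not be finite from the Sobolev-type hypothesis alone. What saves the argument is the combination of (i) $\operatorname{supp} u\subset H^+$, (ii) the sign choice $\beta<0$ (which bounds the weight by $1$ on the support of $u$), and (iii) the Schr\"odinger inequality, which couples the left-hand side to $\|e^{\beta\langle\cdot,\nu\rangle}(i\partial_t+\Delta)u\|_{L^2(|V|^{-1})}\le\|(i\partial_t+\Delta)u\|_{L^2(|V|^{-1})}<\infty$, the latter being finite because $(i\partial_t+\Delta)u\in\mathcal{L}_2\subset L^2(|V|^{-1})$ from the hypothesis on $u$. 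These three ingredients make the approximation, the passage to the limit, and the final absorption step all well-defined.
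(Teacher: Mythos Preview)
Your proposal is correct and takes a somewhat more streamlined route than the paper. Both arguments begin the same way: extend the Carleman estimate of Proposition~\ref{prop} from $C_0^\infty$ to the solution class via a cutoff-and-mollify procedure, using the half-space support of $u$ together with the chosen sign of $\beta$ to keep $e^{\beta\langle\cdot,\nu\rangle}\le 1$ on $\operatorname{supp}u$, so that the commutator terms from the cutoff vanish in $L^2(|V|^{-1})$ as $R\to\infty$. The difference lies in how the Schr\"odinger inequality is then exploited. The paper takes $\beta>0$, restricts to a thin strip $S_{-\sigma,0}(\nu)$, splits the right-hand side into the strip contribution (absorbed into the left) and a remainder on $H\setminus S_{-\sigma,0}(\nu)$ (which carries the factor $e^{-\beta\sigma}$), and sends $\beta\to+\infty$; an induction over strips then propagates the vanishing across $\mathbb{R}^{n+1}$. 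You instead observe that, under the global smallness $C[V]_{\mathcal R}<1$, the entire right-hand side can be absorbed at once for any fixed $\beta<0$, giving $\|e^{\beta\langle\cdot,\nu\rangle}u\|_{L^2(|V|)}=0$ directly, and then use that $|V|\in A_2(\nu')$ forces $|V|>0$ a.e.\ to conclude $u\equiv 0$. (The paper's final step ``letting $\beta\to\infty$ it follows that $u=0$ in the strip'' tacitly uses the same $|V|>0$ a.e.\ fact, which you make explicit.) Your version is shorter and avoids both the slicing/induction and the $\beta\to\infty$ limit; the trade-off is that the paper's strip-by-strip scheme is precisely what generalizes to the localized smallness hypothesis \eqref{small0} in Theorem~\ref{thm3} and to the local unique continuation of Theorem~\ref{thm66}, where global absorption is unavailable.
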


First, note from the previous section that the $A_2$ assumption in Proposition \ref{prop}
is superfluous in the cases of $\mathcal{R}=L^{n/2},\,L^{n/2,\infty}$, or $\mathcal{L}^{2,p},\,p>(n-1)/2$.
Hence, as mentioned earlier, the smallness assumption
can be given in these cases by a more local one
\begin{equation}\label{small010}
\sup_{a\in\mathbb{R}}
\lim_{\delta\rightarrow0}[\chi_{S_{a,a+\delta}(\nu^\prime)}V]_{\mathcal{R}}<\varepsilon\quad(\text{if }\nu=(\nu^\prime,0))
\end{equation}
which is trivially satisfied for the case $\mathcal{R}=L^{n/2}$,
and the solution space $\mathcal{L}_2$
can be extended to the whole space $L^2$ in the cases of
$\mathcal{R}=L^{n/2,\infty},\,\mathcal{L}^{2,p},\,(n-1)/2<p<n/2$, or $\mathcal{S}_3$.
In this regard, the Fefferman-Phong class $\mathcal{L}^{2,p}$
is especially good for our theorems,
and it is worth summarizing that point in the following theorem separately.
What we have just remarked above will be clearly demonstrated through the proof of it.

\begin{thm}\label{thm3}
Let $V\in\mathcal{L}^{2,p}$ for $(n-1)/2<p<n/2$, $n\geq3$.
Assume that $u\in\mathcal{H}_{t}^{1}(L^2)\cap\mathcal{H}_{x}^{2}(L^2)$ is a solution of \eqref{schineq222}
and vanishes in a half space with a unit normal vector $\nu\in\mathbb{R}^{n+1}$.
Then it must be identically zero if $\|V\|_{\mathcal{L}^{2,p}}<\varepsilon$ for a sufficiently small $\varepsilon>0$.
If $\nu=(\nu^\prime,0)\in\mathbb{R}^{n+1}$, then the smallness assumption can be replaced by
\begin{equation}\label{small0}
\sup_{a\in\mathbb{R}}
\lim_{\delta\rightarrow0}\|\chi_{S_{a,a+\delta}(\nu^\prime)}V\|_{\mathcal{L}^{2,p}}<\varepsilon
\end{equation}
which is a weaker one.
\end{thm}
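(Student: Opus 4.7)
The plan is to deduce Theorem \ref{thm3} from the general global Theorem \ref{thm22} by specializing to $\mathcal{R} = \mathcal{L}^{2,p}$, which is of resolvent type by \eqref{fp} with $[\,\cdot\,]_{\mathcal{L}^{2,p}} = \|\cdot\|_{\mathcal{L}^{2,p}}$. The main task is then to justify the three relaxations visible in the statement: (i) dropping the $A_2(\nu^\prime)$ hypothesis on $|V|$, (ii) enlarging the solution space from $\mathcal{L}_2$ to $L^2$, and (iii) localizing the smallness condition when $\nu = (\nu^\prime, 0)$.

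For (i), I would follow the comment on the $A_2$ assumption at the end of Section \ref{sec5}: after rotating so that $\nu^\prime$ points along the $x_n$-axis, introduce the one-directional maximal function $\widetilde{V}(x) = M(|V(x_1,\dots,x_{n-1},\cdot)|^\beta)^{1/\beta}(x_n)$ with $1 < \beta < p$. Properties \eqref{max} and \eqref{relation} then guarantee that $\widetilde{V} \in A_2(\mathbb{R})$ in $x_n$ uniformly in the transverse variables and that $\|\widetilde{V}\|_{\mathcal{L}^{2,p}} \leq C\|V\|_{\mathcal{L}^{2,p}}$. Proposition \ref{prop} applied with weight $\widetilde{V}$, combined with the pointwise bound $|V| \leq \widetilde{V}$, transfers to a Carleman estimate \eqref{Carl} with weight $|V|$ and constant $C\|V\|_{\mathcal{L}^{2,p}}$, free of any $A_2$ restriction on $V$. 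For (ii), the Fefferman-Phong inequality \eqref{ineq}, which holds in $\mathcal{L}^{2,p}$ for $p \leq n/2$, shows that every $u \in \mathcal{H}_x^2(L^2)$ lies automatically in $L_{t,x}^2(|V|)$, and then the Schr\"odinger inequality $|(i\partial_t+\Delta)u| \leq |Vu|$ forces $(i\partial_t+\Delta)u \in L_{t,x}^2(|V|^{-1})$. Both sides of the Carleman estimate are thus finite, and a standard $C_0^\infty$ truncation-and-density argument lets us apply \eqref{Carl} directly to $u$.

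With \eqref{Carl} available for $u$, the unique continuation itself follows by the usual Carleman method: apply the estimate to $\chi_R u$, where $\chi_R$ is a smooth cutoff in the direction $\nu$ that equals $1$ on $\{\langle\cdot,\nu\rangle \leq R\}$ and vanishes above $\{\langle\cdot,\nu\rangle \leq R+\delta\}$; use $|(i\partial_t+\Delta)u| \leq |Vu|$ and the smallness $\|V\|_{\mathcal{L}^{2,p}} < \varepsilon$ to absorb the $|Vu|$ contribution into the left-hand side; isolate the commutator terms $(\Delta\chi_R)u + 2\nabla\chi_R\cdot\nabla u$ in the narrow strip $\{R < \langle\cdot,\nu\rangle < R+\delta\}$, where the weight is bounded by $e^{\beta(R+\delta)}$; and let $\beta \to \infty$ to force $u$ to vanish on $\{\langle\cdot,\nu\rangle \leq R\}$. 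The initial vanishing of $u$ in a half-space provides the starting point, and iterating in $R$ yields $u \equiv 0$.

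For (iii), the localized smallness \eqref{small0} when $\nu = (\nu^\prime,0)$ exploits the fact that the Carleman weight $e^{\beta\langle x,\nu^\prime\rangle}$ then depends only on $x$: the propagation of vanishing can be carried out strip by strip in the direction $\nu^\prime$, so that at each step the potential can be replaced by $\chi_{S_{a,a+\delta}(\nu^\prime)}V$ in the absorption step, and only the localized norm $\|\chi_{S_{a,a+\delta}(\nu^\prime)}V\|_{\mathcal{L}^{2,p}}$ needs to be small. The main obstacle in the whole argument is making this strip-by-strip scheme rigorous: one must keep the commutator errors, which are weighted by the rapidly growing factor $e^{\beta\langle\cdot,\nu\rangle}$ on the side where they live, under uniform control as $\delta \to 0$ and $\beta \to \infty$, and verify that the constants produced at each step do not accumulate when the strips are patched together to cover all of $\mathbb{R}^{n+1}$.
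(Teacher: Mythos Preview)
Your treatment of (i) matches the paper's, and your outline for (iii) is in the right spirit. The substantive gap is in (ii), the extension of the solution space from $\mathcal{L}_2$ to $L^2$.

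You argue that the Fefferman--Phong inequality gives $u\in L^2_{t,x}(|V|)$ and that the differential inequality then forces $(i\partial_t+\Delta)u\in L^2_{t,x}(|V|^{-1})$; this part is correct and the paper uses it too, but only at the very last step (to see that the right-hand side of \eqref{am} is finite). What it does \emph{not} give you is the limiting step \eqref{ex}. When you expand $(i\partial_t+\Delta)\bigl((u\ast\psi_\varepsilon)\phi_R\bigr)$ you produce, besides $((i\partial_t+\Delta)u\ast\psi_\varepsilon)\phi_R$, the commutator terms $(u\ast\psi_\varepsilon)(i\partial_t+\Delta)\phi_R$ and $2\nabla_x(u\ast\psi_\varepsilon)\cdot\nabla_x\phi_R$, and these must be controlled in $L^2(|V|^{-1})$ on $B(0,2R)$. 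That requires $u,\nabla_x u\in L^2_{\mathrm{loc}}(|V|^{-1})$, which is exactly the $\mathcal{L}_2$ hypothesis you are trying to drop and which does not follow from $u\in \mathcal{H}^1_t(L^2)\cap\mathcal{H}^2_x(L^2)$ alone. A ``standard truncation-and-density'' argument cannot close here because the weight $|V|^{-1}$ is a priori unbounded on compacta. The paper's device---and the only place where the upper restriction $p<n/2$ is used---is to replace $|V|$ by $|V|+\delta/|x|^2$: since $|x|^{-2}\in L^{n/2,\infty}\subset\mathcal{L}^{2,p}$ precisely when $p<n/2$, one can take $\delta$ so small that the perturbed potential still has small $\mathcal{L}^{2,p}$ norm, while now $|V|^{-1}\leq|x|^2/\delta\leq CR^2$ on $B(0,2R)$, so the $L^2(|V|^{-1})$ norm on balls is dominated by the $L^2$ norm and \eqref{ex} goes through from $L^2$ regularity alone. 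Your proposal neither invokes this trick nor accounts for the role of $p<n/2$.

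A smaller point: your description of the Carleman step has the geometry inverted. With $u$ vanishing on $\{\langle(x,t),\nu\rangle>0\}$ one truncates on the \emph{far} side, so that the commutator lives where $\langle(x,t),\nu\rangle\leq-\sigma$ and the weight $e^{\beta\langle(x,t),\nu\rangle}$ is \emph{small} ($\leq e^{-\beta\sigma}$); sending $\beta\to\infty$ then forces $u=0$ on the strip $S_{-\sigma,0}(\nu)$, where the weight is comparatively large. With your cutoff the commutator sits where the weight is maximal, and letting $\beta\to\infty$ yields nothing. Also note that a one-directional slab cutoff does not produce a $C_0^\infty$ test function; the paper uses a full ball cutoff $\phi_R$ together with the mollifier.
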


The rest of this section is devoted to proving the above-mentioned theorems.

\begin{proof}[Proof of Theorem \ref{thm22}]
By translation we may first assume that
the solution $u$ vanishes in the half space $\{(x,t)\in\mathbb{R}^{n+1}:\langle (x,t),\nu\rangle>0\}$.
Now, from induction it suffices to show that there is
$\sigma>0$ so that $u=0$ in the following strip
\begin{equation}\label{sig}
S_{-\sigma,0}(\nu)=\{(x,t)\in\mathbb{R}^{n+1}:-\sigma<\langle(x,t),\nu\rangle\leq0\}
\end{equation}
with width $\sigma>0$.
We will show this by making use of the Carleman estimate in Proposition \ref{prop}.

First, let $\psi:\mathbb{R}^{n+1}\rightarrow[0,\infty)$ be a
smooth function such that $\text{supp}\,\psi\subset B(0,1)$ and
$$\int_{\mathbb{R}^{n+1}}\psi(x,t)dxdt=1.$$
For $0<\varepsilon<1$, we put
\begin{equation}\label{psi}
\psi_\varepsilon(x,t)=\varepsilon^{-(n+1)}\psi(x/\varepsilon,t/\varepsilon).
\end{equation}
Also, let $\phi:\mathbb{R}^{n+1}\rightarrow[0,1]$ be a smooth
function such that $\phi=1$ in $B(0,1)$ and $\phi=0$ in $\mathbb{R}^{n+1}\setminus B(0,2)$,
and for $R\geq1$ we set $\phi_R(x,t)=\phi(x/R,t/R)$.

Now we apply the Carleman estimate \eqref{Carl} for $V\in\mathcal{R}$ to the following $C_0^\infty$ function
\begin{equation}\label{uu}
\widetilde{u}(x,t)=(u\ast\psi_\varepsilon)(x,t)\phi_R(x,t)
\end{equation}
which is supported in the set
\begin{equation}\label{hh}
H_\varepsilon=\{(x,t)\in\mathbb{R}^{n+1}:\langle(x,t),\nu\rangle\leq\varepsilon\}.
\end{equation}
Then, we see that
$$\big\|\chi_{S_{-\sigma,0}(\nu)}e^{\beta \langle(x,t),\nu\rangle}\widetilde{u}\big\|_{L_{t,x}^2(|V|)}\leq
C[V]_{\mathcal{R}}\big\|\chi_{H_\varepsilon}e^{\beta\langle(x,t),\nu\rangle}(i\partial_t+\Delta)\widetilde{u}\big\|_
{L_{t,x}^2(|V|^{-1})}.$$
By using Fatou's lemma it follows now that
\begin{align}\label{4688}
\nonumber\big\|\chi_{S_{-\sigma,0}(\nu)}e^{\beta \langle(x,t),\nu\rangle}&u\phi_R\big\|_{L_{t,x}^2(|V|)}\\
&\leq C[V]_{\mathcal{R}}\lim_{\varepsilon\rightarrow0}
\big\|\chi_{H_\varepsilon}e^{\beta\langle(x,t),\nu\rangle}(i\partial_t+\Delta)\widetilde{u}\big\|_{L_{t,x}^2(|V|^{-1})},
\end{align}
and note that
$$(i\partial_t+\Delta)\widetilde{u}=((i\partial_t+\Delta)u\ast\psi_\varepsilon)\phi_R
+(u\ast\psi_\varepsilon)(i\partial_t+\Delta)\phi_R
+2\nabla_x(u\ast\psi_\varepsilon)\cdot\nabla_x\phi_R.$$
Hence, since we are assuming that $u\in \mathcal{H}_t^1(\mathcal{L}_2)\cap \mathcal{H}_x^2(\mathcal{L}_2)$,
it follows that
\begin{align}\label{ex}
\nonumber\lim_{\varepsilon\rightarrow0}
\big\|\chi_{H_\varepsilon}e^{\beta\langle(x,t),\nu\rangle}(i\partial_t+\Delta)\widetilde{u}&\big\|_
{L_{t,x}^2(|V|^{-1})}\\
\nonumber&\leq\big\|\chi_{H}e^{\beta\langle(x,t),\nu\rangle}((i\partial_t+\Delta)u)\phi_R\big\|_
{L_{t,x}^2(|V|^{-1})}\\
\nonumber&+\big\|\chi_{H}e^{\beta\langle(x,t),\nu_0\rangle}u(i\partial_t+\Delta)\phi_R\big\|_
{L_{t,x}^2(|V|^{-1})}\\
&+2\big\|\chi_{H}e^{\beta\langle(x,t),\nu_0\rangle}\nabla_xu\cdot\nabla_x\phi_R\big\|_
{L_{t,x}^2(|V|^{-1})},
\end{align}
where
\begin{equation}\label{h}
H=\{(x,t)\in\mathbb{R}^{n+1}:\langle(x,t),\nu\rangle\leq0\}.
\end{equation}
Then, by letting $R\rightarrow\infty$ we see that
\begin{align*}
\lim_{R\rightarrow\infty}\lim_{\varepsilon\rightarrow0}
\big\|\chi_{H_\varepsilon}e^{\beta\langle(x,t),\nu\rangle}(i\partial_t+\Delta)\widetilde{u}\big\|_{L_{t,x}^2(|V|^{-1})}
\leq\big\|\chi_{H}e^{\beta\langle(x,t),\nu\rangle}(i\partial_t+\Delta)u\big\|_{L_{t,x}^2(|V|^{-1})}.
\end{align*}
By applying Fatou's lemma to \eqref{4688} and using this, we conclude that
\begin{equation}\label{456}
\big\|\chi_{S_{-\sigma,0}(\nu)}e^{\beta \langle(x,t),\nu\rangle}u\big\|_{L_{t,x}^2(|V|)}
\leq C[V]_{\mathcal{R}}\big\|\chi_{H}e^{\beta\langle(x,t),\nu\rangle}(i\partial_t+\Delta)u\big\|_
{L_{t,x}^2(|V|^{-1})}.
\end{equation}

Now we decompose the norm in the right-hand side of \eqref{456} into two parts
$$\big\|\chi_{S_{-\sigma,0}(\nu)}e^{\beta\langle(x,t),\nu\rangle}(i\partial_t+\Delta)u\big\|_{L_{t,x}^2(|V|^{-1})}$$
and
$$\big\|\chi_{H\setminus S_{-\sigma,0}(\nu)}e^{\beta\langle(x,t),\nu\rangle}(i\partial_t+\Delta)u\big\|_
{L_{t,x}^2(|V|^{-1})}.$$
Then, since $u$ is a solution of the Schr\"odinger inequality and we are assuming that
$[V]_{\mathcal{R}}<\varepsilon$,
the first part can be absorbed into the left-hand side of \eqref{456} in the following way:
\begin{align*}
\big\|\chi_{S_{-\sigma,0}(\nu)}e^{\beta\langle(x,t),\nu\rangle}(i\partial_t+\Delta)u\big\|_{L_{t,x}^2(|V|^{-1})}
&\leq\big\|\chi_{S_{-\sigma,0}(\nu)}e^{\beta\langle(x,t),\nu\rangle}Vu\big\|_{L_{t,x}^2(|V|^{-1})}\\
&=\big\|\chi_{S_{-\sigma,0}(\nu)}e^{\beta\langle(x,t),\nu\rangle}u\big\|_{L_{t,x}^2(|V|)}.
\end{align*}
On the other hand, the second part is bounded for $\beta>0$ by
$$e^{-\beta\sigma}\big\|\chi_{H\setminus S_{-\sigma,0}(\nu)}(i\partial_t+\Delta)u\big\|_
{L_{t,x}^2(|V|^{-1})}$$
because $\langle(x,t),\nu\rangle\leq-\sigma$ in the set $H\setminus S_{-\sigma,0}(\nu)$.
Consequently, we get
\begin{align}\label{am}
\big\|\chi_{S_{-\sigma,0}(\nu)}e^{\beta (\langle(x,t),\nu\rangle+\sigma)}u\big\|_{L_{t,x}^2(|V|)}
&\leq C\|\chi_{H\setminus S_{-\sigma,0}(\nu)}(i\partial_t+\Delta)u\|_{L_{t,x}^2(|V|^{-1})}\\
\nonumber&<\infty.
\end{align}
Finally, by letting $\beta\rightarrow\infty$ it follows that $u=0$ in the strip $S_{-\sigma,0}(\nu)$.
This completes the proof.
\end{proof}

\begin{proof}[Proof of Theorem \ref{thm3}]
To prove the first assertion of the theorem, we only need to show that
the solution space $\mathcal{L}_2$ can be extended to the whole space $L^2$.
For this we will use the fact that $1/|x|^2\in\mathcal{L}^{2,p}$
and the Fefferman-Phong inequality \eqref{ineq}.
First, note that
\begin{equation}\label{note2}
\|(f\ast g)\chi_{B(0,2R)}\|_{L_{t,x}^2(|V|^{-1})}\leq CR^2\|f\|_{L_{t,x}^2}\|g\|_{L_{t,x}^1}
\end{equation}
if $|V|\geq C/|x|^2$ for $|x|\leq2R$. Since we can choose $\delta>0$ small enough so that $\|\delta/|x|^2\|_{\mathcal{L}^{2,p}}=\delta\|1/|x|^2\|_{\mathcal{L}^{2,p}}$ is sufficiently small,
and
$$|(i\partial_t+\Delta)u|\leq|Vu|\leq|(|V|+\delta/|x|^2)u|,$$
we may assume that $|V|\geq\delta/|x|^2$ by replacing $V$ with $|V|+\delta/|x|^2$.
Then, by using \eqref{note2}, one can get \eqref{ex} assuming
$u\in \mathcal{H}_t^1(L^2)\cap \mathcal{H}_x^2(L^2)$
instead of
$u\in \mathcal{H}_t^1(\mathcal{L}_2)\cap \mathcal{H}_x^2(\mathcal{L}_2)$.
Next, one can use the Fefferman-Phong inequality \eqref{ineq} to bound the right-hand side of \eqref{am}.
Indeed, by applying the inequality, it follows that
\begin{align*}
\|\chi_{H\setminus S_{-\sigma,0}(\nu)}(i\partial_t+\Delta)u\|_{L_{t,x}^2(|V|^{-1})}
&\leq\|Vu\|_{L_{t,x}^2(|V|^{-1})}\\
&=\|u\|_{L_{t,x}^2(|V|)}\\
&\leq C\|V\|_{\mathcal{L}^{2,p}}\|\nabla u\|_{L_{t,x}^2}<\infty.
\end{align*}
Thus, the previous proof entirely works for $u\in\mathcal{H}_t^1(L^2)\cap \mathcal{H}_x^2(L^2)$.

\smallskip

For the second assertion of the theorem,
we consider the following $C_0^\infty$ function instead of \eqref{uu}:
$$\widetilde{u}_\sigma(x,t)=(u\ast\psi_\varepsilon)(x,t)\phi_R(x,t)\varphi_\sigma(\langle(x,t),\nu\rangle),$$
where $\varphi_\sigma(r)=\varphi(r/\sigma)$ for $\varphi:\mathbb{R}\rightarrow[0,1]$ which is a smooth
function equal to $1$ in $\{-1/2<r\leq0\}$ and equal to $0$
in $\{r<-1 \text{ or } r\geq1/2\}$.
Note first that if $\nu=(\nu^\prime,0)$
\begin{equation}\label{note22}
S_{a,a+\delta}(\nu)=\mathbb{R}\times S_{a,a+\delta}(\nu^\prime),
\end{equation}
and so
$$\big\|\chi_{S_{a,a+\delta}(\nu)}(x,t)e^{\beta \langle(x,t),\nu\rangle}\widetilde{u}_\sigma\big\|_{L_{t,x}^2(|V|)}
=\big\|e^{\beta \langle(x,t),\nu\rangle}\widetilde{u}_\sigma\big\|_
{L_{t,x}^2(|\chi_{S_{a,a+\delta}(\nu^\prime)}(x)V|)}.$$
Then, since $\widetilde{u}_\sigma$ is supported in the strip $S_{-\sigma,\varepsilon}(\nu)$,
by applying the Carleman estimate \eqref{Carl} with $V=\chi_{S_{-3\sigma/2,\sigma/2}(\nu^\prime)}V$ to $\widetilde{u}_\sigma$, one can see that
\begin{align*}
\big\|\chi_{S_{-\sigma/2,0}(\nu)}&e^{\beta \langle(x,t),\nu\rangle}\widetilde{u}_\sigma\big\|_{L_{t,x}^2(|V|)}\\
&\leq C\|\chi_{S_{-3\sigma/2,\sigma/2}(\nu^\prime)}V\|_{\mathcal{L}^{2,p}}
\big\|\chi_{S_{-\sigma,\varepsilon}(\nu)}e^{\beta\langle(x,t),\nu\rangle}(i\partial_t+\Delta)\widetilde{u}_\sigma\big\|_
{L_{t,x}^2(|V|^{-1})}
\end{align*}
because $S_{-\sigma,\varepsilon}(\nu^\prime)\subset S_{-3\sigma/2,\sigma/2}(\nu^\prime)$ for $\varepsilon<\sigma/2$.
Hence, by the same limiting argument as before, it follows that
\begin{align*}
\big\|\chi_{S_{-\sigma/2,0}(\nu)}&e^{\beta \langle(x,t),\nu\rangle}u\big\|_{L_{t,x}^2(|V|)}\\
&\leq C\|\chi_{S_{-3\sigma/2,\sigma/2}(\nu^\prime)}V\|_{\mathcal{L}^{2,p}}
\big\|\chi_{S_{-\sigma,0}(\nu)}e^{\beta\langle(x,t),\nu\rangle}(i\partial_t+\Delta)u\big\|_{L_{t,x}^2(|V|^{-1})}.
\end{align*}
Now, from the smallness assumption \eqref{small0} we can choose $\sigma>0$ small enough so that
$$C\|\chi_{S_{-3\sigma/2,\sigma/2}(\nu^\prime)}V\|_{\mathcal{L}^{2,p}}<1/2.$$
By decomposing $\chi_{S_{-\sigma,0}(\nu)}=\chi_{S_{-\sigma/2,0}(\nu)}+\chi_{S_{-\sigma,-\sigma/2}(\nu)}$
and repeating the previous argument, this leads to
\begin{align*}
\big\|\chi_{S_{-\sigma/2,0}(\nu)}e^{\beta (\langle(x,t),\nu\rangle+\sigma/2)}u\big\|_{L_{t,x}^2(|V|)}
&\leq C\|\chi_{S_{-\sigma,-\sigma/2}(\nu)}(i\partial_t+\Delta)u\|_{L_{t,x}^2(|V|^{-1})}\\
&< \infty.
\end{align*}
Hence, by letting $\beta\rightarrow\infty$ it follows that $u=0$ in the strip $S_{-\sigma/2,0}(\nu)$.
This completes the proof.
\end{proof}

%%%%%%%%%%%%%%%%%%%%%%%%%%%%%%%%%%%%%%%%%%%%%%%%%%%%%%%%%%%%%%%%%%%%%%%%%%%%%%%%%%%%%%%%%%%%%%%%%%%%%%

\section{Local unique continuation}\label{sec7}

Now we turn to the local unique continuation theorem.
Recall that there exists a smooth potential $V$
such that $(i\partial_t+\Delta)u=V(x,t)u$,
$0\in\text{supp}\,u$, and $u=0$ on $\{(x,t)\in\mathbb{R}^{n+1}:x_1<0\}$ in a neighborhood of the origin.
See the paragraph above Theorem \ref{thm6}.
Since $0\in\text{supp}\,u$, the solution $u$ cannot vanish near the origin
across the hyperplane $\{(x,t)\in\mathbb{R}^{n+1}:x_1=0\}$.
This shows that the Schr\"odinger equation does not have, as a rule, a property of unique continuation
locally across a hyperplane in $\mathbb{R}^{n+1}$.
However, our result in Theorem \ref{thm6} says that the unique continuation for the Schr\"odinger inequality
\begin{equation}\label{schineq223}
|(i\partial_t+\Delta)u(x,t)|\leq|V(x)u(x,t)|
\end{equation}
can hold locally across a hypersurface on a sphere in $\mathbb{R}^{n+1}$
into an interior region of the sphere.

\begin{thm}[Theorem \ref{thm6}]\label{thm66}
Let $n\geq2$ and let $S_r^n$ be a sphere in $\mathbb{R}^{n+1}$ with radius $r$.
Assume that $u\in\mathcal{H}_{t}^{1}(\mathcal{L}_2)\cap \mathcal{H}_{x}^{2}(\mathcal{L}_2)$
is a solution of \eqref{schineq223} with $V\in\mathcal{R}$
and vanishes on an exterior neighborhood of $S_r^n$ in a neighborhood of a point $p\in S_r^n$.
Let $\nu$ be the unit outward normal vector of $S_r^n$ at $p$.
Then it follows that $u\equiv0$ in a neighborhood of $p$,
if $|V|\in A_2(\nu^\prime)$ and $[V]_{\mathcal{R}}<\varepsilon$ for a sufficiently small $\varepsilon>0$.
\end{thm}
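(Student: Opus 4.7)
The plan is to adapt the proof of Theorem \ref{thm22} to the curved geometry of the sphere by exploiting the quadratic gap between $S_r^n$ and its tangent hyperplane at $p$. After translating so that $p=0$ and writing the outward unit normal as $\nu$, the identity
\begin{equation*}
|q-c|^2-r^2 = |q|^2 + 2r\langle q,\nu\rangle,
\end{equation*}
with $c$ the center of $S_r^n$, shows that for any sufficiently small $\rho>0$ the interior of $S_r^n$ intersected with $B_\rho(0)$ equals $\{q\in B_\rho(0):\langle q,\nu\rangle<-|q|^2/(2r)\}$. In particular every such $q$ with $|q|\geq\rho/2$ satisfies $\langle q,\nu\rangle\leq -\rho^2/(8r)$, and this quadratic margin is exactly what will let the linear-phase Carleman estimate of Proposition \ref{prop} see the curved problem.

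I would apply Proposition \ref{prop} with the weight $e^{\beta\langle q,\nu\rangle}$ to a regularized truncation $\widetilde u=(u\ast\psi_\delta)\phi$, where $\phi\in C_0^\infty$ is equal to $1$ on $B_{\rho/2}(0)$ and supported in $B_\rho(0)$. Sending $\delta\to 0$ via Fatou's lemma exactly as in Theorem \ref{thm22} produces
\begin{equation*}
\|e^{\beta\langle q,\nu\rangle}u\phi\|_{L_{t,x}^2(|V|)}\leq C[V]_{\mathcal{R}}\|e^{\beta\langle q,\nu\rangle}\phi(i\partial_t+\Delta)u\|_{L_{t,x}^2(|V|^{-1})}+C[V]_{\mathcal{R}}\,E_\phi(\beta),
\end{equation*}
where $E_\phi(\beta)$ bundles the commutator terms involving $\nabla_x\phi$, $\Delta_x\phi$ and $\partial_t\phi$. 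The Schr\"odinger inequality $|(i\partial_t+\Delta)u|\leq|Vu|$ turns the first right-hand term into a copy of the left-hand side, which is absorbed by the smallness hypothesis $[V]_{\mathcal{R}}<\varepsilon$.

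The decisive observation is that the commutators in $E_\phi(\beta)$ are spatially supported in $\{\rho/2\leq|q|\leq\rho\}$, where by hypothesis $u$ can be non-zero only on the interior of $S_r^n$; so their effective support lies in $\{\rho/2\leq|q|\leq\rho\}\cap\{\langle q,\nu\rangle\leq -|q|^2/(2r)\}$, on which $e^{\beta\langle q,\nu\rangle}\leq e^{-\beta\rho^2/(8r)}$. This gives $E_\phi(\beta)\leq C_\phi\, e^{-\beta\rho^2/(8r)}$, with $C_\phi<\infty$ by the regularity $u\in\mathcal{H}_t^1(\mathcal{L}_2)\cap\mathcal{H}_x^2(\mathcal{L}_2)$. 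Restricting the left-hand side to a smaller target ball $B_{\rho'}(0)$ with $\rho'<\rho^2/(8r)$ bounds it from below by $e^{-\beta\rho'}\|\chi_{B_{\rho'}(0)}u\|_{L_{t,x}^2(|V|)}$, so letting $\beta\to\infty$ forces $u\equiv 0$ on $B_{\rho'}(0)$ intersected with the interior of the sphere; together with the assumed vanishing on the exterior neighborhood this yields $u\equiv 0$ on all of $B_{\rho'}(0)$. The main obstacle I anticipate is exactly the same technical bookkeeping that occupied most of Theorem \ref{thm22}: justifying the $\delta\to 0$ limit and the finiteness of $C_\phi$, which requires controlling $u$ and $\nabla_x u$ in weighted $L^2$ on the cutoff annulus — the curvature of the sphere plays no role in that step, but the quadratic margin $\rho^2/(8r)$ does all the geometric work in the final contradiction.
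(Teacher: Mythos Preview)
Your proposal is correct and follows essentially the same approach as the paper: both translate $p$ to the origin, apply the Carleman estimate of Proposition~\ref{prop} to a mollified cutoff of $u$, absorb the main term via the smallness of $[V]_{\mathcal{R}}$, and use the quadratic contact between $S_r^n$ and its tangent hyperplane to force exponential decay of the remaining terms as $\beta\to\infty$. The only difference is organizational---the paper packages the geometry by introducing a thin strip $S_{-\sigma,0}(\nu)$ (with $\sigma$ chosen so that the cutoff annulus intersected with the strip lies in the exterior vanishing set) and splits the right-hand side into strip and non-strip pieces, whereas you absorb the entire main term at once and read the decay $e^{-\beta\rho^2/(8r)}$ directly off the interior portion of the annulus; these are equivalent bookkeepings of the same quadratic-gap observation.
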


In the same cases as in the global theorem,
the solution space $\mathcal{L}_2$ can be extended to the whole space $L^2$
%in the cases of
%$\mathcal{R}=L^{n/2,\infty},\,\mathcal{L}^{2,p},\,(n-1)/2<p<n/2$, or $\mathcal{S}_3$.
and the smallness assumption can be given by a more local one
$$\lim_{\delta\rightarrow0}[\chi_{B_\delta(p^\prime)}V]_{\mathcal{R}}<\varepsilon,$$
where $B_\delta(p)$ denotes a ball centered at $p$ with radius $\delta$.
Unlike \eqref{small010}, we do not need to assume $\nu=(\nu^\prime,0)$,
since the property \eqref{note22} is trivially satisfied for balls in the sense that
$x\in B_\delta(p^\prime)$ whenever $(x,t)\in B_\delta(p)$.

\begin{proof}[Proof of Theorem \ref{thm66}]
For simplicity of notation, we shall first assume that the point $p$ is the origin.
Then, we are assuming that the solution $u$ vanishes on an exterior neighborhood of a sphere $S_r^n$
in a neighborhood $\mathcal{N}$ of the origin,
and $\nu$ is the unit outward normal vector of $S_r^n$ at the origin.
Hence, there is $\delta>0$ so that $B_\delta(0)\subset\mathcal{N}$,
and since $u$ vanishes on an exterior neighborhood of $S_r^n$ in $\mathcal{N}$,
we can choose $\sigma>0$ small enough so that $u$ vanishes on the set
$$(B_\delta(0)\setminus B_{\delta/2}(0))\cap S_{-\sigma,0}(\nu),$$
where $S_{-\sigma,0}(\nu)$ is the strip given in \eqref{sig}.
Of course, the solution is also vanishing in the set
$$B_\delta(0)\cap\{(x,t)\in\mathbb{R}^{n+1}:\langle(x,t),\nu\rangle>0\}.$$
Now it suffices to show that $u$ vanishes in the set $B_{\delta/2}(0)\cap S_{-\sigma,0}(\nu)$.

To show this, we apply the Carleman estimate in Proposition \ref{prop}
to the $C_0^\infty$ function
$$\widetilde{u}(x,t)=(u\ast\psi_\varepsilon)(x,t)\eta(x,t),$$
where $\psi_\varepsilon$ is given in \eqref{psi} and $\eta:\mathbb{R}^{n+1}\rightarrow[0,1]$ is a smooth
function such that $\eta=1$ in $B_{\delta/2}(0)$ and $\eta=0$
in $\mathbb{R}^{n+1}\setminus B_\delta(0)$.
In fact, since $\widetilde{u}$ is supported in the set
$B_\delta(0)\cap H_\varepsilon$, where $H_\varepsilon$ is given in \eqref{hh},
one can see that
\begin{align*}
\big\|\chi_{B_{\delta/2}(0)\cap S_{-\sigma,0}(\nu)}e^{\beta \langle(x,t),\nu\rangle}&\widetilde{u}\big\|_{L_{t,x}^2(|V|)}\\
&\leq C[V]_{\mathcal{R}}\big\|\chi_{B_\delta(0)\cap H_\varepsilon}e^{\beta\langle(x,t),\nu\rangle}(i\partial_t+\Delta)\widetilde{u}\big\|_
{L_{t,x}^2(|V|^{-1})}.
\end{align*}
Then, by the same limiting argument as before, it follows that
\begin{align}\label{seoi}
\nonumber\big\|\chi_{B_{\delta/2}(0)\cap S_{-\sigma,0}(\nu)}
&e^{\beta \langle(x,t),\nu\rangle}u\big\|_{L_{t,x}^2(|V|)}\\
&\leq C[V]_{\mathcal{R}}
\big\|\chi_{B_\delta(0)\cap H}e^{\beta\langle(x,t),\nu\rangle}(i\partial_t+\Delta)(u\eta)\big\|_
{L_{t,x}^2(|V|^{-1})},
\end{align}
where $H$ is given in \eqref{h}.

Now, we decompose the norm in the right-hand side of \eqref{seoi} into two parts.
Then the first part
$$\big\|\chi_{B_{\delta/2}(0)\cap S_{-\sigma,0}(\nu)}e^{\beta\langle(x,t),\nu\rangle}(i\partial_t+\Delta)u\big\|_{L_{t,x}^2(|V|^{-1})}$$
can be absorbed into the left-hand side of \eqref{seoi} as before,
while the second part
$$\big\|\chi_{(B_\delta(0)\cap H)\setminus(B_{\delta/2}(0)\cap S_{-\sigma,0}(\nu))}e^{\beta\langle(x,t),\nu\rangle}(i\partial_t+\Delta)(u\eta)\big\|_
{L_{t,x}^2(|V|^{-1})}$$
is bounded for $\beta>0$ by
$$e^{-\beta\sigma}\big\|\chi_{(B_\delta(0)\cap H)\setminus(B_{\delta/2}(0)\cap S_{-\sigma,0}(\nu))}(i\partial_t+\Delta)(u\eta)\big\|_{L_{t,x}^2(|V|^{-1})},$$
since $u$ vanishes on the set $(B_\delta(0)\setminus B_{\delta/2}(0))\cap S_{-\sigma,0}(\nu)$
and $\langle(x,t),\nu\rangle\leq-\sigma$ in the set $H\setminus S_{-\sigma,0}(\nu)$.
At this point, it is worth noting that the assumption that
$u$ vanishes on an exterior neighborhood of a sphere is crucial in this second part
in order to guarantee that $u$ vanishes on the set $(B_\delta(0)\setminus B_{\delta/2}(0))\cap S_{-\sigma,0}(\nu)$.
Consequently, we see that
\begin{align*}
\big\|\chi_{B_{\delta/2}(0)\cap S_{-\sigma,0}(\nu)}e^{\beta (\langle(x,t),\nu\rangle+\sigma)}u\big\|_{L_{t,x}^2(|V|)}
&\leq C\big\|(i\partial_t+\Delta)(u\eta)\big\|_{L_{t,x}^2(|V|^{-1})}\\
&<\infty
\end{align*}
since $\eta\in C_0^\infty$ and $u\in \mathcal{H}_t^1(\mathcal{L}_2)\cap \mathcal{H}_x^2(\mathcal{L}_2)$.
Finally, by letting $\beta\rightarrow\infty$
it follows that $u=0$ in the set $B_{\delta/2}(0)\cap S_{-\sigma,0}(\nu)$, as desired.
This completes the proof.
\end{proof}

%%%%%%%%%%%%%%%%%%%%%%%%%%%%%%%%%%%%%%%%%%%%%%%%%%%%%%%%%%%%%%%%%%%%%%%%%%%%%%%%%%%%%%%%%%%%%%%%%%%%%%%

%%%%%%%%%%%%%%%%%%%%%%%%%%%%%%%%%%%%%%%%%%%%%%%%%%%%%%%%%%%%%%%%%%%%%%%%%%%%%%%%%%%%%%%%%%%%%%%%%%%%%%%

\section{Further applications}\label{sec8}

In this section we present a few applications of our resolvent estimates to global well-posedness
of the Schr\"odinger equation in weighted $L^2$ spaces.

Let us first consider the following Cauchy problem with an initial datum $f$ and a forcing term $F$:
\begin{equation*}
\left\{
\begin{array}{ll}
i\partial_tu+\Delta u=F(x,t),\\
u(x,0)=f(x).
\end{array}\right.
\end{equation*}
By Duhamel's principle, the solution is then given by
\begin{equation*}
u(x,t)=e^{it\Delta}f(x)-i\int_0^te^{i(t-s)\Delta}F(\cdot,s)ds,
\end{equation*}
where the first and second terms correspond to the solutions of the homogeneous ($F=0$)
and inhomogeneous ($f=0$) problems, respectively.
Now we have the following Strichartz estimates for the solutions in weighted $L^2$ spaces:

\begin{prop}\label{str}
Let $n\geq2$ and let $V:\mathbb{R}^n\rightarrow\mathbb{C}$ be such that the resolvent estimate
\begin{equation*}
\|R_0(z)f\|_{L^2(|V|)}\leq C(V)\|f\|_{L^2(|V|^{-1})}
\end{equation*}
holds uniformly in $z\in\mathbb{C}\setminus\mathbb{R}$.
Then the following estimates hold:
\begin{equation}\label{homo8}
\big\|e^{it\Delta}f\big\|_{L_{t,x}^2(|V|)}\leq CC(V)^{1/2}\|f\|_2,
\end{equation}
\begin{equation}\label{dual8}
\bigg\|\int_{-\infty}^\infty e^{-is\Delta}F(\cdot,s)ds\bigg\|_2
\leq CC(V)^{1/2}\|F\|_{L_{t,x}^2(|V|^{-1})},
\end{equation}
\begin{equation}\label{inho8}
\bigg\|\int_0^t e^{i(t-s)\Delta}F(\cdot,s)ds\bigg\|_{L_{t,x}^2(|V|)}
\leq CC(V)\|F\|_{L_{t,x}^2(|V|^{-1})}.
\end{equation}
\end{prop}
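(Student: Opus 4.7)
Estimates \eqref{homo8} and \eqref{dual8} are essentially free, while the substantive work lies in the retarded inhomogeneous bound \eqref{inho8}. The first, \eqref{homo8}, is precisely Lemma \ref{str0}, already proved in two independent ways in Section \ref{sec4}, so nothing new is required. The second, \eqref{dual8}, is dual to \eqref{homo8}: setting $Tf(x,t) = |V(x)|^{1/2}e^{it\Delta}f(x)$, the first estimate reads $\|T\|_{L^2_x \to L^2_{t,x}} \le CC(V)^{1/2}$, and a direct computation identifies the adjoint as $T^*G(x) = \int e^{-is\Delta}(|V|^{1/2}G(\cdot,s))ds$; applying the adjoint bound $\|T^*G\|_{L^2_x} \le CC(V)^{1/2}\|G\|_{L^2_{t,x}}$ to $G = |V|^{-1/2}F$ yields \eqref{dual8}, exactly as already indicated in the footnote to Lemma \ref{str0}.

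The substantive step is \eqref{inho8}. Because both sides carry $L^2$ in time, the usual Christ--Kiselev reduction from the non-retarded bound of Proposition \ref{istr} is unavailable, and my plan is instead to work on the Fourier-in-$t$ side and invoke the limiting absorption principle of Proposition \ref{wlap}. Splitting $F = F_+ + F_-$ with $F_\pm$ supported in $\pm s \ge 0$ and reducing by time reversal to the case $F = F_+$, the integral $\int_0^t$ coincides with $\int_{-\infty}^t$. Setting $u(x,t) = \int_{-\infty}^t e^{i(t-s)\Delta}F_+(\cdot,s)ds$ and using the distributional identity $\int_0^\infty e^{ir(\Delta - \tau + i0)}dr = i(\Delta - \tau + i0)^{-1}$, one obtains
\begin{equation*}
\widehat u(\cdot, \tau) = -iR_0(-\tau + i0)\widehat F_+(\cdot, \tau).
\end{equation*}
Since the weights $|V|^{\pm 1}$ depend only on $x$, Plancherel in $t$ commutes with the weighted $L^2_x$ norm, so \eqref{inho8} reduces to the uniform boundary resolvent bound
\begin{equation*}
\|R_0(-\tau + i0)g\|_{L^2(|V|)} \le CC(V)\|g\|_{L^2(|V|^{-1})}, \quad \tau \in \mathbb{R}.
\end{equation*}
For $\tau < 0$ (so $-\tau$ lies in the spectrum $[0,\infty)$ of $-\Delta$) this is exactly \eqref{di2} of Proposition \ref{wlap}, while for $\tau \ge 0$ it follows from the hypothesis \eqref{ure0} by passing to the limit from $\mathbb{C}\setminus\mathbb{R}$ at regular points of the resolvent.

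The main obstacle is \eqref{inho8}: the matching $L^2_t$ exponents on both sides rule out a Christ--Kiselev descent from Proposition \ref{istr}, forcing one to engage the resolvent picture directly through limiting absorption. An essentially equivalent alternative would be to invoke the retarded-Duhamel form of Kato $H$-smoothing in the spirit of Lemma \ref{lem4.4}, which yields the same conclusion via the same underlying resolvent bound, but phrased operator-theoretically rather than Fourier-analytically.
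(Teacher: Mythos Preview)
Your treatment of \eqref{homo8} and \eqref{dual8} coincides with the paper's: both are simply Lemma~\ref{str0} and its dual, already established in Section~\ref{sec4}.

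For \eqref{inho8} your argument is correct and rests on the same mechanism as the paper's---Plancherel in $t$ converts the retarded Duhamel operator into the boundary resolvent $R_0(-\tau\pm i0)$, and the uniform hypothesis on $R_0(z)$ closes the estimate---but the decomposition you use to reach that point is different. The paper invokes the Ruiz--Vega identity \eqref{dec}, writing $\int_0^t e^{i(t-s)\Delta}F\,ds$ as the sum of a term $v$ with space--time Fourier transform $\widehat{F}(\xi,\tau)/(-|\xi|^2-\tau+i\varepsilon)$ and a non-retarded term $e^{it\Delta}\!\int_{-\infty}^\infty e^{-is\Delta}[\mathrm{sgn}(s)F]\,ds$; the first piece is bounded by the resolvent estimate exactly as in \eqref{multi2}, and the second by composing \eqref{homo8} with \eqref{dual8}. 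You instead split $F=F_++F_-$ by the sign of $s$, so that $\int_0^t$ collapses to $\int_{-\infty}^t$ on $F_+$ and the second piece never appears; $F_-$ is then handled by the time-reversal symmetry $\overline{e^{it\Delta}g}=e^{-it\Delta}\bar g$. Your route is marginally more self-contained, since \eqref{inho8} is obtained from the resolvent bound alone without recourse to \eqref{homo8}--\eqref{dual8}; the paper's route avoids the time-reversal step and keeps $F$ unsplit. Both the paper and you note that the retarded form of Kato $H$-smoothing furnishes an equivalent operator-theoretic alternative.
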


\begin{rem}
Given a class $\mathcal{R}$ of resolvent type, this proposition clearly holds for $V\in\mathcal{R}$ with $C(V)\sim[V]_{\mathcal{R}}$.
In the cases of $\mathcal{R}=\mathcal{L}^{2,p},\,p>(n-1)/2$, and $\mathcal{S}_3$,
the above Strichartz estimates can be found in \cite{RV2} and \cite{BBRV}, respectively.
From the resolvent estimates in Theorem \ref{thm2.3}, these previous results are extended to the class $\mathcal{S}_n$.
\end{rem}

\begin{proof}[Proof of Proposition \ref{str}]
We have already proved the homogeneous estimate \eqref{homo8} in Section \ref{sec4} in two ways,
and by duality it is equivalent to \eqref{dual8}.
For the inhomogeneous estimate \eqref{inho8}, following the simple argument used for Proposition 2.5 in \cite{RV},
we may write
\begin{equation}\label{dec}
\int_0^t e^{i(t-s)\Delta}F(\cdot,s)ds=v(x,t)-e^{it\Delta}\bigg(\int_{-\infty}^\infty e^{-is\Delta}[sgn(s)F(\cdot,s)]ds\bigg)(x),
\end{equation}
where
\begin{equation*}
v(x,t)=\lim_{\varepsilon\rightarrow0}
\mathcal{F}^{-1}\bigg(\frac{\widehat{F}(\xi,\tau)}{-|\xi|^2-\tau+i\varepsilon}\bigg)(x,t).
\end{equation*}
Combining \eqref{homo8} and \eqref{dual8},
we get the desired estimate for the second term in the right-hand side of \eqref{dec}.
To bound the first term, all we have to do is just to use the resolvent estimate in the same way as in \eqref{multi2}.
Now \eqref{inho8} is proved. But here, we point out that it can be also obtained more directly
appealing to Kato $H$-smoothing theory as in the homogeneous case.
In fact the following\,\footnote{\,For the proof, see, for example, \cite{M}.} is due to Kato \cite{K}:
If \eqref{ksm} in Lemma \ref{lem4.4} holds, then
 $$\int_{\mathbb{R}}\bigg\|\int_0^tTe^{i(t-s)H}T^\ast F(\cdot,s)ds\bigg\|_{\mathcal{H}}^2dt\leq \widetilde{C}^2\int_{\mathbb{R}}\|F(\cdot,t)\|_{\mathcal{H}}^2dt.$$
By applying this, with $H=-\Delta$, $\mathcal{H}=L^2$ and $T:f\mapsto|V|^{1/2}f$ as before,
we can get alternatively \eqref{inho8}.
\end{proof}

Making use of Proposition \ref{str}, we now obtain the global well-posedness of the following Cauchy problem for the Schr\"odinger equation:
\begin{equation}\label{ini}
\left\{
\begin{array}{ll}
i\partial_tu+\Delta u-V(x)u=F(x,t),\\
u(x,0)=u_0(x).
\end{array}\right.
\end{equation}

\begin{thm}\label{well}
Let $n\geq2$.
Given a class $\mathcal{R}$ of resolvent type, we assume that $V\in\mathcal{R}$ and $[V]_{\mathcal{R}}$ is sufficiently small.
If $u_0\in L^2$ and $F\in L_{t,x}^2(|V|^{-1})$, then there exists a unique solution of \eqref{ini} in the weighted space $L_{t,x}^2(|V|)$.
Furthermore, the solution $u$ belongs to $C_tL_x^2$, and satisfies
\begin{equation}\label{1-1}
\|u\|_{L^2(|V|)}\leq C[V]_{\mathcal{R}}^{1/2}\|u_0\|_{L^2}+C[V]_{\mathcal{R}}\|F\|_{L_{t,x}^2(|V|^{-1})}
\end{equation}
and
\begin{equation}\label{1-2}
\sup_{t\in\mathbb{R}}\|u\|_{L_x^2}\leq C\|u_0\|_{L^2}+C[V]_{\mathcal{R}}^{1/2}\|F\|_{L_{t,x}^2(|V|^{-1})}.
\end{equation}
\end{thm}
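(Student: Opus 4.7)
The plan is to run a standard Duhamel-plus-contraction argument in which the potential term $Vu$ is treated as a forcing. Rewriting \eqref{ini} via Duhamel's formula, a solution is a fixed point of
$$\Phi(u)(x,t) = e^{it\Delta}u_0(x) - i\int_0^t e^{i(t-s)\Delta}\bigl(Vu + F\bigr)(\cdot,s)\,ds,$$
and the natural space on which to iterate is $X = L^2_{t,x}(|V|)$. The key algebraic observation is the isometry $\|Vw\|_{L^2_{t,x}(|V|^{-1})} = \|w\|_{L^2_{t,x}(|V|)}$, which guarantees that $Vu + F$ lies in $L^2_{t,x}(|V|^{-1})$ whenever $u \in X$ and $F \in L^2_{t,x}(|V|^{-1})$, so that the Strichartz estimates from Proposition \ref{str} become available.

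First I would combine \eqref{homo8} and \eqref{inho8} to obtain
$$\|\Phi(u)\|_{L^2(|V|)} \leq C[V]_{\mathcal{R}}^{1/2}\|u_0\|_2 + C[V]_{\mathcal{R}}\|F\|_{L^2(|V|^{-1})} + C[V]_{\mathcal{R}}\|u\|_{L^2(|V|)},$$
together with the analogous difference bound $\|\Phi(u) - \Phi(v)\|_{L^2(|V|)} \leq C[V]_{\mathcal{R}}\|u - v\|_{L^2(|V|)}$. If $[V]_{\mathcal{R}}$ is small enough that $C[V]_{\mathcal{R}} < 1/2$, then $\Phi$ is a contraction on $X$, and the Banach fixed point theorem produces a unique $u \in X$ with $u = \Phi(u)$, giving existence and uniqueness in $L^2_{t,x}(|V|)$. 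Solving for $\|u\|_{L^2(|V|)}$ in the first inequality applied to the fixed point itself yields \eqref{1-1}.

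Next, to obtain \eqref{1-2} and the $C_t L^2_x$ regularity, I would exploit the unitarity of $e^{it\Delta}$ on $L^2$ together with the dual Strichartz estimate \eqref{dual8}. For each fixed $t$, applying \eqref{dual8} to $\chi_{[0,t]}(s)(Vu + F)(\cdot,s)$ and using $\|e^{it\Delta}g\|_{L^2} = \|g\|_{L^2}$ gives
$$\|u(\cdot,t)\|_{L^2_x} \leq \|u_0\|_{L^2} + C[V]_{\mathcal{R}}^{1/2}\bigl(\|u\|_{L^2(|V|)} + \|F\|_{L^2(|V|^{-1})}\bigr),$$
and substituting the bound \eqref{1-1} for $\|u\|_{L^2(|V|)}$ yields \eqref{1-2}. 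Continuity of $t \mapsto u(\cdot,t)$ in $L^2_x$ follows from strong continuity of the free Schr\"odinger group for the homogeneous term, and from applying \eqref{dual8} to $\chi_{[t_1,t_2]}(s)(Vu + F)(\cdot,s)$ for the Duhamel term; the resulting bound $C[V]_{\mathcal{R}}^{1/2}\|\chi_{[t_1,t_2]}(Vu + F)\|_{L^2(|V|^{-1})}$ tends to zero as $t_2 \to t_1$ by dominated convergence.

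The contraction itself is routine; the main subtlety will be the bookkeeping needed to recover the sharp power $[V]_{\mathcal{R}}^{1/2}$ in front of $\|F\|_{L^2(|V|^{-1})}$ in \eqref{1-2}. Naively composing \eqref{inho8} with $\|\cdot\|_{L^\infty_t L^2_x}$ would cost an extra factor and only produce $[V]_{\mathcal{R}}$; obtaining the correct exponent $1/2$ forces one to invoke the dual Strichartz estimate \eqref{dual8} directly on the Duhamel integral, which is exactly what unitarity of $e^{it\Delta}$ on $L^2$ makes possible.
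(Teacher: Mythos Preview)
Your proposal is correct and follows essentially the same approach as the paper: Duhamel formulation, contraction in $L^2_{t,x}(|V|)$ via the inhomogeneous Strichartz estimate \eqref{inho8} together with the identity $\|Vu\|_{L^2(|V|^{-1})}=\|u\|_{L^2(|V|)}$, and then \eqref{dual8} plus unitarity of $e^{it\Delta}$ for the $L^\infty_tL^2_x$ bound. The only cosmetic difference is that the paper writes the contraction as invertibility of $I-\Phi$ (with $\Phi$ denoting only the $Vu$ Duhamel term) rather than as a Banach fixed point for the full map; your version also spells out the $C_tL^2_x$ continuity, which the paper leaves as an elementary check.
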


\begin{proof}
The proof is quite standard once one has the weighted $L^2$ Strichartz estimates
in Proposition \ref{str}.

Let us first consider the following integral formulation of \eqref{ini}:
\begin{equation}\label{sol}
u(x,t)=e^{it\Delta}u_0(x)-i\int_0^te^{i(t-s)\Delta}F(\cdot,s)ds+\Phi(u)(x,t),
\end{equation}
where
$$\Phi(u)(x,t)=-i\int_0^t e^{i(t-s)\Delta}(Vu)(\cdot,s)ds.$$
Then, since $u_0\in L^2$ and $F\in L^2(|V|^{-1})$,
from Proposition \ref{str} and
$$(I-\Phi)(u)=e^{it\Delta}u_0(x)-i\int_0^t e^{i(t-s)\Delta}F(\cdot,s)ds,$$
where $I$ is the identity operator, it follows that
$$(I-\Phi)(u)\in L^2(|V|).$$
Hence, the global existence in the theorem follows if
the operator $I-\Phi$ has an inverse in the space $L^2(|V|)$.
It is well known that this holds if the operator norm of $\Phi$ in $L^2(|V|)$ is strictly less than $1$.
Namely, it is enough to show that
$$\|\Phi(u)\|_{L^2(|V|)}<\|u\|_{L^2(|V|)}.$$
But, using the inhomogeneous estimate \eqref{inho8}, we can get this as follows:
\begin{align}\label{inv}
\nonumber\|\Phi(u)\|_{L^2(|V|)}&\leq C[V]_{\mathcal{R}}\|Vu\|_{L^2(|V|^{-1})}\\
\nonumber&=C[V]_{\mathcal{R}}\|u\|_{L^2(|V|)}\\
&<\frac12\|u\|_{L^2(|V|)}.
\end{align}
Here, for the last inequality we used the smallness assumption on the quantity $[V]_{\mathcal{R}}$.

On the other hand, by applying Proposition \ref{str} and \eqref{inv} to \eqref{sol}, one can easily see that
\begin{align}\label{1123}
\nonumber\|u\|_{L^2(|V|)}&\leq C\big\|e^{it\Delta}u_0 \big\|_{L^2(|V|)}
+C\bigg\|\int_{0}^{t}e^{i(t-s)\Delta}F(\cdot,s)ds\bigg\|_{L^2(|V|)}\\
&\leq C[V]_{\mathcal{R}}^{1/2}\|u_0\|_{L^2}+C[V]_{\mathcal{R}}\|F\|_{L_{t,x}^2(|V|^{-1})}.
\end{align}
Hence \eqref{1-1} is proved.
To show \eqref{1-2}, we will use \eqref{1123} and the estimate \eqref{dual8}.
First, from \eqref{sol}, \eqref{dual8}, and the simple fact that $e^{it\Delta}$ is an isometry in $L^2$,
one can see that
$$\|u\|_{L_x^2}\leq C\|u_0\|_{L^2}
+C[V]_{\mathcal{R}}^{1/2}\|F\|_{L^2(|V|^{-1})}
+C[V]_{\mathcal{R}}^{1/2}\|Vu\|_{L^2(|V|^{-1})}.$$
Since $\|Vu\|_{L^2(|V|^{-1})}=\|u\|_{L^2(|V|)}$ and $[V]_{\mathcal{R}}$ is small,
by combining the above inequality and \eqref{1123} we get
$$\|u\|_{L_x^2}\leq C\|u_0\|_{L^2}+C[V]_{\mathcal{R}}^{1/2}\|F\|_{L_{t,x}^2(|V|^{-1})}$$
as desired.
Finally, it is an elementary matter to check that $u\in C_tL_x^2$.
Now the proof is completed.
\end{proof}

\section{Concluding remarks}\label{sec9}

The abstract Carleman estimate \eqref{Carl} in Proposition \ref{prop}
can be modified to work for the case of time-dependent potentials $V(x,t)$ such that
\begin{equation}\label{w}
\sup_{t\in\mathbb{R}}|V(x,t)|\leq W(x)\in\mathcal{R}.
\end{equation}
Taking the sup in $t$ has been sometimes used for time-dependent potentials
in other problems (\cite{RV,RV2,BRV,BBCRV,BBRV}) concerning Schr\"odinger equations.
Since $|V(x,t)|\leq W(x)$ for almost every $t$,
the following corollary is an immediate consequence of Proposition \ref{prop}.

\begin{cor}
Let $n\geq2$ and let $V:\mathbb{R}^{n+1}\rightarrow\mathbb{C}$ be such that \eqref{w} holds.
Then we have for $u\in C_0^\infty(\mathbb{R}^{n+1})$
\begin{equation*}
\big\|e^{\beta\langle(x,t),\nu\rangle}u\big\|_{L_{t,x}^2(|V(x,t)|)}\leq
C[W]\big\|e^{\beta\langle(x,t),\nu\rangle}(i\partial_t+\Delta)u\big\|_{L_{t,x}^2(|V(x,t)|^{-1})}
\end{equation*}
with a constant $C$ independent of $\beta\in\mathbb{R}$ and $\nu\in\mathbb{R}^{n+1}$,
if $W\in A_2(\nu^\prime)$.
\end{cor}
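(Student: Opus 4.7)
The proof reduces to a direct application of Proposition \ref{prop} to the time-independent majorant $W(x)$, combined with pointwise monotonicity of weighted $L^2$ norms. Here is the plan.

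First, I would note that $W$ is a nonnegative measurable function with $W \in \mathcal{R}$ and $W \in A_2(\nu^\prime)$, so Proposition \ref{prop} applies directly to $W$ (viewed as a time-independent potential depending only on $x$). This yields
\begin{equation*}
\big\|e^{\beta\langle(x,t),\nu\rangle}u\big\|_{L_{t,x}^2(W(x))} \leq C[W]\,\big\|e^{\beta\langle(x,t),\nu\rangle}(i\partial_t+\Delta)u\big\|_{L_{t,x}^2(W(x)^{-1})}
\end{equation*}
for all $u \in C_0^\infty(\mathbb{R}^{n+1})$, with $C$ independent of $\beta$ and $\nu$.

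Next, I would sandwich the desired inequality between two pointwise comparisons. Since $|V(x,t)| \leq W(x)$ for almost every $(x,t)$ by hypothesis \eqref{w}, raising the weight on the left gives
\begin{equation*}
\big\|e^{\beta\langle(x,t),\nu\rangle}u\big\|_{L_{t,x}^2(|V(x,t)|)}^2 = \iint |u|^2 e^{2\beta\langle(x,t),\nu\rangle}|V(x,t)|\,dxdt \leq \iint |u|^2 e^{2\beta\langle(x,t),\nu\rangle}W(x)\,dxdt.
\end{equation*}
Dually, $|V(x,t)|^{-1} \geq W(x)^{-1}$ almost everywhere, so
\begin{equation*}
\big\|e^{\beta\langle(x,t),\nu\rangle}(i\partial_t+\Delta)u\big\|_{L_{t,x}^2(W(x)^{-1})} \leq \big\|e^{\beta\langle(x,t),\nu\rangle}(i\partial_t+\Delta)u\big\|_{L_{t,x}^2(|V(x,t)|^{-1})}.
\end{equation*}
Chaining these two comparisons through the estimate supplied by Proposition \ref{prop} yields precisely the claimed inequality with the constant $C[W]$.

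There is essentially no obstacle here: the content of the corollary is that the weighted Carleman estimate is monotone in the weight in the natural way, and the $A_2$ hypothesis on $\nu^\prime$ is transferred from $W$ (where Proposition \ref{prop} needs it) rather than imposed on $V$ itself. The only mild point worth making explicit is that $W$ being a legitimate $A_2(\nu^\prime)$ weight (hence $W^{-1} \in A_2(\nu^\prime)$ as noted after \eqref{ap}) is what justifies invoking Proposition \ref{prop}; once that is in place, the argument is a two-line sandwich.
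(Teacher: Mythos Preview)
Your proposal is correct and follows exactly the paper's own reasoning: the paper states that the corollary ``is an immediate consequence of Proposition~\ref{prop}'' since $|V(x,t)|\leq W(x)$ for almost every $t$, which is precisely the sandwich argument you describe. There is nothing to add.
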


This is naturally expected to lead to unique continuation for the time-dependent potential case
\begin{equation}\label{tidep}
|(i\partial_t+\Delta)u(x,t)|\leq|V(x,t)u(x,t)|
\end{equation}
where $V$ satisfies \eqref{w}.
In fact the same type of argument used for time-independent potentials
works clearly for this case if the assumptions on the potential in Theorems \ref{thm2} and \ref{thm6} are given for $W(x)$ instead of $V(x)$.
As a consequence, one can obtain some new results on the unique continuation for \eqref{tidep}.
We omit the details.

It is also straightforward that the well-posedness result in the previous section can be applied to the case of time-dependent potentials $V(x,t)$
satisfying \eqref{w}.
This case has been studied in \cite{RV2} and \cite{BBRV}
for $\mathcal{R}=\mathcal{L}^{2,p},\,p>(n-1)/2$, and $\mathcal{S}_3$, respectively.
These previous results are now extended to the class $\mathcal{S}_n$
as an immediate consequence of our resolvent estimates.

%%%%%%%%%%%%%%%%%%%%%%%%%%%%%%%%%%%%%%%%%%%%%%%%%%%%%%%%%%%%%%%%%%%%%%%%%%%%%%%%%%%%%%%%%%%%%%%%%%%%%%%%%%%%%%%%%%%%%%%%%%%%%%%

\end{document}